\newtheorem{Theorem}{Theorem}[section]
\newtheorem{Lemma}{Lemma}[section]
\newtheorem{Proposition}[Theorem]{Proposition}
\theoremstyle{definition}
\newtheorem{Remark}{Remark}[section]
\newtheorem{Definition}{Definition}[section]
\newcommand{\cA}{\ensuremath{\mathcal A}}
\newcommand{\cB}{\ensuremath{\mathcal B}}
\newcommand{\cE}{\ensuremath{\mathcal E}}
\newcommand{\cF}{\ensuremath{\mathcal F}}
\newcommand{\cL}{\ensuremath{\mathcal L}}
\newcommand{\cN}{\ensuremath{\mathcal N}}
\newcommand{\cO}{\ensuremath{\mathcal O}}
\newcommand{\cS}{\ensuremath{\mathcal S}}
\newcommand{\bbE}{{\ensuremath{\mathbb E}} }
\newcommand{\bbN}{{\ensuremath{\mathbb N}} }
\newcommand{\bbP}{{\ensuremath{\mathbb P}} }
\newcommand{\bbR}{{\ensuremath{\mathbb R}} }
\newcommand{\bbT}{{\ensuremath{\mathbb T}} }
\newcommand{\bbZ}{{\ensuremath{\mathbb Z}} }
\let\a=\alpha \let\b=\beta   \let\d=\delta  \let\e=\varepsilon
 \let\g=\gamma       \let\l=\lambda
\let\m=\mu      \let\o=\omega      
  \let\s=\sigma \let\t=\tau   
  \let\z=\zeta
\let\D=\Delta
\newcommand{\dd}{\mathrm{d}}
\newcommand{\one}{\mathds{1}}
\newcommand{\modN}{\;(\mathrm{mod}\, N)}
\author[L.\ Levine]{Lionel Levine}
\address{Lionel Levine.
Department of Mathematics, Cornell University, Ithaca, NY 14853.
}
\email{levine@math.cornell.edu}
\thanks{LL was supported by \href{http://www.nsf.gov/awardsearch/showAward?AWD_ID=1455272}{NSF DMS-1455272} and a Sloan Fellowship.}
\author[V. Silvestri]{Vittoria Silvestri}
\address{Vittoria Silvestri. 
Statslab, Wilberforce road, Cambridge, CB3 0WA, United Kingdom.}
\email{V.Silvestri@maths.cam.ac.uk}
\title{How long does it take for Internal DLA to forget its initial profile?}
\begin{document}
\maketitle
\begin{center}
\today
\end{center}

\begin{abstract}
Internal DLA is a discrete model of a moving interface. On the cylinder graph $\bbZ_N \times \bbZ$, a particle starts uniformly on $\bbZ_N \times \{0\}$ and performs simple random walk on the cylinder until reaching an unoccupied site in $\bbZ_N \times \bbZ_{\geq 0}$, which it occupies forever. This operation defines a Markov chain on subsets of the cylinder.
We first show that a typical subset is rectangular with at most logarithmic fluctuations. We use this to prove that two Internal DLA chains started from different typical subsets can be coupled with high probability by adding order $N^2 \log N$ particles. For a lower bound, we show that at least order $N^2$ particles are required to forget which of two independent typical subsets the process started from.
\end{abstract}

\tableofcontents


\section{Introduction}
Internal Diffusion-Limited Aggregation (IDLA) models a random subset of $\bbZ^d$ that grows in proportion to the harmonic measure on its boundary seen from an internal point. 
More precisely, let $A(0)=\{ 0\}$ denote the subset (``cluster'') at time $t=0$. For integer times $t\geq 1$ inductively define
	\begin{equation} \label{eq:newparticle} A(t) = A(t-1) \cup \{ Z_t \} , \end{equation}
where $Z_t$ denotes the exit location from $A(t-1)$ of a simple random walk on $\bbZ^d$ starting from $0$, independent of the past. The process $(A(t))_{t\geq 0}$ is a Markov chain on the space of connected subsets of $\bbZ^d$. As $t\to \infty $, the asymptotic shape of $A(t)$ is an origin-centered Euclidean ball \cite{lawler1992internal} and its fluctuations from the ball are at most logarithmic in dimensions $d \geq 2$ \cite{asselah2013logarithmic,asselah2013sublogarithmic,jerison2012logarithmic,jerison2013internal}; see also \cite{asselah2014lower} for a lower bound on the fluctuations. 
Space-time averages of the fluctuations converge to a logarithmically correlated Gaussian field \cite{jerison2014internal,jerison2014internal2}. 

In the setting of the cylinder graph \cite{jerison2014internal2} it was asked what happens if the process is initiated with a cluster $A(0)$ other than a singleton: How long does it take for IDLA to forget the shape of $A(0)$? 
Our main results, Theorems \ref{th:ZNmain_teo} and \ref{th:ZNlower_bound} below, give upper and lower bounds that match up to a log factor.

Fix an integer $N\geq 3$, and let $\bbZ_N \times \bbZ$ denote the cylinder graph with the cycle on $N$ vertices as base graph. We refer to $x \in \bbZ_N$ as the horizontal coordinate, and to $y \in \bbZ$ as the vertical coordinate. For $k\in \bbZ$, we call $\{ y=k\} := \bbZ_N \times \{k\}$ the $k^{th}$ level of the cylinder, and $R_k := \{ (x,y) : y\leq k\}$ the infinite rectangle of height $k$.
 
 
It is sometimes convenient to formulate the growth in terms of aggregation of particles. Let $A(0)$ be the union of the lower half-cylinder $R_0$ with a finite (possibly empty) set of sites in the upper half-cylinder. At time zero, each site in $A(0)$ is occupied by a particle.
At each discrete time step, a new particle is released from a uniform point on level $0$, and performs a simple random walk until reaches an unoccupied site, which it occupies forever.
Motion of particles is instantaneous: we do not take into account how many random walk steps are required for a particle reach an unoccupied site, but rather increment the time by $1$ after it does so.
Formally, we define $A(t)$ inductively according to \eqref{eq:newparticle}, where $Z_t$ is the the exit location from $A(t-1)$ of a simple random walk on the cylinder graph starting from a uniform random site on level $0$, independent of the past. Note that this is equivalent to adding a new site to the cluster according to the harmonic measure on its exterior boundary seen from level $-\infty$. 
When $A(0) = R_0$ we say that the process is ``starting from flat''.

In the cylinder setting there are two parameters, the size $N$ of the base graph, and the time $t$. 
Just as large IDLA clusters on $\bbZ^d$ are logarithmically close to Euclidean balls, it is natural to expect large
 IDLA clusters on the cylinder to be logarithmically close to filled rectangles. 
 When $t = N^2$ this was stated in \cite{jerison2014internal2} (but not proved there, as the proof method is the same as in \cite{jerison2012logarithmic}).
Our first result extends this result to large times $t \leq N^m$, which we will later use to control the fluctuations of stationary clusters. 
\begin{Theorem} \label{pr:height_bound}
Let $(A(t))_{t\geq 0}$ be an IDLA process on $\bbZ_N \times \bbZ$ starting from the flat configuration $A(0)= R_0$. 
For any $\g >0$, $m\in \bbN$ there exists a constant  $b_{\g ,m}$, depending only on  $\g  ,m$, such that 
	\begin{equation} \label{eq:height_bound}
	 \bbP \Big(  R_{\frac{t}{N} - b_{\g ,m}  \log N }
	\subseteq  A(t) \subseteq
	R_{\frac{t}{N} + b_{\g ,m }  \log N  } \;\; \forall t\leq N^m \Big) \geq 1-N^{-\g}  
	\end{equation}
for $N$ large enough. 
\end{Theorem}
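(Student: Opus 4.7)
The strategy has two steps: (1) prove the double inclusion at a fixed integer time $t \leq N^m$ with failure probability at most $N^{-\gamma - m - 1}$; (2) union-bound over the $\leq N^m$ integer times to obtain \eqref{eq:height_bound} (controlling integer $t$ suffices since $A(t)$ changes only at integer times). The first step is the heart of the matter, and I would prove it by adapting the thick/thin tube method of Jerison, Levine and Sheffield \cite{jerison2012logarithmic} to the cylinder geometry, with the cylinder Green's function playing the role of the $\bbZ^d$ Green's function.

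\emph{Fixed-time bound.} For the outer inclusion, fix $z = (x_0, k)$ with $k \geq t/N + C \log N$, where $C = C(\gamma, m)$ is to be chosen. Continue each of the $t$ IDLA walks past its settling point as an independent random walk killed at some very large vertical distance; by the abelian property, $A(t)$ is a deterministic function of these extended walks. Let $M_z$ count the extended walks that visit $z$ and $L_z$ count those that settle in $A(t) \setminus \{z\}$ and later reach $z$, so that $\one\{z \in A(t)\} \leq M_z - L_z$. A linearity computation using the cylinder Green's function $g_N$ gives $\bbE[M_z - L_z] = O(1)$ and $\var(M_z - L_z) = O(\log N)$: the relevant point is that $g_N((x,y),(x',y')) \approx |y-y'|/N$ at large vertical separation, so the height gap $C \log N$ enters the drift with the factor $1/N$ that is responsible for the $t/N$ growth rate. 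Sub-Gaussian concentration as in \cite{jerison2012logarithmic} then produces the tail bound $\bbP(z \in A(t)) \leq N^{-(\gamma + 2m + 2)}$ once $C$ is large enough in terms of $\gamma, m$, and a union bound over the $O(N \cdot N^m)$ candidate sites (no particle can ascend above height $N^m$ when starting from flat with $t \leq N^m$) yields the outer inclusion at the required probability. The inner inclusion is symmetric, using walks killed upon exit from $R_k$ as in \cite{jerison2013internal}, and counting walks that must settle elsewhere below level $k$.

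\emph{Main obstacle.} The only genuinely new ingredient relative to \cite{jerison2012logarithmic,jerison2013internal} is the cylinder Green's function estimate, uniform in $N$ and in vertical distances $|y - y'| \leq 2N^m$. Decomposing simple random walk on $\bbZ_N \times \bbZ$ into independent horizontal and vertical components, one can show
\[
g_N\big((x,y),(x',y')\big) = \frac{|y-y'|}{N} + h_N(x-x', y-y'),
\]
where $h_N$ is controlled by the $\bbZ^2$ potential kernel at scales $\lesssim N$ and decays exponentially in $|y-y'|/N$ at larger scales, thanks to the $O(N^2)$ mixing time of the horizontal walk on $\bbZ_N$. Once this estimate is in place, the JLS argument transcribes essentially verbatim; the remaining work is careful bookkeeping of error terms and verifying that the drift/variance decomposition survives the cylinder geometry.
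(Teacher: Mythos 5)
Your proposal takes a fundamentally different route from the paper, and it has a genuine gap that the paper's two-step structure is specifically designed to avoid.

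\textbf{What the paper actually does.} The paper first establishes logarithmic fluctuations only for $T\leq (N\log N)^2$ (Theorem \ref{th:JLS}, proved in Appendix \ref{app:survey} by the JLS harmonic-martingale iteration), and then extends to $T\leq N^m$ by a \emph{block-stacking} argument: using the Abelian property, it grows the cluster by releasing $n = aN\log N + 2cN^2\log N$ particles at a time, freezes the fluctuation layer, flattens the filled part, and iterates. Frozen particles are re-released at the end and Proposition \ref{pr:coupling} shows the result is insensitive to their exact positions, since they sit $\gtrsim N\log N$ below the frontier. The entire point of this construction is that the delicate fluctuation analysis only ever needs to be run on a cluster of size $\cO(N^2\log N)$, at which scale the relevant height parameter is $\cO(N\log^2 N)$ and, after one bootstrap pass, $\cO(\sqrt{N}\log^{3/2}N)\ll N$.

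\textbf{The gap in your proposal.} You propose a single-shot fixed-time estimate for $t$ up to $N^m$, via $\one\{z\in A(t)\}\leq M_z-L_z$ and a linearity/variance computation with a uniform Green's function estimate. Two problems. First, the quantity $L_z$ (walks that settle in $A(t)\setminus\{z\}$ and then reach $z$) depends on the unknown random set $A(t)$ itself, so you cannot bound $\bbE[L_z]$ and $\var(L_z)$ by ``a linearity computation''; some bootstrap is required. In the JLS scheme this bootstrap is exactly the alternation between ``no $m$-early $\Rightarrow$ no $\ell$-late'' (Proposition \ref{pr:early_late}) and ``no $\ell$-late $\Rightarrow$ no $m'$-early'' (Proposition \ref{pr:late_early}), and the latter rests on the thin tentacle lemma, which is a planar argument requiring the scale $m$ to satisfy $m\ll N$. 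If you start the iteration at time $t=N^m$ with $m\geq 3$, the a priori bound gives an initial early-parameter $m_0\asymp t/N\asymp N^{m-1}\gg N$; balls of this radius wrap around the cylinder many times, the thin-tentacle geometry is lost, and the iteration cannot get off the ground. You cannot fix this by ``transcribing the JLS argument essentially verbatim'' with a better Green's function estimate — the obstruction is geometric, not analytic, and it is precisely what the paper's block decomposition is built to circumvent. Second, a smaller point: the cylinder walk is recurrent, so ``the cylinder Green's function $g_N$'' does not exist as stated; what you want is a potential kernel or a Green's function stopped at a high level, and the paper's appendix indeed works with the stopped harmonic function $H_\zeta$ rather than with walk-counting quantities $M_z,L_z$.

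\textbf{What you would need to add.} Either reproduce the paper's block-stacking reduction to scale $\cO(N^2\log N)$ before invoking the fluctuation estimate, or supply an independent replacement for the thin tentacle lemma valid at scales $\gg N$ on the cylinder (which would be a new result). The Green's function/potential kernel estimate you describe is reasonable and would be a useful ingredient for the $T\lesssim(N\log N)^2$ regime, but it does not by itself bridge to $T=N^m$.
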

%

We prove the above result in two steps. To start with, we argue that \eqref{eq:height_bound} holds for $T \leq (N \log N )^2$, which can be shown by adapting the Jerison, Levine and Sheffield \linebreak arguments~\cite{jerison2012logarithmic} to the cylinder setting (cf.\ Theorem \ref{th:JLS}). We then invoke the Abelian\linebreak property of IDLA (cf.\ Section \ref{sec:Abelian}) to build large clusters by piling up nearly rectangular blocks of $\cO ( N^2 \log N)$ particles each.


Suppose now that the IDLA process on $\bbZ_N \times \bbZ$ is not initiated from the flat configuration $R_0$, but rather from an arbitrary connected cluster $A(0) \supset R_0$. How long does it take for the process $A(t)$ to forget that it did not start from flat? 
Clearly, the answer to this question very much depends on $A(0)$. For example, it will take an arbitrarily large time to forget an arbitrarily tall initial profile. On the other hand, most profiles are unlikely for IDLA dynamics. This leads us to the following related question: \vspace{1mm}

\hspace{1.3cm} \emph{How long does it take for IDLA to forget a \underline{typical} initial profile?} \\
\vspace{1mm}
To define ``typical'' let 
	\[ \Omega := \{ A \subseteq \bbZ_N \times \bbZ \,:\, A = R_0 \cup F \text{ for some finite $F$} \} \]
denote the set of clusters which are completely filled up to level $0$. This is 
 the state space of IDLA. On $\Omega $ we introduce the following 
 shift procedure: each time the cluster is completely filled up to level $k>0$, we shift the cluster down by $k$. 
\begin{Definition}[Shifted IDLA]\label{def:shifted}
Let $\cS$ be the map from the space of IDLA configurations to itself defined as follows. For a given cluster $A$, let
	\[ k_A := \max \{ k\geq 0 : 
	R_k \subseteq A \}  \]
be the height of the maximal filled (infinite) rectangle contained in $A$.
The \emph{downshift} of $A$ is the cluster 
	\[ \cS (A) := \{ (x,y-k_A) : (x,y) \in A \}, \]
Note that $k_{\cS (A)} = 0$, so $\cS (\cS (A)) = \cS(A)$.   
Now from the IDLA chain  $(A(t))_{t\geq 0}$ we set
	\[ A^*(t) := \cS ( A(t)) \]
for all $t\geq 0$. We will refer to $(A^*(t))_{t\geq 0}$ as the shifted process associated to $(A(t))_{t\geq 0}$. 
\end{Definition}

The shifted process defines a new Markov chain on the same configuration space $\Omega$. While the original IDLA chain is transient, we will see that Shifted IDLA is a positive recurrent Markov chain (cf.\ Remark \ref{remark_pr}) and it thus has a stationary distribution, which we denote by $\mu_N$. 

\begin{figure}[!h] \label{fig:shift}
  \centering
    \includegraphics[width=0.6\textwidth]{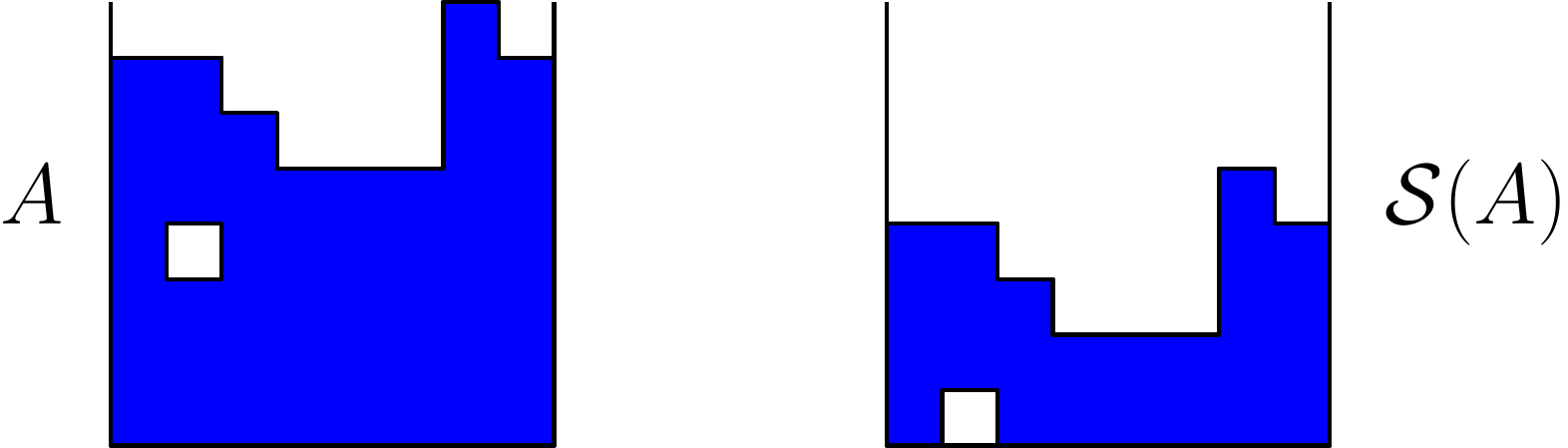}
    \caption{An IDLA cluster $A$ and its shifted version $\cS (A)$.}
\end{figure}

Recall from \cite{aldous1998microsurveys} that a probability distribution 
is called a \emph{warm start} for a given Markov chain if it is upper bounded by a constant factor times the  
stationary distribution of the chain.  
We relax this definition below. 
\begin{Definition}[Lukewarm start]
For $k\in \bbN$, a probability measure $\nu_N$ on $\Omega$ is said to be a \emph{$k$-lukewarm start} (for Shifted IDLA) if 
	\[ \nu_N(A) \leq N^k \mu_N(A)  \qquad \forall A \in \Omega . \]
We say that $\nu_N$ is a lukewarm start if it is a $k$-lukewarm start for some $k\in \bbN$.
\end{Definition}
\begin{Definition}[Typical clusters]
A random cluster $A \in \Omega$ is said to be a \emph{typical cluster} if it is distributed according to some lukewarm start $ \nu_N$ for Shifted IDLA. 
 If $\nu_N = \mu_N$ then $A$ is said to be a \emph{stationary cluster}. 
\end{Definition}
For $A \in \Omega$, let $|A|$ denote the number of occupied sites above level $0$, that is 
	\[ |A| := \sharp \{ (x,y) \in A : y>0 \}  . \]
Let further $h(A)$ denote the height of $A$, that is 
	\[ h(A) = \max\{ y : (x,y) \in A \mbox{ for some }x\in \bbZ_N\}, \]
so that $h(A) \geq 0$ for all $A \in \Omega$. 

Our next result is a bound for the height of a typical cluster: it is at most logarithmic in $N$ with high probability. 

\begin{Theorem}[Height of typical clusters]\label{pr:typical_height}
For any $\g >0$, $k\in \bbN$ there exists a constant $ c_{\g ,k} $, depending only on $\g $ and $k$, such that for all $k$-lukewarm starts $\nu_N$ it holds 
	\begin{equation} \label{numu}
	\nu_N \Big( \big\{ A : h(A) > c_{\g ,k}  \log N  \big\} \Big) 
	\leq  N^{-\g}  .
	\end{equation}
for $N$ large enough. 
\end{Theorem}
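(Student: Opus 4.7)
My plan is to first reduce the bound to the stationary measure $\mu_N$, and then to control $\mu_N$ by running Shifted IDLA from the flat configuration and applying Theorem~\ref{pr:height_bound}. Since $\nu_N(B) \leq N^k\mu_N(B)$ for every event $B$ by the $k$-lukewarm hypothesis, it is enough to find a constant $c = c_{\gamma,k}$ such that
\[
\mu_N\bigl(\{A : h(A) > c\log N\}\bigr) \leq N^{-(\gamma+k)}.
\]

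To bound $\mu_N$, I would consider the unshifted IDLA chain $(A(t))_{t\geq 0}$ started at $A(0)=R_0$ together with its shift $A^*(t) := \cS(A(t))$, which is Shifted IDLA from the flat configuration. Fix an integer $m$ (to be chosen) and apply Theorem~\ref{pr:height_bound} with parameters $(\gamma+k+1, m)$; set $c := 2\, b_{\gamma+k+1,m}$. On the resulting good event, of probability at least $1-N^{-(\gamma+k+1)}$, the cluster $A(t)$ is sandwiched between $R_{t/N-(c/2)\log N}$ and $R_{t/N+(c/2)\log N}$ for every $t\leq N^m$, hence $k_{A(t)} \geq t/N - (c/2)\log N$ and $h(A(t)) \leq t/N + (c/2)\log N$, and subtracting
\[
h(A^*(t)) \;=\; h(A(t)) - k_{A(t)} \;\leq\; c\log N \qquad \text{for all } t\leq N^m.
\]

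To transfer this trajectorial estimate to $\mu_N$, I would invoke positive recurrence of Shifted IDLA (Remark~\ref{remark_pr}) and the ergodic theorem, which identifies $\mu_N(\{h > c\log N\})$ with the almost-sure limit of the Cesaro averages of $\mathbf{1}_{\{h(A^*(t)) > c\log N\}}$, and, by bounded convergence, with the limit of their expectations. The trajectorial bound shows that these indicators vanish on $[0, N^m]$ with probability at least $1 - N^{-(\gamma+k+1)}$, so the Cesaro average up to $N^m$ has expectation at most $N^{-(\gamma+k+1)}$, which is the target estimate at this finite horizon. The main obstacle I anticipate is extending the bound to the Cesaro limit $T\to\infty$: since $c=c_m$ is allowed to grow with $m$, one cannot simply send $m$ to infinity in Theorem~\ref{pr:height_bound}. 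I would resolve this by iterating Theorem~\ref{pr:height_bound} on consecutive windows of $N^m$ particles, using the abelian property from Section~\ref{sec:Abelian} at each window transition to compare the evolved cluster (which, on the good event, is already a near-rectangle) with a fresh copy of flat-start IDLA; this propagates the trajectorial bound across all windows. Sending $T\to\infty$ in the Cesaro identity then yields the required estimate on $\mu_N$, and hence on $\nu_N$, with $c_{\gamma,k}:=c_m$ for a suitable fixed $m=m(\gamma,k)$.
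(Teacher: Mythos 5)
Your opening reduction — replacing $\nu_N$ by $\mu_N$ at the cost of an extra factor $N^k$ — is correct and is exactly how the paper begins Section~\ref{sec:shallow2}. Using Theorem~\ref{pr:height_bound} to deduce $h(A^*(t)) \leq 2b_{\gamma+k+1,m}\log N$ for all $t\leq N^m$ with high probability is also fine. The gap is in the step you flag yourself: converting the finite-horizon Cesaro estimate into a bound on $\mu_N$.

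The ergodic theorem identifies $\mu_N(\{h>c\log N\})$ with the limit as $T\to\infty$ of $\frac{1}{T}\sum_{t=1}^{T}\bbP(h(A^*(t))>c\log N)$, but provides no rate, so evaluating this average at $T=N^m$ gives no information about the limit. Your proposed fix — iterate Theorem~\ref{pr:height_bound} over windows of length $N^m$, using the abelian property to compare the evolved cluster to a fresh flat start — does not close the gap, for two reasons. First, the abelian property only reorders particle additions; comparing a near-rectangle to a rectangle genuinely requires the water coupling machinery (Proposition~\ref{pr:coupling} / Theorem~\ref{th:main2}), not just abelianness. Second, and more seriously, even with the coupling the sandwich width compounds: if $R_{t/N-c_1\log N}\subseteq A(t)\subseteq R_{t/N+c_1\log N}$, monotonicity and Theorem~\ref{pr:height_bound} give $R_{(t+s)/N-(c_1+b)\log N}\subseteq A(t+s)\subseteq R_{(t+s)/N+(c_1+b)\log N}$, so the fluctuation bound — and hence the bound on $h(A^*(\cdot))$ — grows by a fixed amount $b\log N$ per window. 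After $j\sim c/b$ windows the bound $h(A^*(t))\leq c\log N$ is no longer implied, and $\bbP(h(A^*(t))>c\log N)$ for $t$ in late windows is uncontrolled; the Cesaro average does not stay small as $T\to\infty$. Tracking probabilities of failure rather than deterministic sandwich widths does not help either, since those also accumulate additively over windows.

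What is missing is a mechanism that prevents the height of $A^*(t)$ from drifting upward over long time scales. The paper supplies this via the excess-height analysis of Section~\ref{sec:decayE}: Lemma~\ref{le:towards_height} shows $\cE(A(t)) = h(A(t)) - |A(t)|/N$ has a strictly negative drift whenever it exceeds a threshold $\cE^*$, which leads to exponential tails for the return time (Lemma~\ref{le:excess_height}). Crucially, the paper uses the ergodic theorem (equation~\eqref{ergodic}) not through a naive finite-horizon estimate but through an excursion decomposition whose per-excursion contribution is controlled by that drift, giving $\mu_N(\{\cE>2\cE^*\})\leq e^{-N/2}$ (Proposition~\ref{pr:initial_excess}) and then $\mu_N(\{h>N^8\})\leq 2e^{-N/2}$ (Proposition~\ref{pr:poly_height}). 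Only after this polynomial-height bound is in hand can the water coupling be applied once (Section~\ref{sec:shallow2}) to match a stationary cluster with a flat-start cluster and import the $\log N$ fluctuation bound from Theorem~\ref{pr:height_bound}. You need some analogue of this a priori control on the stationary tail of $h(A^*)$; without it, the transfer from the trajectorial estimate to $\mu_N$ cannot be completed.
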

In particular, taking $k=0$ we see that stationary IDLA clusters have logarithmic height with high probability. This gives nontrivial information on the stationary measure of a nonreversible Markov chain on an infinite state space. 

\begin{Remark}
We believe that this bound is tight, in the sense that for $A_N \sim \mu_N$ the ratio $h(A_N )/\log N$ converges in probability to a positive constant as $N \to \infty$. See \cite[Figure~9]{friedrich2013fast} for some numerical evidence.
\end{Remark}

We use Theorem \ref{pr:typical_height} to bound from above the time it takes for IDLA to forget a typical initial profile. 
\begin{Theorem}[The upper bound] \label{th:ZNmain_teo}
For any $\g >0$, $k\in \bbN$ there exist a constant $d_{\g , k}$ and a set $\Omega_{\g ,k} \subseteq \Omega$, depending only on $\g$ and $k$, such that the following hold for all sufficiently large $N$. 
For any $k$-lukewarm start $\nu_N$ on $\bbZ_N \times \bbZ$, we have	
	\[ \nu_N (\Omega_{\g , k} ) \geq 1-N^{-\g}. \]
Moreover, for any $A_0, A'_0 \in \Omega_{\g ,k}$ with $|A_0| = |A'_0|$, writing $t_{\g ,k} = d_{\g , k} N^2 \log N$ for brevity, we have	
\[  \| P(t_{\g ,k} ) - P'(t_{\g , k} ) \|_{TV} \leq N^{-\g} \]
where $(A(t))_{t\geq 0}$ and $(A'(t))_{t\geq 0}$ are IDLA processes starting from $A_0$ and $A'_0$ respectively, and $P(t)$ and $P'(t)$ denote the laws of $A(t)$ and $A'(t)$ respectively.
\end{Theorem}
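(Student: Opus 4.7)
The plan is to use Theorem \ref{pr:typical_height} to define $\Omega_{\gamma,k}$, then couple any two IDLA processes starting from equal-mass clusters in $\Omega_{\gamma,k}$ via common random walks, and argue that they merge by time $t_{\gamma,k}$ with high probability by combining the monotonicity of IDLA, Theorem \ref{pr:height_bound}, and the Abelian property.

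Set $\Omega_{\gamma,k} := \{A \in \Omega : h(A) \leq c_{\gamma+1,k}\log N\}$ with $c_{\gamma+1,k}$ the constant from Theorem \ref{pr:typical_height} applied with parameter $\gamma+1$: then $\nu_N(\Omega_{\gamma,k}) \geq 1 - N^{-(\gamma+1)} \geq 1 - N^{-\gamma}$ for $N$ large. Fix $A_0, A_0' \in \Omega_{\gamma,k}$ with $|A_0| = |A_0'| = m$, so that $A_0, A_0' \subseteq R_H$ for $H := c_{\gamma+1,k}\log N$. Build a coupling of $A(\cdot), A'(\cdot)$ and two auxiliary IDLA processes $B^-(\cdot), B^+(\cdot)$ started from $R_0$ and $R_H$ respectively, all driven by a single i.i.d.\ sequence of random walks launched from uniform points on level $0$. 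The standard IDLA monotonicity under common randomness (if $C_0 \subseteq D_0$ then $C(t) \subseteq D(t)$ for all $t$) gives $B^-(t) \subseteq A(t), A'(t) \subseteq B^+(t)$ for all $t\geq 0$. At $T := t_{\gamma,k} = d_{\gamma,k} N^2 \log N \leq N^3$, applying Theorem \ref{pr:height_bound} (with its $m$-parameter set to $3$) to $B^-$ and, via vertical translation, to $B^+$ yields, for a constant $b$ depending only on $\gamma$ and $k$, on an event of probability at least $1 - N^{-(\gamma+1)}$,
\[
R_{T/N - b\log N} \subseteq B^-(T) \subseteq A(T), A'(T) \subseteq B^+(T) \subseteq R_{T/N + H + b\log N}.
\]
Hence $A(T)$ and $A'(T)$ coincide on the bulk $R_{T/N - b\log N}$ and can differ only within a slab of height $O(\log N)$.

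The remaining step, which is the heart of the argument, is to promote this sandwich into exact equality $A(T) = A'(T)$ with high probability. The idea is to compare each of $A(T)$ and $A'(T)$ to the single reference cluster $\widetilde B := B^-(T + m)$ (flat-start IDLA after $T + m$ particles), which by construction has the same above-level-$0$ mass $T + m$. Tracking the excess $D(t) := A(t) \setminus B^-(t)$ (of constant mass $m$ under the monotone coupling), one observes that $D(t)$ evolves by a ``replacement'' dynamics: each time the $A$- and $B^-$-walks of a particle disagree, one site is removed from $D$ (where the $B^-$-walk stops strictly before the $A$-walk) and one site is added (where the $A$-walk eventually stops). The sandwich of the previous paragraph then forces $D(T)$ to lie within the interface slab of $B^-(T)$, of vertical width $O(\log N)$. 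The Abelian property of IDLA allows reordering the last $m$ particles of $\widetilde B$ so that their landing sites can be coupled to $D(T)$ on a high-probability event, yielding $A(T) = \widetilde B$, and symmetrically $A'(T) = \widetilde B$. A union bound together with the coupling inequality $\|P(T) - P'(T)\|_{TV} \leq P(A(T) \neq A'(T))$ then yields the claim.

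The main obstacle is the Abelian-property matching step. The Abelian property by itself only gives freedom in the order in which particles are released; to deduce that the $m$ extra particles of the reference process $\widetilde B$ can be coupled to land precisely on $D(T)$, one needs delicate control of the conditional distribution of the interface of IDLA at time $T$, building on the fluctuation estimates of Jerison--Levine--Sheffield that underlie Theorem \ref{pr:height_bound}. This is where the logarithmic factor in $t_{\gamma,k} = d_{\gamma,k} N^2 \log N$ enters: the slab in which $D(T)$ must be matched has height $O(\log N)$, and matching requires order $N^2 \log N$ additional particles to overwhelm the initial logarithmic discrepancy.
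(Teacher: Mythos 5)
Your definition of $\Omega_{\gamma,k}$ via Theorem \ref{pr:typical_height} is exactly right, but the coupling argument that follows has a genuine gap and also misses the fact that the paper has already packaged the needed coupling result as Theorem \ref{th:main2}. The paper's proof of the present theorem is essentially a two-line reduction: since every $A_0 \in \Omega_{\gamma,k}$ satisfies $h(A_0) = O(\log N) \leq N^2$, one may directly invoke Theorem \ref{th:main2} (with, say, $m = 2$), which produces a coupling of $A(t_{\gamma,k})$ and $A'(t_{\gamma,k})$ that fails with probability at most $N^{-\gamma}$ once $t_{\gamma,k} = h_0 N + d'_{\gamma,2} N^2 \log N \leq (c_{\gamma,k} + d'_{\gamma,2}) N^2 \log N$. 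What you propose instead is to re-derive that coupling from scratch via monotonicity under common randomness, and that route does not close.

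The concrete problem is in your ``promotion'' step. The monotone coupling gives a sandwich $B^-(T) \subseteq A(T), A'(T) \subseteq B^+(T)$ and Theorem \ref{pr:height_bound} forces the symmetric difference $A(T)\,\Delta\, A'(T)$ into a slab of height $O(\log N)$, but this is where the common-randomness coupling runs out of power: $A(T)$ and $A'(T)$ remain incomparable subsets of that slab, and nothing in the construction makes them coincide. Your attempt to fix this by tracking $D(t) := A(t)\setminus B^-(t)$ and invoking the Abelian property is not a proof of anything---the Abelian property lets you permute particle releases within a single realization of the process, but it does not let you redirect where a given particle settles so as to hit a prescribed target set $D(T)$. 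You acknowledge this yourself (``one needs delicate control of the conditional distribution of the interface at time $T$''); that is precisely the missing ingredient, and it is not supplied.

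The paper's resolution (inside the proof of Theorem \ref{th:main2}) sidesteps this entirely with the water-level coupling: it uses the Abelian property only to reorder releases so that the $t_{\gamma,m}$ fresh particles are absorbed first, producing a water cluster $W_0$ filled to high level; it then treats the sites of $A_0$, $A_0'$ as frozen particles released at the end, in pairs, under the reflection coupling of Proposition \ref{pr:coupling}. Because the pairs start deep inside the identical filled region $W_0$, they merge with high probability before exiting, forcing $W(n_0) = W'(n_0)$ exactly. This is a coupling of the \emph{final} configurations, not a monotone coupling of the trajectories, and it is what actually makes the two laws collide. To repair your proposal you would need to replace the monotone/sandwich step with this water-level construction---or, more economically, simply cite Theorem \ref{th:main2}.
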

Thus IDLA forgets a typical initial state, and hence in particular a stationary initial state, in $\cO (N^2 \log N)$ steps with high probability.

\begin{Remark}
The assumption $|A_0|=|A'_0|$ could be relaxed to $|A_0| \equiv |A'_0| \modN $, by shifting the smaller cluster to include some full rows of $N$ sites each.
To remove the cardinality assumption entirely, one could consider ``lazy'' IDLA processes: at each discrete time step, a particle is added with probability $1/2$, and otherwise nothing happens.  Then the difference $|A_t| - |A'_t| \modN$ is a lazy simple random walk on an $N$-cycle, so with probability at least $1-N^{-\g}$ there is a time $T \leq d_\g N^2 \log N$ such that $|A_T| \equiv |A'_T| \modN $.
\end{Remark}

When the initial profiles are stationary, we can complement the upper bound in Theorem \ref{th:ZNmain_teo} with the following lower bound.

\begin{Theorem}[The lower bound] \label{th:ZNlower_bound}
For any $\d , \e >0$ there exist disjoint subsets $\Omega_\d ,\Omega '_\d$ of~$\Omega$ such that 
	\begin{equation}\label{omegas}
    \mu_N ( \Omega_\d ) \geq \frac{1}{2} - \d  \, , \qquad 
    \mu_N ( \Omega '_\d ) \geq \frac{1}{2} - \d
	\end{equation}
and a constant $\a = \a (\d , \e )>0$ such that the following holds. 
Let $(A(t))_{t\geq 0}$ and $(A'(t))_{t\geq 0}$ be two IDLA processes on $\bbZ_N \times \bbZ$ starting from $A_0 \in \Omega_\d$, $A'_0 \in \Omega '_\d $, and denote by  $P(t) , P'(t)$ the laws of  $A(t)$, $A'(t)$ respectively. Then 
	\begin{equation} \label{TVlow}
	 \| P(\a N^2 ) - P'(\a N^2 ) \|_{TV} > 1- \e
	 \end{equation}
for $N$ large enough. 
\end{Theorem}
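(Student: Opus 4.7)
The lower bound follows from an exact conservation law of the IDLA dynamics, which avoids any delicate shape or fluctuation estimate. Since $A(t) \supseteq R_0$ at all times, each added particle must lie above level~$0$, so $|A(t)| = |A_0| + t$ exactly for every $t \geq 0$. Reformulated for the shifted chain of Definition~\ref{def:shifted}: one step of $A^*$ first adds a particle (so $|A^*|$ grows by $1$) and then shifts down by some $k \geq 0$ (so $|A^*|$ decreases by $kN$), giving a net $+1$ on $|A^*| \bmod N$ per step. Hence the shifted chain is periodic of period $N$ in this coordinate, with cyclic classes $C_r := \{A \in \Omega : |A| \equiv r \pmod N\}$ for $r \in \{0, 1, \dots, N-1\}$.

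Interpreting $\mu_N$ as the time-averaged (Ces\`aro) stationary distribution over the $N$ cyclic classes --- the natural convention when a stationary distribution is referred to without specifying a phase --- one has $\mu_N(C_r) = 1/N$ for every $r$, so $|A| \bmod N$ is uniform under $\mu_N$. I would then partition $\{0, 1, \dots, N-1\} = S \sqcup S'$ with $|S| = \lfloor N/2 \rfloor$ and $|S'| = \lceil N/2 \rceil$, and set
\[
\Omega_\delta := \{A \in \Omega : |A| \bmod N \in S\}, \qquad \Omega'_\delta := \{A \in \Omega : |A| \bmod N \in S'\}.
\]
These are disjoint by construction, and once $N \geq 1/(2\delta)$ both carry $\mu_N$-mass at least $1/2 - \delta$, giving \eqref{omegas}.

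For any $A_0 \in \Omega_\delta$, $A'_0 \in \Omega'_\delta$ and any $t \geq 0$, consider $E_t := \{A \in \Omega : |A| \bmod N \in S + t \pmod N\}$. The conservation identity forces $|A(t)| \in S + t$ and $|A'(t)| \in S' + t$ modulo $N$; as $S + t$ and $S' + t$ are disjoint in $\bbZ_N$, we get $P(t)(E_t) = 1$ and $P'(t)(E_t) = 0$, hence $\|P(t) - P'(t)\|_{TV} = 1$ for every $t \geq 0$. In particular \eqref{TVlow} holds at $t = \alpha N^2$ for any constant $\alpha > 0$, and the factor $\alpha N^2$ is not substantively used: the argument produces an exact non-mixing statement at every time.

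\textbf{Main (conceptual) obstacle.} The only subtle point is the definition of $\mu_N$ on the periodic chain. If one insists on a phase-fixed convention with $\mu_N$ supported on a single $C_r$, the partition above collapses and one must instead identify a genuine shape statistic --- for example the signed height fluctuation $\phi(A) := h(A) - \lfloor |A|/N \rfloor$ or a low Fourier mode of the column profile --- which has spread of order $\log N$ under $\mu_N$ by the fluctuation bounds underlying Theorems~\ref{pr:height_bound} and~\ref{pr:typical_height}, while changing by much less than this spread over $\alpha N^2$ steps when $\alpha$ is sufficiently small. Either route delivers the stated lower bound, but the conservation-law approach is the most robust and actually gives $\|\cdot\|_{TV} = 1$ at every time.
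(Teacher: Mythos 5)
Your conservation-law argument is arithmetically sound and does technically satisfy the literal wording of the theorem: indeed $|A(t)|=|A_0|+t$ exactly, the shifted chain increases $|A|\bmod N$ by $1$ per step so its period is a multiple of $N$, the unique stationary distribution of a periodic irreducible positive recurrent chain gives mass $1/N$ to each cyclic class, and your two-block partition of $\bbZ_N$ therefore yields disjoint sets of $\mu_N$-mass at least $1/2-\delta$ on which the two laws $P(t)$ and $P'(t)$ are mutually singular for every $t$. (Your worry about a ``phase-fixed convention'' is misplaced: no such convention exists, since a measure supported on a single cyclic class cannot solve $\pi P = \pi$; the stationary distribution is unique and necessarily weights cyclic classes equally, so the Ces\`aro hedge is unnecessary.)

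However, this proves a vacuous version of the result, and the fact that your argument gives $\|P(t)-P'(t)\|_{TV}=1$ for \emph{every} $t$, with the scale $\alpha N^2$ playing no role, should itself be a warning sign. The paper explicitly dismisses this case: in Section~\ref{sec:observable} the authors take $A_0\in\Omega_\delta$, $A_0'\in\Omega_\delta'$ and write ``assume $|A_0|=|A'_0|$ without loss of generality, as if not then two IDLA processes starting from $A_0$ and $A'_0$ will never meet'' --- exactly your observation, treated as the degenerate branch. The substantive content, and what makes the lower bound a genuine complement to Theorem~\ref{th:ZNmain_teo} (which is stated under the restriction $|A_0|=|A'_0|$), is that even for cardinality-matched pairs the processes take $\Omega(N^2)$ steps to couple. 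Accordingly the paper's $\Omega_\delta,\Omega_\delta'$ in \eqref{omega_precise} are defined by a \emph{shape} statistic --- the horizontal imbalance $u_A = N^{-1}\sum_{n\in A}\psi(n)$ with $\psi(n)=e^{q_N n_2/N}\sin(2\pi n_1/N)$ a discrete-harmonic function --- so they intersect every residue class $C_r$. The proof then shows $u_A$ has order-one spread at stationarity (via the GFF fluctuation result, Theorem~\ref{th:GFF1}), that $u(t)=u_{A(t)}-u_{A'(t)}$ is a martingale, and that its quadratic variation over $\alpha N^2$ steps is $O(\alpha)$ once the a priori height bound Lemma~\ref{le:a_priori} is in place, which is where the scale $N^2$ enters. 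Your closing instinct that one must find a ``genuine shape statistic'' is the right one, but note that the candidate $\phi(A)=h(A)-\lfloor|A|/N\rfloor$ you suggest is not a martingale under IDLA; the discrete-harmonic $\psi$ is chosen precisely so that the resulting observable is, which is what makes the quadratic-variation estimate tractable.
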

The above theorem tells us that two independently sampled stationary profiles $A,A'$ are, with probability arbitrarily close to $1/2$, different enough for IDLA to need order~$N^2$ steps to forget from which one it started. To prove this, we identify a slow-mixing statistic based on the second eigenvector of simple random walk on $\bbZ_N$ (see Definition \ref{def:imbalance} in Section~\ref{sec:observable}).


\subsection{Outline of the proofs} \label{sec:sketch}
We start by showing that IDLA forgets polynomially high profiles in polynomial time, as stated in the following theorem.
\begin{Theorem}\label{th:main2}
Let $A_0 , A'_0$ be any two clusters in $\Omega$ with $|A_0|=|A'_0| $ and define 
	\[ h_0 = \max\{ h(A_0) , h(A'_0) \}.\] 
Assume that $h_0 \leq N^m$ for some fixed $m \in \bbN$. 
Let $(A(t))_{t\geq 0}$ and $(A'(t))_{t\geq 0}$ be two IDLA processes starting from $A_0 , A'_0$ and denote the laws of $A(t)$, $A'(t)$ by $P(t) , P'(t)$ respectively. 
Then for any $\g >0$ there exists a constant $d'_{\g , m} $, depending only on $\g $ and $m$, such that for 
	\[ t_{\g , m} = h_0 N + d'_{\g , m } N^2 \log N\] 
and $N$ large enough, it holds
	\[  \| P(t_{\g ,m} ) - P'(t_{\g , m} ) \|_{TV} \leq N^{-\g} . \]
\end{Theorem}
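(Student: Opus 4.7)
The plan is to couple the two processes via the Abelian property of IDLA, handling the initial-profile discrepancy through an auxiliary ghost-particle coupling and using Theorem~\ref{pr:height_bound} to control the shape of intermediate clusters.

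By the Abelian property (Section~\ref{sec:Abelian}), running $(A(t))_{t\geq 0}$ from $A_0$ with $t_{\gamma,m}$ standard particles is equivalent in law to starting from the flat cluster $R_0$ and processing in any order (i) $|A_0|$ ``ghost'' particles launched from the sites of $A_0\setminus R_0$, and (ii) $t_{\gamma,m}$ standard particles from level $0$; analogously for $A'$, with ghosts from $A'_0\setminus R_0$. Since $|A_0|=|A'_0|=:n$, both processes use the same number of ghosts. Split $t_{\gamma,m}=t_1+t_2$ with $t_1:=h_0 N+b_{\gamma,m+1}N\log N$ (using the constant from Theorem~\ref{pr:height_bound}) and $t_2=\Theta(N^2\log N)$, and process the $t_1$ standard particles first, then the $n$ ghosts, then the remaining $t_2$ standard particles. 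Couple the two processes by using common random walks for the standard particles; then both processes share the \emph{same} cluster $K$ after the first $t_1$ standard particles. Since $t_1\leq N^{m+1}$ for large $N$, Theorem~\ref{pr:height_bound} (applied with $m$ replaced by $m+1$) gives
\[ R_{h_0}\subseteq K\subseteq R_{h_0+2b_{\gamma,m+1}\log N} \]
with probability at least $1-N^{-\gamma}$, so that all ghost launch sites of both processes (being at heights $\leq h_0$) are contained in $K$.

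The heart of the proof is then to couple the ghost phases so that the set $S$ of new sticking sites of $A$'s ghosts equals the set $S'$ of $A'$'s ghosts with probability $\geq 1-N^{-\gamma}$. Each ghost walk must traverse vertical distance at least $b_{\gamma,m+1}\log N$ before exiting the evolving cluster, giving its $\bbZ_N$-coordinate time for partial mixing. Processing ghosts in alternating pairs (one from each process) and coupling the corresponding walks by reflection on $\bbZ_N$, together with the log-smoothness of $\partial K$ inherited from Theorem~\ref{pr:height_bound}, allows a pairwise match of the exit sites; a union bound over the $n\leq h_0 N\leq N^{m+1}$ ghost pairs yields the claim. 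Once $K\cup S=K\cup S'$, the $t_2$ remaining standard particles are processed identically under the common random walks, so the two final clusters coincide. The main obstacle is precisely this ghost coupling: the aggregation is sequential (later ghosts see earlier sticking sites), the boundary $\partial K$ is only logarithmically smooth, and the vertical buffer $b_{\gamma,m+1}\log N$ is much smaller than the $\bbZ_N$-mixing scale $N^2$, so a direct mixing argument fails and the coupling must exploit the near-rectangular structure of $K$. The $\Theta(N^2\log N)$ factor in $t_{\gamma,m}$ traces to the $\bbZ_N$ mixing time times the number of log-fluctuation blocks needed to absorb boundary discrepancies, mirroring the block construction used to prove Theorem~\ref{pr:height_bound}.
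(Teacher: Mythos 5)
Your high-level plan — use the Abelian property to reorder the process so the ghost particles from $A_0\setminus R_0$ and $A'_0\setminus R_0$ are injected into a pre-built rectangular cluster, then couple the ghost walks pairwise by reflection on $\bbZ_N$ — is exactly the water-level coupling the paper uses, and the ingredients (Theorem~\ref{pr:height_bound}, Proposition~\ref{pr:coupling}, Lemma~\ref{le:mix}) are the right ones. However, your specific choice of split $t_1 = h_0 N + b_{\gamma,m+1}N\log N$ followed by the ghosts followed by $t_2=\Theta(N^2\log N)$ breaks the argument, and you correctly flag this yourself without fixing it.

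The issue is that with $t_1 = h_0N + b_{\gamma,m+1}N\log N$, the intermediate cluster $K$ has height roughly $h_0 + b_{\gamma,m+1}\log N$, so the ghost launch sites (heights $\leq h_0$) sit only $\mathcal{O}(\log N)$ \emph{levels} below $\partial K$. Lemma~\ref{le:mix} (and Proposition~\ref{pr:coupling}, which requires $A(0)\supseteq R_{C_{\gamma,m}N\log N}$) show that a reflection-coupled pair of walks need to traverse order $N\log N$ \emph{levels} — not $\log N$ — before the horizontal coordinates meet with probability $1 - N^{-\gamma}$. So your ``partial mixing'' buffer is short by a factor of $N$, the pairwise matching of ghost exit sites fails, and the ``near-rectangular structure of $K$'' does not rescue it: the relevant obstruction is the mixing time of $\bbZ_N$, which is $N^2$ regardless of how flat $\partial K$ is. The remedy, which is exactly what the paper does, is to put \emph{all} $t_{\gamma,m}$ standard particles before the ghosts (i.e.\ $t_2 = 0$): then by Theorem~\ref{pr:height_bound} the pre-ghost cluster contains $R_{h_0 + C_{\gamma+1,m+1}N\log N}$ with high probability, every ghost starts $\geq C_{\gamma+1,m+1}N\log N$ levels below the boundary, and Proposition~\ref{pr:coupling} applies verbatim. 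Your reading of where the $N^2\log N$ factor in $t_{\gamma,m}$ comes from is likewise off: it is not the number of blocks in the proof of Theorem~\ref{pr:height_bound}, but precisely the number of extra particles needed to raise the water level by $\Theta(N\log N)$ rows so that the ghosts have a full horizontal-mixing window.
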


Theorem \ref{th:main2} is proved via a coupling argument that we spell out below. 
We then combine it with Theorem \ref{pr:height_bound} to show that stationary clusters have at most logarithmic height with high probability (cf.\ Theorem~\ref{pr:typical_height}). The upper bound (cf.\ Theorem~\ref{th:ZNmain_teo}), is a simple corollary of these two results. 
We sketch here the main ideas behind these proofs. \vspace{1.5mm}

\subsubsection{Theorem \ref{th:main2}: the water level coupling.}
Let $A_0$ and $A'_0$ be two clusters in $\Omega$ with $n_0:=|A_0|=|A'_0|$ and  $h_0 = \max\{ h(A_0) , h(A'_0) \} \leq N^m$.
We are going to build large IDLA clusters starting from $A_0 , A'_0$ in a convenient way.
Let $t_{\g , m} = h_0 N + d'_{\g , m} N^2 \log N$ as in Theorem \ref{th:main2}. 
Introduce a new IDLA cluster $W_0$, independent of everything else, built by adding $t_{\g , m}$ particles to the flat configuration~$R_0$. Note that since $W_0$ contains polynomially many particles, by Theorem \ref{pr:height_bound} it will be completely filled up to height $h_0+b_{\g , m} N\log N$ for a suitable constant $b_{\g , m }$ with high probability.

\begin{figure}[h!]
  \centering
    \includegraphics[width=.72\textwidth]{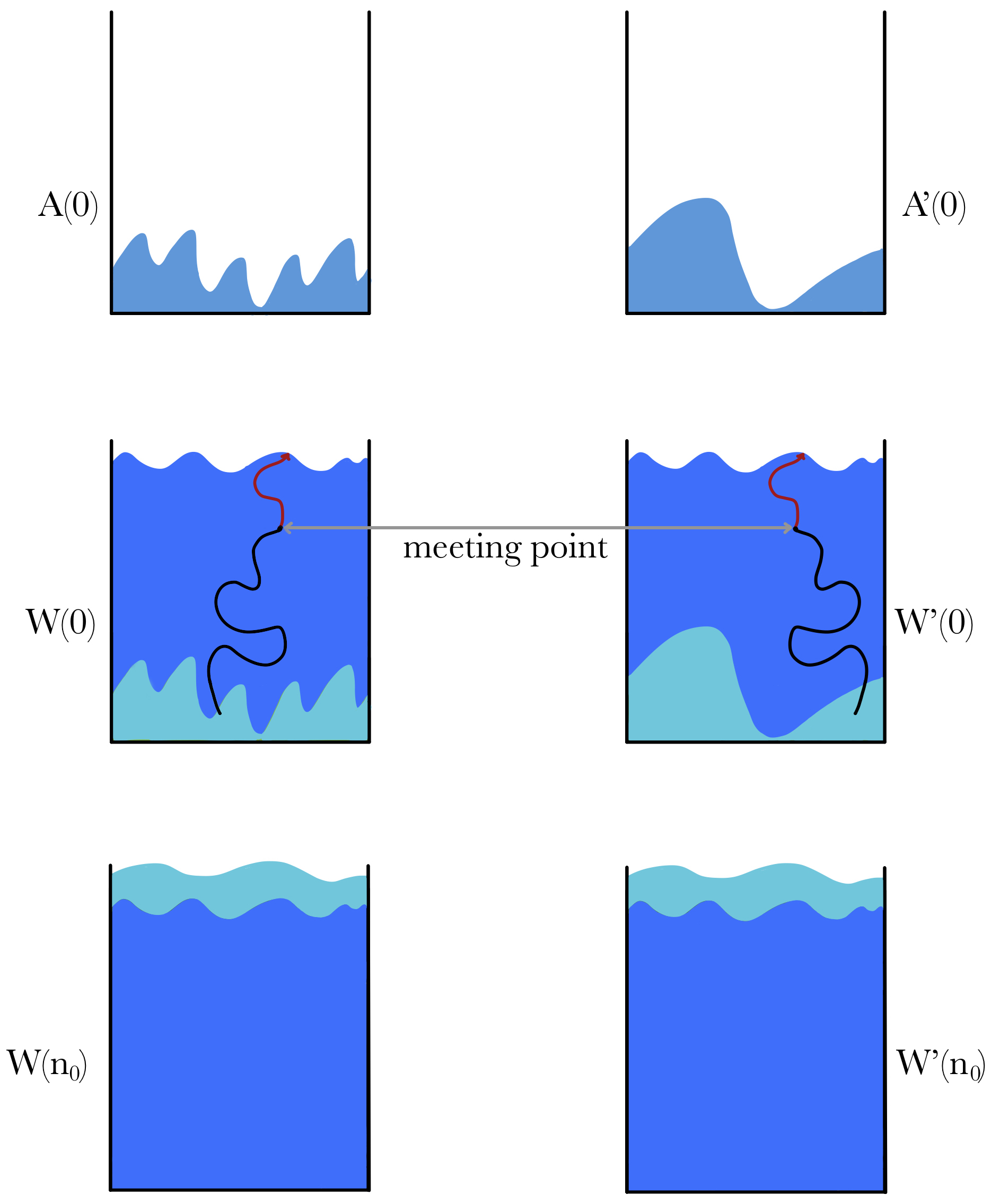}
    \caption{The ``water level coupling'' in the proof of Theorem \ref{th:main2}. Top row: initial clusters. Middle row: the water cluster $W(0)$, with frozen particles in light blue. Frozen particles are released in pairs, and their trajectories (in black) are coupled so that they meet  with high probability before exiting $W(0)$. After meeting, they follow the same trajectory (in red). As a result, they exit in the same exit location.  Bottom row: identical clusters $W(n_0) = W'(n_0)$  resulting from the release~of~all~frozen~particles.} \label{fig:sketch_UB}
\end{figure}
We take $W_0$ to be the initial configuration of two auxiliary processes $(W(t))_{t\leq n_0}$ and $(W'(t))_{t\leq n_0}$, 
that we think of as water flooding the clusters. Water falling in $A_0$ (resp.\ $A'_0$) freezes, and it is only released at a later time. Frozen water particles are released in pairs, and their trajectories are coupled so to make the particles meet with high probability before exiting the respective clusters. 
Clearly, by taking the initial water cluster $W_0$ large enough we can ensure that all pairs of frozen particles meet with high probability before exiting their respective clusters, in which case we have $W(s) = W'(s)$ for all $s\leq n_0$. 
The theorem then follows by invoking the Abelian property (cf.\ Section \ref{sec:Abelian}) for the equalities in law
	\[ A(t_{\g , m} ) \stackrel{(d)}{=} W(n_0) , \qquad 
	A'(t_{\g , m} ) \stackrel{(d)}{=} W'(n_0) . \]
See Figure~\ref{fig:sketch_UB} for an illustration of this argument, and Section \ref{sec:th:main} for the details.

\subsubsection{Theorem \ref{pr:typical_height}: typical clusters are shallow.}
It suffices to prove the result for $\nu_N = \mu_N$. 
Assume Theorem \ref{pr:height_bound}, according to which an IDLA process starting from flat has logarithmic fluctuations for polynomially many steps, with high probability. It suffices to show that such process reaches stationarity in polynomial time to conclude.  This would follow\linebreak from Theorem \ref{th:main2}, if we knew that stationary clusters have at most polynomial height in~$N$, with high probability. 
To see this, we first observe that stationary clusters are dense, since an IDLA process spends only a small amount of time at low density configurations. Here the density of a cluster $A \in \Omega$ is measured via its \emph{excess height} $\cE (A)=h(A) - |A|/N$, that is the difference between the actual height and ideal height of $A$. We show that if the excess height is too large, then it has a negative drift under IDLA dynamics. The advantage of measuring the clusters' density via their excess height lies on the fact that a bound on the latter easily translates on a bound on the number of empty sites below the top level. Once we have such bound, we can try to \emph{fill the holes} below the top level by releasing enough (but at most polynomially many) additional particles. This will leave us with clusters of at most polynomial height. 
Take any deterministic time of the form $t=N^k$ for large enough $k$. Then the cluster $A(t)$ is both stationary and it has at most polynomial height with high probability, which implies that typical clusters have at most polynomial height with high probability, as claimed. 
\begin{figure}[ht!]
  \centering
    \includegraphics[width=0.87\textwidth]{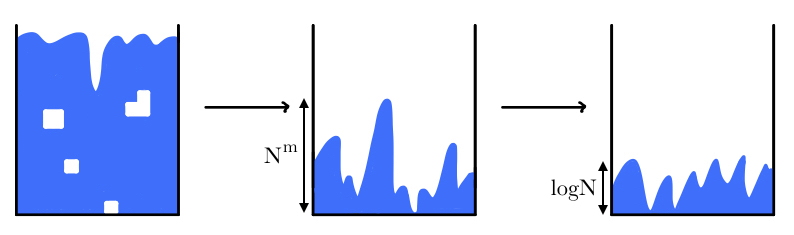}
    \caption{Sketch of the proof of Theorem \ref{pr:typical_height}. Starting from a high density configuration (left), the associated Shifted IDLA process reaches a polynomially high profile (centre), and then a logarithmically high one (right). Both transitions take at most polynomially many steps.} \label{fig:sketch_PR}
\end{figure}

%
%
%
 
\subsection{Organization of the paper}
We start by recalling the Abelian property of Internal DLA in Section~\ref{sec:Abelian}. We then collect some useful preliminary results in Section \ref{sec:preliminaries}. In Section \ref{sec:th:main} we prove Theorem \ref{th:main2}, concerning deterministic initial profiles. 
In Section \ref{sec:height} we bound the fluctuations of IDLA clusters with polynomially many particles (cf.\ Theorem \ref{pr:height_bound}), which we then use in Section~\ref{sec:shallow} to prove Theorem \ref{pr:typical_height}. The upper bound (cf.\ Theorem \ref{th:ZNmain_teo}) is a simple corollary of Theorems \ref{th:main2} and~\ref{pr:typical_height}, as we briefly explain in Section~\ref{sec:proof_upper_bound}. The corresponding lower bound (cf.\ Theorem \ref{th:ZNlower_bound}) is proved in Section \ref{sec:lower_bound}. 
We conclude the paper with a short review of the logarithmic fluctuations result by Jerison, Levine and Sheffield (cf.\ Theorem \ref{th:JLS}), that we include in Appendix \ref{app:survey}.

\subsection*{Acknowledgement}
We are very grateful to Tom Holding, James Norris and Yuval Peres for many valuable comments and suggestions, which substantially improved our results. We are also indebted to the referee for a very careful reading of the paper. 
V.S.\ would like to thank Cornell University, where this work was initiated, for the kind hospitality.

\section{The Abelian property} \label{sec:Abelian}
The Abelian property of Internal DLA was first observed by Diaconis and Fulton \cite{diaconis1991growth}. More recently, it has been used to generate exact samples of Internal DLA clusters in less time than it takes to run the constitutent random walks \cite{friedrich2013fast}. For the related model of \emph{activated random walkers} on $\bbZ$, the Abelian property was used to prove existence of a phase transition \cite{rolla2012absorbing}.

To state the version of the Abelian property that will be used in our arguments, let us start by defining the Diaconis-Fulton \emph{smash sum} in our setting.  Given a set $A \in \Omega$ and a vertex $z \in \bbZ_N \times \bbZ$, define the set $A \oplus \{ z \}$ as follows: 
\begin{itemize}
\item[(i)] if $z \notin A$, then $A \oplus \{ z \} := A \cup \{ z \} $, 
\item[(ii)] if $z \in A$, then $A \oplus \{ z \}$ is the random set obtained by adding to $A$ the endpoint of a simple random walk started at $z$ and stopped upon exiting $A$.  
\end{itemize}
\begin{Remark}
Note that if $A(0) = R_0$ and $\{ z_1 , z_2 , \ldots , z_t \}$ are $t$ independent, uniformly distributed vertices at level zero, then we can build an IDLA cluster $A(t)$ by setting 
	\[ A(t) = ( (A(0) \oplus \{ z_1 \} ) \oplus \{ z_2 \} )\oplus \cdots \oplus \{ z_t \}  . \]
\end{Remark}

The Abelian property, stated below, gives  some freedom on how to build IDLA clusters without changing their laws. 

\begin{Proposition}[Abelian property, \cite{diaconis1991growth}] \label{pr:Abelian}
Given any finite set $\{ z_1 , z_2 , \ldots , z_t \} $ of vertices of $\bbZ_N \times \bbZ$, and a set $A \in \Omega$, the law of 
	\[ ( (A \oplus \{ z_1 \} ) \oplus \{ z_2 \} )\oplus \cdots \oplus \{ z_t \} \]
does not depend on the order of the $z_i$'s. More precisely, if $\sigma: \{ 1 , \ldots , t\} \to \{ 1 , \ldots , t\}$ is an arbitrary permutation of $\{ 1 , \ldots , t\}$, we have the equality in distribution 
	\[  ( (A \oplus \{ z_1 \} ) \oplus \{ z_2 \} )\oplus \cdots \oplus \{ z_t \} 
	\stackrel{(d)}{=} 
	( (A \oplus \{ z_{\s (1)} \} ) \oplus \{ z_{\s (2)} \} )\oplus \cdots \oplus \{ z_{\s (t)} \} .\]
\end{Proposition}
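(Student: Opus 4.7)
The plan is to give the classical stack-based proof due to Diaconis and Fulton. First I would reduce to the case of two sources: since every permutation of $\{1,\ldots,t\}$ is a product of adjacent transpositions, and since the operation $B\mapsto B\oplus\{z\}$ is Markovian in $B$, it suffices to prove the identity in law
\[ (B \oplus \{z\}) \oplus \{z'\} \stackrel{(d)}{=} (B \oplus \{z'\}) \oplus \{z\} \]
for any $B \in \Omega$ and any two vertices $z, z' \in \bbZ_N \times \bbZ$.

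For this I would use the \emph{stack representation}. At each vertex $v$ place an i.i.d.\ infinite sequence $\xi_v = (\xi_v^1, \xi_v^2, \ldots)$ of uniformly chosen neighbours of $v$, independently across $v$. Couple both orderings to these same stacks: a particle located at $v$ consumes the next unused symbol of $\xi_v$ to determine its step. It is convenient to reformulate the dynamics as chip-firing: place one chip at each site of $B$ together with one additional chip at each of $z,z'$, and repeatedly \emph{fire} any site holding two or more chips, sending one chip to the neighbour indicated by the next unused symbol on its stack. Both particles have ``settled'' precisely when the chip configuration becomes stable (every site carrying at most one chip). The claim to prove is deterministic: for any fixed stacks, the resulting stable configuration and the odometer $u(v):=\#\{\text{symbols of }\xi_v\text{ consumed}\}$ are identical whether one fires out the excess chip at $z$ before that at $z'$ or vice versa. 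Taking expectations over the stacks will then give the proposition.

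The deterministic claim is the classical \emph{abelian} (or least-action) principle of chip-firing: any two firing schedules that reach a stable configuration produce the same stable configuration and the same odometer. A short exchange argument shows that any two firing sequences admit a common refinement, forcing their odometers to agree pointwise. The main subtlety in the present setting is the infinite initial cluster $B\supset R_0$: one has to verify that both firing schedules terminate almost surely, which follows because each excess chip, once launched, performs a simple random walk on $\bbZ_N\times\bbZ$ driven by the stacks, and such a walk reaches the cofinite target region $(\bbZ_N\times\bbZ_{>0})\setminus B$ in finite time a.s. Modulo this termination check, the least-action argument applies verbatim and yields Proposition \ref{pr:Abelian}; I expect precisely this well-posedness step, rather than the combinatorial abelian argument itself, to be the main technical obstacle.
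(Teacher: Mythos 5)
The paper itself does not reprove the Abelian property: it cites Diaconis--Fulton and then adds only a one-line remark to handle the infinite initial cluster, namely that one may modify the jump rates at level $0$ so as to contract each excursion below level $0$ to a single step; after this the walk lives on $\bbZ_N\times\bbZ_{\geq 0}$ and $A$ becomes a finite set, so the finite-set Diaconis--Fulton theorem applies verbatim. Your proposal instead works directly on the infinite cylinder, reprising the classical stack/chip-firing argument (reduction to a single transposition, common stacks, least-action principle) and supplies an a.s.\ termination argument via recurrence of SRW on $\bbZ_N\times\bbZ$. Both routes are sound and both correctly identify the infinite cluster $B\supset R_0$ as the only new difficulty. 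Your route is somewhat heavier: you must also note that the exchange argument is applied pathwise on the full-measure event where the IDLA firing order terminates (which by least-action then forces \emph{every} legal firing order, in particular the transposed one, to terminate in the same finite number of firings with the same odometer), and that the excess walkers may spend time below level $0$, consuming stack symbols there as well, so the odometer is supported on an infinite region even though it is a.s.\ finite at every site. The paper's jump-rate contraction sidesteps all of this at once by literally reducing to the finite setting, which is cleaner, though it requires checking that the contraction commutes with the smash-sum construction; your direct argument trades that check for a recurrence-and-termination check. No gap in either, just a different trade-off.
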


In light of the above result, given $A \in \Omega$ and  a finite subset $B$ of $\bbZ_N \times \bbZ$, we write $A \oplus B$ to denote $((A \oplus \{ z_1 \} ) \oplus \{ z_2 \} ) \oplus \cdots \oplus \{ z_n\}$, where $z_1 , \ldots z_n$ is an arbitrary enumeration of the elements of $B$. 

\begin{Remark}
We point out that Proposition \ref{pr:Abelian} is stated and proved in \cite{diaconis1991growth} for finite sets, while in our setting the set $A$ is infinite. Nevertheless, we can easily reduce to working with finite sets by changing the jump rates of our random walks at level zero to mimic the hitting distribution of a simple random walk after an excursion below level zero. This has the effect of contracting all excursions below level zero to a single step, and it clearly does not change the law of the model. 
\end{Remark}

\section{Preliminaries} \label{sec:preliminaries}

\subsection{A mixing bound}
For $x\in \bbZ_N$, let $P_x^t$ denote the law of a lazy\footnote{A lazy walk stays in place with probability $1/2$, and otherwise takes a simple random walk step.} simple random walk on~$\bbZ_N$ starting from $x$, and denote its stationary measure, uniform on $\bbZ_N$, by $\pi_N$. For $\e >0$ define
	\begin{equation} \label{t_mix}
	 \tau_N (\e ) := \inf \Big\{ t\geq 0 : \max_{x\in \bbZ_N} \| P^t_x - \pi_N \|_{TV} \leq \e  \Big\} .  
	 \end{equation}
The Total Variation (TV) mixing time of this walk is defined to be $\tau_N(1/4)$, which we simply denote by $\tau_N$. It is well known that 
	\begin{equation} \label{a_cy}
	 \t_N \leq N^2 , 
	 \end{equation}
and moreover 
	\begin{equation} \label{t_epsilon}
	 \tau_N ( N^{-\g}) \leq  \big\lceil \log_2 N^\g \big\rceil \tau_N
	\leq 3 \g N^2 \log N   
	\end{equation}
for any $\g >0$ (see e.g.\ \cite{wilmer2009markov}). 
Let $\o = (x(t) , y(t))_{t\geq 0}$  be a simple random walk on $\bbZ_N \times \bbZ$  and define 
	\[ \tau^y_n := \inf \{ t\geq 0 : y(t) = n \} 
	 \]
to be the first time it reaches level $n$.  
The next lemma tells us that by the time the walker has travelled for about $N\log N$ levels in the vertical coordinate, it has mixed well in the horizontal one. 
\begin{Lemma} \label{le:mix}
For all $\g >0$ and all $n \geq 10 \g  N \log N$, it holds 
	\[ \max_{x\in \bbZ_N} \bbP_{(x,0)} (\tau_n^y < 3\g N^2 \log N) \leq N^{-\g }  \]
for $N$ large enough. 
\end{Lemma}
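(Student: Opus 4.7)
The key observation is that the vertical coordinate $y(t)$ of a simple random walk on $\bbZ_N \times \bbZ$ evolves independently of $x(t)$: at each step, with probability $1/2$ the walker moves horizontally (leaving $y(t)$ unchanged), and with probability $1/2$ it moves vertically by $\pm 1$ with equal probability. Thus $(y(t))_{t\geq 0}$ is a lazy simple random walk on $\bbZ$ starting from $0$, and the quantity $\bbP_{(x,0)}(\tau_n^y < T)$ does not depend on $x$. So the uniform-in-$x$ claim reduces to a one-dimensional tail bound on the running maximum of a lazy SRW.

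My plan is to apply the standard Chernoff-type bound for the running maximum via the exponential martingale. Writing $y(t) = \sum_{i=1}^t \xi_i$ where the $\xi_i$ are i.i.d.\ with $\bbP(\xi_i = \pm 1) = 1/4$ and $\bbP(\xi_i = 0) = 1/2$, Hoeffding's lemma gives $\bbE[e^{\lambda \xi_i}] \leq e^{\lambda^2/2}$ for every $\lambda \in \bbR$. Hence $(e^{\lambda y(t) - t\lambda^2/2})_{t\geq 0}$ is a supermartingale, and Doob's maximal inequality applied to the non-negative submartingale $e^{\lambda y(t)}$ yields, for any $\lambda > 0$ and $T \in \bbN$,
\[
\bbP\Big(\max_{0 \leq t \leq T} y(t) \geq n\Big) \;\leq\; e^{-\lambda n}\,\bbE\big[e^{\lambda y(T)}\big] \;\leq\; \exp\!\Big(-\lambda n + \tfrac{T \lambda^2}{2}\Big).
\]
Optimizing over $\lambda$ by taking $\lambda = n/T$ gives the Gaussian tail bound
\[
\bbP\Big(\max_{0 \leq t \leq T} y(t) \geq n\Big) \;\leq\; \exp\!\Big(-\tfrac{n^2}{2T}\Big).
\]

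Finally I substitute $T = 3\gamma N^2 \log N$ and $n = 10\gamma N \log N$ (the bound is monotone in $n$, so it suffices to verify it at the threshold). A direct computation gives
\[
\frac{n^2}{2T} \;=\; \frac{100\,\gamma^2 N^2 \log^2 N}{6\,\gamma N^2 \log N} \;=\; \tfrac{50}{3}\,\gamma \log N,
\]
so that $\bbP(\tau_n^y < T) \leq N^{-50\gamma/3}$, which is well below $N^{-\gamma}$ (in fact one could absorb an arbitrary polynomial loss). Taking $N$ large enough to absorb the constant from the reduction to the threshold case completes the proof. The argument has essentially no obstacle: the only thing to be careful about is that the running maximum, not just the terminal value $y(T)$, must be controlled, which is precisely what Doob's inequality provides.
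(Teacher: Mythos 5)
Your proof is correct, and it is genuinely simpler and more direct than the paper's. You exploit that the vertical coordinate $y(t)$ is an autonomous lazy simple random walk on $\bbZ$ (independent of the $x$-coordinate, so the $\max_x$ in the statement is vacuous), and then apply the standard exponential-martingale maximal inequality to get the Gaussian tail bound $\bbP(\max_{t\le T} y(t) \ge n) \le e^{-n^2/(2T)}$. Plugging in $T = 3\gamma N^2 \log N$ and $n = 10\gamma N \log N$ gives $N^{-50\gamma/3}$, comfortably below $N^{-\gamma}$ — and as a bonus your bound holds for all $N$, with no "$N$ large enough" caveat. The paper instead decomposes $\tau_n^y$ as $\sum_{i<\hat\tau^y_n} G_i$, where $\hat\tau^y_n$ counts vertical moves and the $G_i$ are Geometric waiting times between vertical moves; it then splits the event and controls the two factors separately, using a Chernoff bound for the geometric sum and the explicit generating function $\bbE_0[z^{\hat\tau^y_1}] = (1-\sqrt{1-z^2})/z$ of the 1D SRW first-passage time. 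The paper's route is more involved but uses the same jump-process decomposition that reappears in the proof of Proposition~\ref{pr:coupling}, so it buys some infrastructure reuse; your route buys brevity, a sharper exponent, and avoids the generating-function computation entirely.
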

\begin{proof}
Let $(\hat{x} (t))_{t\geq 0} $ be the jump process associated to the motion of the horizontal coordinate, obtained by only looking at $(x(t))_{t\geq 0}$ when the random walk makes a horizontal step. Similarly, let $(\hat{y} (t))_{t\geq 0} $ be the jump process associated to the motion of the vertical coordinate. 
Let
	\[ \hat{\t}^y_n := \inf\{ t\geq 0 : \hat{y} (t) =n \} \]
denote the number of vertical steps made by the walk to reach level $n$. 
For $i\geq 0$, let $G_i -1$ denote the number of moves in the $x$ coordinates between the $i^{th}$ and the $(i+1)^{th}$ move in the $y$ coordinate. Then $(G_i)_{i\geq 0}$ is a collection of i.i.d.\ Geometric 
random variables of mean $2$, and we have 
	\[ \tau^y_n  = \sum_{i=1}^{\hat{\tau}^y_n -1} G_i . \]
It follows that
	\begin{equation} \label{eq:tt*}
	 \begin{split} 
	\max_{x\in \bbZ_N }   \bbP_{(x,0)}  (  \tau^y_n & < N^2 \log N^{3\g}  ) 
	 = \bbP_{0} \bigg( \sum_{i=1}^{\hat{\tau}^y_n -1} G_i   < N^2 \log N^{3\g} \bigg) \\
	& \leq \bbP_{0} \bigg(  \sum_{i=1}^{8 \g N^2 \log N } G_i < 3 \g N^2 \log N  \bigg) + \bbP_0 \big( \hat{\t}^y_n \leq 9 \g N^2 \log N  \big) . 
	\end{split} 
	\end{equation}
We estimate the two terms separately: the first one is controlled by large deviations estimates, while the second one by using the explicit expression for the moment generating function of $\hat{\t}^y_n $. More precisely, we have 
	\[ \begin{split} 
	\bbP_{0} \bigg(  \sum_{i=1}^{8 \g N^2\log N } G_i < 3\g N^2 \log N \bigg) & = 
	\bbP_{0} \bigg( 2 ^{ - \sum_{i=1}^{8 \g N^2 \log N} G_i } >  2^{- 3 \g N^2 \log N }  \bigg)
	\\ &
	\leq  \Big[ 2 \bbE ( 2^{- G_1} )^{\frac{8}{3}} \Big]^{3\g N^2 \log N} 
	\leq  \Big( \frac{2}{9} \Big)^{3 \g N^2 \log N } 
	\leq N^{-3\g} ,
	\end{split}
	\]
where we have used that $\bbE (2^{-G_1}) = 1/3$ for the second inequality.
For the second term, it is simple to check that, for $z \in (0,1)$,
	\[ \bbE_0 \big( z^{\hat{\t}^y_1 } \big) = \frac{1-\sqrt{1-z^2}}{z} , 
	\qquad \bbE_0 \big( z^{\hat{\t}^y_n } \big) 
	= \bbE_0\big( z^{\hat{\t}^y_1 } \big)^n. \]
This gives, for any $z \in (0,1)$,
	\[ \begin{split} 
	\bbP_0 \big( \hat{\t}^y_n < 9 \g N^2 \log N   \big) 
	& \leq \bbE_0 \big( z^{\hat{\t}^y_n } \big)  z^{-9 \g N^2 \log N  } \\
	& = \bigg( \frac{1-\sqrt{1-z^2}}{z} \bigg)^n \cdot \bigg( \frac{1}{z} \bigg)^{9 \g N^2 \log N  } . 
	\end{split} \]
Choose $z\in (0,1)$ of the form $z^2 = 1-1/\a^2$ for some $\a \gg 1$ as $N\to\infty$, to have that 
	\[ \begin{split} 
	\bigg( \frac{1-\sqrt{1-z^2}}{z} \bigg)^n \cdot \bigg( \frac{1}{z} \bigg)^{9 \g N^2 \log N  } 
	& \leq \Big( 1-\frac{1}{\a} \Big)^{n/2} \, \Big( 1-\frac{1}{\a^2} \Big)^{-\frac{9}{2} \g N^2\log N } \\ & 
	\leq \exp \Big( -\frac{n}{2\a} + \frac{5\g }{\a^2} N^2 \log N \Big) .
	\end{split} \]
To have the far r.h.s.\ smaller than, say,  $N^{-5\g/4 }$ it suffices to take 
	\[ n \geq \frac{10 \g }{\a} N^2\log N +  \frac{5}{2}\g \a \log N .\]
Optimizing over $\a$ suggests to take $\a = 2N$, to get $n \geq 10 \g N \log N$. 
\end{proof}

\subsection{The role of starting locations}
The following result tells us that if the walkers have time to mix in the horizontal coordinate before exiting the cluster, the resulting IDLA configuration does not depend too much on their initial positions. 
\begin{Proposition} \label{pr:coupling}
Fix any $T>0 $  such that $T\leq N^m$ for some finite $m \in \bbN $. Let $\{ (x_i , y_i ) \}_{1\leq i \leq T }$ and $\{ (x'_i , y'_i ) \}_{1\leq i \leq T }$ denote two fixed collections of vertices of $\bbZ_N \times \bbZ$ such that $y_i , y_i' \leq 0$ for all $i\leq T$. Let, moreover, $(A(t))_{t\leq T}$ and $(A'(t))_{t\leq T}$ be two IDLA processes with starting configurations $A(0) = A'(0)$, and such that the $i^{th}$ walkers start from $(x_i , y_i)$ and $(x_i' , y_i')$ respectively. Then there exists a coupling of the two processes such that the following holds. For any $\g >0$ there exists a finite constant $C_{\g , m }$, depending only on $\g$ and $m$, such that if 
	\begin{equation}\label{A0}
	 A(0) = A'(0) \supseteq R_{ C_{\g , m } N\log N }
	 \end{equation}
then 
	\begin{equation} \label{AT}
	 \bbP ( A(t) = A'(t) \mbox{ for all }t\leq T ) \geq 1 - N^{-\g }. 
	 \end{equation}
\end{Proposition}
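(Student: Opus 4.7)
The plan is to build the coupling pair by pair, exploiting the fact that each walker must climb $\gtrsim N \log N$ vertical levels before it can exit the cluster --- more than enough time for the horizontal coordinate to mix. Assume inductively that $A(i-1) = A'(i-1) =: C \supseteq R_K$ for $K := C_{\gamma,m} N \log N$. I aim to couple the $i$-th pair of walkers $w_i, w_i'$ so that they exit $C$ at the same site with probability at least $1 - c\,N^{-(\gamma + m + 1)}$; a union bound over the $T \leq N^m$ pairs then yields \eqref{AT}, since $cN^m \cdot N^{-(\gamma+m+1)} \leq N^{-\gamma}$ for $N$ large.

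For a single pair, I plan the following three-phase coupling. In Phase 1, run $w_i$ and $w_i'$ independently until each first reaches level $K$, at times $\tau_K$ and $\tau'_K$. In Phase 2, apply a TV coupling at these stopping times to enforce $X(\tau_K) = X'(\tau'_K)$, where $X, X'$ denote horizontal coordinates. In Phase 3, from $\tau_K$ and $\tau'_K$ onward, drive both walkers with identical step increments. Since $C \supseteq R_K$ and both walkers start at level $\leq 0$, neither can exit $C$ before first reaching level $K$; therefore, once synced at the common position $(X, K)$, both walkers trace identical relative trajectories and must exit $C$ at the same site.

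The main obstacle is Phase 2: verifying that $X(\tau_K)$ is within $N^{-(\gamma+m+1)}$ TV of uniform on $\bbZ_N$, despite $\tau_K$ being a random stopping time correlated with the horizontal dynamics through the shared time axis. My plan is to decompose the walk into three independent ingredients: a Bernoulli$(1/2)$ ``choice'' sequence $(\epsilon_s)$ indicating horizontal versus vertical moves, and i.i.d.\ $\pm 1$ sequences $(h_j), (v_k)$ of horizontal and vertical directions. Under this decomposition $\tau_K$ is measurable with respect to $(\epsilon_s, v_k)$ alone, hence independent of $(h_j)$. Comparing $w_i$ with a shadow walker $w_0$ started from $(U, y_i)$ with $U$ uniform on $\bbZ_N$ and coupled via the same $(\epsilon_s), (v_k)$ together with a mixing coupling of horizontal moves, horizontal translation invariance gives that $w_0$'s horizontal position at the common hitting time is exactly uniform. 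Choosing $C_{\gamma,m} \geq 10(\gamma + m + 1)$, Lemma~\ref{le:mix} yields $\tau_K \geq 3(\gamma+m+1)N^2 \log N$ with probability $\geq 1 - N^{-(\gamma+m+1)}$, and on this event \eqref{t_epsilon} allows the two horizontal lazy walks on $\bbZ_N$ to be coupled to agree with probability $\geq 1 - N^{-(\gamma+m+1)}$. The symmetric statement for $w_i'$ then enables the Phase 2 TV coupling, and Phase 3 plus the union bound are routine.
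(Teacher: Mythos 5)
Your three-phase plan (independent climbs to level $K$, one-shot TV coupling of horizontal positions at the hitting times, identical increments thereafter) is structurally different from the paper's proof, which synchronizes the two walkers vertically from the outset, couples the horizontal coordinates by \emph{reflection} on $\bbZ_N$, and shows they meet before reaching level $K$ via the Geometric-sum comparison combined with Lemma~\ref{le:mix}. Both proofs exploit the same fact --- the walker has $\Omega(N^2\log N)$ steps to mix horizontally before escaping --- but yours packages it as a stopping-time TV-coupling and the paper's as a running reflection coupling.

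The gap is in Phase 2, for $N$ even. Once you condition on $(\epsilon_s, v_k)$ to make $\tau_K$ deterministic, the horizontal position at $\tau_K$ is $x_i + S_H \bmod N$, where $H = \#\{s \le \tau_K : \epsilon_s = \text{horizontal}\}$ is now a \emph{fixed} integer and $S$ is a \emph{non-lazy} $\pm 1$ walk. This distribution is supported entirely on the parity class $x_i + H \bmod 2$, so its TV distance to uniform on $\bbZ_N$ is $\ge 1/2$, not $N^{-(\gamma+m+1)}$; and the same obstruction blocks any ``mixing coupling'' of $w_i$ with the shadow $w_0$, since with probability $1/2$ the offset $x_i - U$ has the wrong parity and the two non-lazy walks can never meet. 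Invoking \eqref{t_epsilon} doesn't rescue this because \eqref{t_epsilon} is a statement about the \emph{lazy} walk averaged over the laziness pattern, and you have conditioned on exactly that pattern by freezing $(\epsilon_s)$. Two repairs are available: either show that $H$ has approximately balanced parity so the mixture over $H$ washes out the parity constraint (this needs a quantitative estimate that the proposal doesn't give, and it's delicate because $H = \tau_K - \hat\tau^y_K$ ties the parities of $\tau_K$ and $\hat\tau^y_K$ together), or use the explicit parity-adjustment trick the paper employs --- suppress one horizontal move of one walker when $N$ is even and $|x_i - x_i'|$ is odd --- and then run a reflection coupling, for which the meeting-time bound is clean. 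Without one of these, the claimed failure probability per pair is not achievable for $N$ even.
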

\begin{Remark}
In particular, this tells us that, as long as the IDLA cluster is filled up to level $C_{\g , m } N\log N$, releasing the next $T$ walkers from fixed initial locations below level~$0$ or from uniform locations at level $0$  results in the same final cluster with high probability. 
\end{Remark}
\begin{proof} 
This is an easy consequence of Lemma \ref{le:mix}. 
For $i\leq T$, let $\o_i = ( (x_i(k) , y_i(k) ) )_{k\geq 0}$ and $\o'_i = ( ( x '_i(k) , y'_i(k) ) )_{k\geq 0}$ denote the simple random walk trajectories of the $i^{th} $ walkers starting from $(x_i , y_i )$ and $(x_i' , y_i')$ respectively. 
These are coupled as follows. If, say, $y_i < y_i ' $ then $\o_i '$ stays in place until $\o_i$ reaches level $y_i '$ (and vice versa if $y_i > y_i '$). We can therefore assume that $y_i = y_i '$ without loss of generality. If, moreover, $N$ is even and $|x_i - x_i '|$ is odd, then the first time that $\o_i$ moves in the horizontal coordinate we keep $\o_i '$ in place. Since the probability of $\o_i$ reaching level $N$ before making a horizontal step is $o(2^{-N})$ for large $N$, we can assume that $|x_i - x_i '|$ is even without loss of generality. 

The walks move as follows.
If $\o_i$ moves in the $y$ coordinate, then so does $\o_i '$, and the two walks take the same step. Thus $y_i(0) = y_i '(0) $ implies that $y_i(k) = y_i '(k)$ for all $k\geq 0$. 
If, on the other hand, $\o_i$ moves in the $x$ coordinate, then so does $\o_i '$, and the two walks move according to the reflection coupling on the $N$-cycle (cf.\ Section~\ref{sec:sketch}). 
 Finally, the walkers $\o_i$ and $\o '_i$ stick together upon meeting, that is if $\o_i(k) = \o_i '(k)$ for some $k\geq 0$ then $\o_i(j) = \o_i '(j)$ for all $j\geq k$. Note that if $A(i-1) = A'(i-1)$ and the $i^{th}$ walkers meet before exiting the identical clusters, then $ A(i) = A'(i)$. 

Let, consistently with the notation introduced in the proof of the previous result, $(\hat{x}_i(k))_{k\geq 0} $ and $(\hat{x}_i '(k))_{k\geq 0} $ be the jump processes associated to $(x_i(k))_{k\geq 0}$ and $(x_i'(k))_{k\geq 0}$. 
Then, if 
	\[ \hat{\t}_x := \inf \{ k\geq 0 : \hat{x}_i(k) = \hat{x}'_i(k) \}  \]
we have, by comparison with a simple random walk,
	\begin{equation} \label{optimal}
	\bbP ( \hat{\t}_x > 3\g N^2 \log N ) \leq N^{-\g} 
	\end{equation} 
for all $\g >0$ and $N$ large enough. 
Moreover, by setting 
	\[ \t_x := \inf \{ k\geq 0 : x_i(k ) = x'_i(k  ) \}  , \]
we see that 
$ x_i(\t_x  ) = x'(\t_x ) = \hat{x} ( \hat{\t}_x ) = \hat{x}'(\hat{\t}_x ) $ 
and 	
	\[ \t_x =  \sum_{k=1}^{\hat{\t}_x -1} G_i , \]
for $(G_i)_{i\geq 1}$ i.i.d.\ Geometric random variables of mean $2$. Let $\g ' = 6(\g +m )$ and $\g '' = \g '/3 $. Recall that $R_n$ denotes the infinite rectangle of height $n$, and assume that $A(0) = A'(0) \supseteq R_n$ for $n =10 \g ' N \log N$. Denote  by $\t_n$ the first time both walkers reach level $n$. 
Then by Lemma~\ref{le:mix} and \eqref{optimal} we have 
	\[ \begin{split} 
	\bbP ( \o_i \mbox{ and } \o '_i & \mbox{ exit } R_n \mbox{ before meeting} ) 
	 = \bbP ( \t_x > \t_n ) 
	= \bbP \Big( \sum_{k=1}^{\hat{\t}_x -1} G_k > \t_n \Big) 
	\\ & 
	\leq \bbP \bigg( \sum_{k=1}^{\hat{\t}_x } G_k > 3\g ' N^2 \log N \bigg) 
	+ \bbP (\t_n < 3 \g ' N^2 \log N ) 
	\\ & \leq \bbP \bigg( \sum_{k=1}^{3 \g '' N^2 \log N} G_k 
	> 3 \g ' N^2 \log N  ) \bigg) + 
	\bbP \big( \hat{\t}_x > 3 \g '' N^2 \log N\big)
	+ N^{-\g ' }
	\\ & \leq \bbP \bigg( \sum_{k=1}^{3 \g '' N^2 \log N  } G_k 
	> 9 \g '' N^2 \log N  \bigg) + 
	 N^{-\g '' } + N^{-\g '}   
	\\ & \leq 
	\bigg[ \frac{\bbE ( e^{\l G_k } ) }{e^{3\l}} \bigg]^{ 3\g '' N^2 \log N } + 2N^{-\g ''}, 
	\end{split}
	\]
where the third inequality follows from \eqref{optimal}.
Finally, taking $\l = \log (3/2) >0$ makes the term in the square brackets equal to $8/9$, from which we conclude that 
	\[ 
	\bbP ( \o_i \mbox{ and } \o '_i  \mbox{ exit } R_n \mbox{ before meeting} ) 
	 \leq N^{-3 \g '' N^2 \log \frac{9}{8}  }  + 2 N^{-\g ''} 
	  \leq N^{-(\g +m)} \]
for $N$ large enough.  
In all, we have found that 
	\[ \begin{split} 
	\bbP ( \exists t \leq T \mbox{ such that } A(t) \neq A'(t) ) 
	& \leq \bbP ( \exists i \leq T \mbox{ such that } 
	\o_i \mbox{ and } \o_i ' \mbox{ exit } R_n \mbox{ before meeting} ) 
	\\ & 
	\leq T N^{-(\g + m ) } \leq N^{-\g } . 
	\end{split}\]
This shows that we can take $C_{\g , m} = 60( \g + m )$ in \eqref{A0} to have \eqref{AT}, thus concluding the proof. 
\end{proof}


\section{The water level coupling} \label{sec:th:main}
In this section we prove Theorem \ref{th:main2}. 
\begin{proof}[Proof of Theorem \ref{th:main2}]
Let $A_0 , \, A'_0 $ be any two clusters in $\Omega$ with $|A_0| = |A'_0| = n_0$ and such that 
	\[ h_0 = \max\{ h(A_0) , h(A'_0)\} \leq N^m . \]
Let $C_{\g +1 , m+1}$ and $b_{\g+1, m+2}$ be defined as in Proposition \ref{pr:coupling} and Theorem~\ref{pr:height_bound} respectively, and define 
	\[ d'_{\g , m} := 2 \max \{ C_{\g +1 , m+1} ,  \, b_{\g+1, m+2} \} . \] 
We build an auxiliary water cluster $W_0$ by adding $t_{\g , m } = h_0 N + d'_{\g , m } N^2 \log N $ particles to the flat configuration $R_0$ according to IDLA rules. Then, since $t_{\g , m} \leq N^{m+2}$, by Theorem~\ref{pr:height_bound} we have 
	\[ 	 \bbP \Big(  W_0 \supseteq R_{\frac{t_{\g , m}}{N} - b_{\g +1 ,m+2}  \log N }
	 \Big) \geq 1-N^{-(\g +1)} \]
for $N$ large enough. In particular, since $d'_{\g , m} \geq C_{\g +1 , m+1} + b_{\g +1 ,m+2}$, this gives 
	\[ 	\bbP \Big(  W_0 \supseteq R_{h_0 + C_{\g +1 ,m+1}  N\log N }
	 \Big) \geq 1-N^{-(\g +1)} , \]
i.e.\ the water cluster is completely filled up to height $h_0 + C_{\g +1 ,m+1}  N\log N$ with high probability. 
Write 
	\[ A_0 = \{ z_1 , z_2 , \ldots , z_{n_0} \} , \qquad 
	A'_0 = \{ z'_1 , z'_2 , \ldots , z'_{n_0} \}  \]
for arbitrary enumerations of the sites in $A_0$, $A'_0$ above level $0$. We 
define two auxiliary processes  $(W(t))_{t\leq n_0} $, $(W'(t))_{t\leq n_0}$ by setting $W(0) = W'(0) = W_0$ and inductively defining for $t\leq n_0$
	\[ W(t) = W(t-1) \cup \{ Z_t \} , \qquad W'(t) = W'(t-1) \cup \{ Z'_t \}, \]
where $Z_t$, $Z'_t$ denote the exit locations from $W(t-1)$, $W'(t-1)$ of simple random walks on $\bbZ_N \times \bbZ$ starting from $z_t$, $z'_t$. These walks are coupled as follows. If $z_t$ is at a lower level than~$z'_t$, then the walk starting from $z_t$ moves freely (independently of everything else) until it reaches the level of $z'_t$, while the other walk stays in place. Once at the same level, the walks move together in the vertical coordinate, whereas the horizontal coordinates evolve according to the \emph{reflection coupling}: if one steps to the left, the other one steps to the right, and vice versa\footnote{Here we again use the first horizontal step of the walks to adjust the parity of the difference of the horizontal coordinates, if needed, as explained in the proof of Proposition \ref{pr:coupling}.}. 
Once the walks meet, they move together in both coordinates.
 Since $n_0 \leq h_0 N \leq N^{m+1}$, and all the walks start at distance at least $C_{\g +1 , m+1}N \log N$ from the boundary of the cluster,  Proposition \ref{pr:coupling} gives\footnote{Although the particles' starting positions were taken to be below level $0$ in Proposition \ref{pr:coupling} for notational convenience, the result applies in this setting by invariance under vertical shifts.}
	\[ \begin{split} 
	\bbP \big( W(n_0) \neq W'(n_0) \big) 
	\leq & \, \bbP \Big( W(n_0) \neq W'(n_0) \Big| W_0 \supseteq  R_{\frac{t_{\g , m}}{N} + C_{\g +1 ,m+1}  \log N } \Big) + \\ & + \bbP \Big( W_0 \nsupseteq  R_{\frac{t_{\g , m}}{N} + C_{\g +1 ,m+1}  \log N } \Big) 
	\leq 2N^{-(\g + 1)} \leq N^{-\g} 
	\end{split} \]
for $N$ large enough. The result then follows by observing that if $(A(t))_{t\geq 0}$ and $(A'(t))_{t\geq 0}$ denote two IDLA processes starting from $A_0$ and $A'_0$ respectively, then 
	\[ A(t_{\g , m} ) \stackrel{(d)}{=} W(n_0) , \qquad 
	A'(t_{\g , m} ) \stackrel{(d)}{=} W'(n_0)  \]
by the Abelian property. Indeed, if we denote by $w_1 , w_2 , \ldots , w_{t_{\g , m}}$ the starting locations of the $t_{\g , m}$ walkers used to grow $W_0$, then, with the notation introduced in Section \ref{sec:Abelian}, we have 
	\[ W(n_0) = ((( R_0 \oplus \{ w_1 \} ) \oplus \{ w_2 \} )\oplus \cdots \oplus 
	\{ w_{t_{\g , m}} \} ) \oplus A_0 , \]
while 
	\[ A(t_{\g , m} ) = (( R_0 \oplus A_0 ) \oplus \{ w_1 \} ) \oplus \{ w_2 \} ) \oplus \cdots \oplus \{ w_{t_{\g , m}} \} .  \]
The claimed equality in law is then given by Proposition \ref{pr:Abelian}. 
\end{proof}
\begin{Remark}
Note that to prove Theorem \ref{th:main2} we have constructed a coupling of the final clusters $A(t_{\g ,m})$, $A'(t_{\g ,m})$, not of the whole processes $(A(t))_{t\geq 0}$, $(A'(t))_{t\geq 0}$. 
\end{Remark}

\section{Logarithmic fluctuations for large clusters} \label{sec:height} 
In this section we bound the fluctuations of an IDLA cluster with polynomially many particles, thus proving Theorem \ref{pr:height_bound}. 
To start with, we claim that the following~holds. 
\begin{Theorem}\label{th:JLS}
Let $(A(t))_{t\geq 0}$ denote an IDLA process on $\bbZ_N \times \bbZ$ starting from the flat configuration $A(0) = R_0$. Fix any $T\leq (N \log N)^2$. 
Then for any $\g >0$ there exists a finite constant $a_\g$, depending only on $\g$, such that 
	\[ \bbP \Big( R_{\frac{T}{N} -a_\g \log N}  \subseteq A(T) \subseteq R_{\frac{T}{N} + a_\g \log N } 
	\Big) \geq 1 - N^{-\g}  \]
for $N$ large enough. 
\end{Theorem}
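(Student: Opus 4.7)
The result is the cylinder-graph analogue of the logarithmic fluctuations theorem of Jerison, Levine and Sheffield for IDLA on $\bbZ^d$. The plan is to transpose their argument to the cylinder $\bbZ_N \times \bbZ$, replacing the Euclidean ball of volume $T$ by the rectangle $R_{T/N}$ and the spherical harmonics used there by Fourier modes on $\bbZ_N$.

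The core of the JLS method is an ``early/late points'' analysis. For each candidate boundary site $z$ near the ideal interface $\{y = T/N\}$, one introduces a martingale that measures the discrepancy between the true cluster and the idealized (rectangular-boundary) cluster. These martingales have the form
\[
\mathcal{M}_z(n) \;=\; \sum_{i=1}^{n} \bigl[h_z(Z_i) - h_z(Z_i^{0})\bigr],
\]
where $Z_i$ and $Z_i^{0}$ are the exit locations of the $i$-th walker from the true cluster $A(i-1)$ and from the idealized rectangle respectively, and $h_z$ is a suitable harmonic test function on the exterior of the ideal rectangle, normalized so that $h_z(z)$ is large while $h_z$ is small elsewhere on the boundary. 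The probability of a large ``earliness'' (the true cluster reaches $z$ well before the idealized one does) or ``lateness'' is then bounded by applying a Bernstein-type concentration inequality to $\mathcal{M}_z$, followed by a union bound over the $\cO(T)$ candidate sites $z$ in a neighbourhood of the ideal interface.

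The cylinder-specific work lies in the construction of the test functions $h_z$ and the control of their increments. I would expand $h_z$ in Fourier modes in the $x$-coordinate: the zero mode is linear in $y$ and controls mean-level fluctuations, while the non-constant modes decay exponentially in $|y|$ at rates proportional to the mode frequency, accounting for horizontal (lateral) fluctuations of order $\cO(\log N)$. Combined with sharp Green's function estimates on the cylinder, which behave like the planar Green's function on scales $\ll N$ and cross over to a one-dimensional profile on scales $\gg N$, these test functions yield martingale increments of size $\cO(1/N)$, which is precisely what makes the Bernstein bound effective in the regime $T \le (N\log N)^2$.

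The main obstacle is to carry out the cylindrical Green's function and harmonic-function estimates with enough precision for the JLS martingale argument to close uniformly in $z$, since the test functions $h_z$ must probe arbitrary boundary sites while preserving the $\cO(1/N)$ increment bound. The cut-off $T \le (N\log N)^2$ arises naturally as the scale at which these increment estimates remain uniformly valid; for larger $T$ the increments blow up, which is precisely why the bootstrap via the Abelian property described in the paragraph following Theorem \ref{pr:height_bound} is required to push the fluctuation bound up to $t \le N^m$.
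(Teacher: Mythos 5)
Your proposal correctly identifies the overall strategy---the Jerison--Levine--Sheffield early/late-points method with discrete harmonic test functions adapted to the cylinder---and your choice of test function (a discrete harmonic $h_z$ with a pole near the candidate boundary site) matches the paper's $H_\zeta(z) = \mathbb{P}_z(\text{SRW reaches level }\zeta_y\text{ for the first time at }\zeta)$.

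There is, however, a genuine gap. A single Bernstein/Azuma-type concentration bound applied to $\mathcal{M}_z$, followed by a union bound over boundary sites, does \emph{not} by itself yield logarithmic fluctuations. The crucial quadratic-variation estimate (Lemma~\ref{le1}: $\mathbb{E}\big(e^{S_\zeta(t)}\mathbf{1}_{\cE_{m+1}[t]^c}\big) \leq e^2 t^{800}$) holds only on the event that there are no $m$-early points, and when $\zeta_y$ is close to the current cluster height the bound degrades to $e^{200m}t^{800}$ (Lemma~\ref{le2}). Starting from the only a priori control available, the Lawler--Bramson--Griffeath outer bound, one has $m_0 \asymp T/N$, so the one-shot concentration estimate gives power-law rather than logarithmic fluctuations. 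The paper therefore runs a bootstrap: Proposition~\ref{pr:early_late} (``no $m$-early implies no $\ell$-late'' with $\ell \asymp \sqrt{m\log N}$) and Proposition~\ref{pr:late_early} (``no $\ell$-late implies no $m'$-early'' with $m' \asymp \ell$, which requires the thin tentacles lemma to rule out an early particle reaching $z$ via a sparse filament) alternate for $O(\log\log N)$ rounds, until $m$ shrinks to $O(\log N)$. This iteration over scales, together with the thin tentacles lemma in the ``late implies early'' direction, is absent from your sketch and is precisely what produces the logarithmic bound; without it the argument does not close.

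Two smaller points. Your martingale $\sum_i [h_z(Z_i)-h_z(Z_i^0)]$, comparing true and idealized exit locations, is indeed a martingale (both walks start from the same distribution, and $h_z$ is harmonic on the relevant region), but the paper instead uses $\sum_n (H_\zeta(B_n(1)) - \tfrac{1}{N})$, which avoids injecting the extra randomness of an independent idealized process into the quadratic variation. And the Fourier-mode decomposition you propose for $h_z$ is a valid alternative, but the paper simply controls $H_\zeta$ by direct random-walk estimates; the hard part of the proof is the iterative scheme, not the harmonic-function estimates, which on the cylinder are in fact simpler than in $\mathbb{Z}^2$.
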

The above theorem is stated for $T=N^2$ in \cite{jerison2014internal2}, where the authors use it to show convergence of space-time averages of IDLA fluctuations to the Gaussian free field. It can be proved by the same arguments used in \cite{jerison2012logarithmic} for planar IDLA, which are in fact rather simplified by the structure of the cylinder graph. 
Since this proof does not appear anywhere, and since we need to extend it to larger values of$~T$, we give it in Appendix~\ref{app:survey}. 

Assuming Theorem \ref{th:JLS}, we can proceed with the proof of Theorem \ref{pr:height_bound}.



\begin{proof}[Proof of Theorem \ref{pr:height_bound}]
It suffices to show that \eqref{eq:height_bound} holds for any fixed $T \leq N^m$. The result will then follow by replacing $\g$ with $\g+m$ and using the union bound over all $T\leq N^m$.

If $T = \cO (N^2 \log N )$ then we are done by Theorem \ref{th:JLS}, so assume $T\gg N^2 \log N$. We are going to iteratively reduce the size of the cluster, until it becomes $\cO (N^2 \log N)$. To this end, 
let $a_{\g+2+m}$ and $C_{\g+2 , m}$ denote the constants in Theorem \ref{th:JLS} and Proposition \ref{pr:coupling} respectively, and set $a:= a_{\g+2+m}$, $c:= C_{\g+2 , m}$ for brevity. At cost of increasing these constants, we can assume that $2c \geq 1$ and both $a \log N$ and $c  \log N$ are integers. Define 
	\[ n:= a N \log N + 2c N^2 \log N . \]
We build a large cluster, with the same law of $A(T)$, by using the water processes mentioned in the introduction. To this end, let $W_1$ denote the cluster obtained by adding $n$ particles to $A(0)=R_0$ according to IDLA rules. Then by Theorem \ref{th:JLS} 
	\begin{equation}\label{eq:block1}
	 \bbP \big( R_{2cN\log N} \subseteq W_1 \subseteq R_{2a \log N + 2cN\log N} \big)
	  \geq 1-N^{-(\g+2+m)} 
	 \end{equation}
for $N$ large enough. Let  
	\[ W^f_1 := W_1 \cap R_{2cN\log N} \]
denote the region which is filled with high probability after $n$ releases.  Write further  
	\[ F_1 := W_1 \setminus W_1^f \]
for the fluctuation region. Water particles in the fluctuation region are declared frozen, and they will be released at a later time.
On top of the water-filled region $W_1^f$ we are going to build a second cluster with again $n$ particles, so to fill a rectangle of height $4cN\log N$ with high probability.  Let $W_2$ denote such a cluster,  obtained by adding $n$ particles to $W_1^f$ according to IDLA rules (thus, new water particles can, and will, settle inside $F_1$). Write $W_2^f$ and $F_2$ for the filled region and fluctuation region of such cluster. Then, as in~\eqref{eq:block1}, we have 
	\[ \bbP \Big( R_{4cN\log N} \subseteq 
	W_1^f \cup W_2 \subseteq R_{2a N \log N + 4cN\log N} \Big) \geq 1-2N^{-(\g +2+m)} . \]
We again declare particles in $F_2$ frozen, and treat their locations as empty for subsequent walkers. This procedure is iterated for $k= \lfloor T/n \rfloor -1$ rounds.

Let $\Omega_1$ denote the event that the fluctuations bound \eqref{eq:block1} holds for all $k $ rounds, that is 
	\[ \Omega_1 := \big\{ W_1^f \cup W_2^f \cup \ldots \cup W_k^f  = R_{2kcN\log N} \} 
	\cap \bigcap_{l=1}^k \big\{ F_l \subseteq 
	R_{2a\log N + 2lcN\log N} \setminus  R_{2lcN\log N} \big\} . \]
Then by \eqref{eq:block1}
	\[ \bbP ( \Omega_1 ) \geq 1-kN^{-(\g +2+ m)} \geq 1-N^{-(\g +2 )} .\]
\begin{figure}[h!]
  \centering
    \includegraphics[width=.9\textwidth]{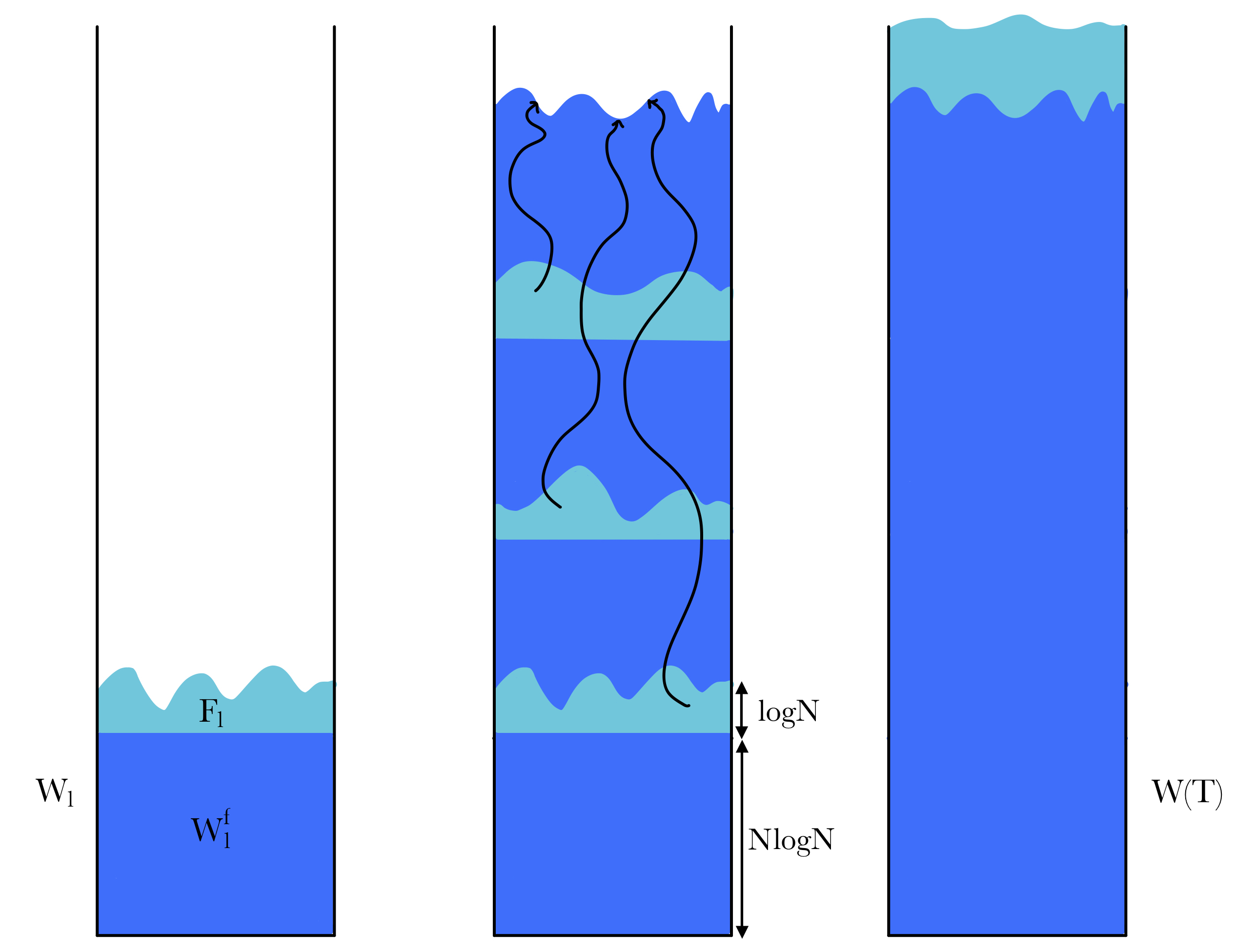}
    \caption{Sketch of the proof of Theorem \ref{pr:height_bound}.}
\end{figure}
Let us restrict to this good event. It remains to release the $akN\log N$ frozen particles from their locations, plus $T- k n$ new ones uniformly from level~$0$. Denote by $W(T)$ the cluster obtained after all the released particles have settled, so that $|W(T)|=T$. By the Abelian property, $W(T)$ has the same distribution as $A(T)$. 
Write $T' = T-kn + akN\log N $ for brevity, 
and let $W'(T')$ denote the cluster obtained by adding $T'$ particles to the filled rectangle $R_{2kcN\log N}$ according to IDLA rules (more precisely, we start $T'$ random walks  uniformly from level zero, independently of everything else, and add their exit locations to the cluster). We argue that we can couple $W(T)$ and $W'(T')$ so that they coincide with high probability. To see this, we proceed as follows. First release $T-kn$ new particles uniformly from level $0$, and note that, since $T-kn \geq n$, on the event $\Omega_1$ these will fill a rectangle of height $2cN\log N$ with probability at least $1-N^{-(\g +2+m)}$. If this happens, then all the frozen particles are at distance at least $cN\log N$ from the boundary of the cluster. Thus by Proposition \ref{pr:coupling} we can couple $W(T)$ and $W'(T')$ so that 
	\[  \bbP \Big( W(T) = W'(T') \Big) 
	\geq 1- \bbP (\Omega_1) - N^{-(\g +2+m)} - N^{-(\g +2) } \geq 1 - N^{-(\g +1 )} . \]
In all, we have reduced the problem of bounding the fluctuations of $W(T)\stackrel{(d)}{=}A(T)$ to the same one for the smaller cluster 
	\[ A(T') \stackrel{(d)}{=} W'(T') \setminus R_{2kcN\log N} , \]
at the price of a small probability of failure. Since $T' \leq 2 \max\{ 2n , aT/N \}$, this either makes the number of particles $\cO ((N \log N )^2)$, in which case we stop, or it decreases it by a multiplicative factor $2a/N$. Thus after at most $m$ iterations of the above procedure we are back to clusters with $\cO ((N \log N)^2) $ particles, which we know to have logarithmic fluctuations. This shows that 
	\[ 	 \bbP \Big(  R_{\frac{T}{N} - a \log N }
	\subseteq  A(T) \subseteq
	R_{\frac{T}{N} + a  \log N  } \Big) \geq 1-m N^{-(\g +1 ) }
	\geq 1-N^{-\g} , \]
so \eqref{eq:height_bound} holds with $b_{\g , m} = a$, as wanted.

\end{proof}

\section{Typical profiles are shallow} \label{sec:shallow}
In this section we show that typical IDLA profiles have at most logarithmic height, thus proving Theorem \ref{pr:typical_height}.  This is achieved by combining Theorem~\ref{th:main2} with a control on the density of stationary clusters. 

\subsection{Decay of the excess height} \label{sec:decayE}
We distinguish between high and low density clusters by looking at their excess height, that we now define. 
Recall that $\Omega$ denotes the set of clusters completely filled up to level $0$, so that  $h(A) \geq 0$ for all $A\in \Omega$, while $|A|$ denotes the total number of sites in $A$ strictly above level $0$.  
\begin{Definition}[Excess height]
For $A\in\Omega$, the \emph{excess height} of $A$, denoted by $\cE (A)$,  is defined as the difference between the height of $A$ and the minimum possible height, i.e.,\ 
	\[ \cE(A) := h(A) - \frac{|A|}{N}  . \] 
\end{Definition}
Note that $\cE(A) \geq 0 $. We say that a cluster $A$ has high density if $ \cE (A) \leq \cE^*$, where $\cE^* $ is a constant to be chosen later depending only on $N$. If instead $\cE (A) > \cE^*$, then $A$ is said to have low density. 

\begin{figure}[h!]
  \centering
    \includegraphics[width=0.45\textwidth]{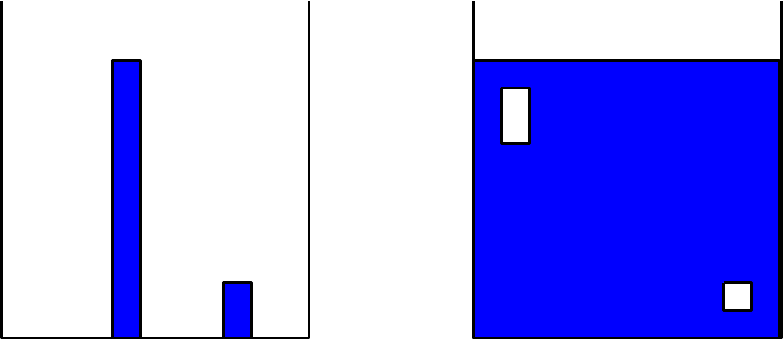}
    \caption{Clusters with high excess height (left) and low excess height (right).} 
\end{figure}

To start with, we prove that, for a suitable choice of $\cE^*$, the excess height drops below~$\cE^*$ quickly under the IDLA dynamics.

\begin{Lemma}\label{le:excess_height}
Let $(A(t))_{t\geq 0}$ denote an IDLA process on $\bbZ_N\times \bbZ$ with $A(0) \supseteq R_0$. For $t\geq 0$ let $\cE(t) := \cE(A(t))$ denote the excess height of $A(t)$. 
Then for any $\eta \in (0,1)$ there exists a constant $\cE^* = \cE^* (N, \eta )$, depending only on $N$ and $\eta$,  such that if 
	\[ T_{\cE^*}:= \inf \{ t\geq 0 : \cE (t) \leq   \cE^* \} \]
denotes the first time that the excess height drops below $\cE^*$, then 
	\[ \bbP( T_{\cE^*} >t ) \leq  e^{-\frac{\eta^2}{8 N^2} t} \,,\]
for $t> \frac{N}{\eta} (\cE(0) - \cE^*)$ and $N$ large enough.
\end{Lemma}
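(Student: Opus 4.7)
The strategy is to show that $\cE(t)$ has a strictly negative conditional drift on $\{\cE(t-1)>\cE^*\}$, and to conclude by applying Azuma--Hoeffding to an appropriate stopped, drift-corrected supermartingale.

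\emph{Step 1 (one-step dynamics).} Since each IDLA step adds exactly one particle, $|A(t)|=|A(t-1)|+1$, while the height either stays put or grows by exactly $1$ (the walker cannot skip a level in a single jump, and since level $h(A(t-1))+1$ is entirely empty it must settle there upon first arrival). Writing $p_t$ for the conditional probability of a height increase given $\cF_{t-1}$, this gives
\[
\cE(t)-\cE(t-1) \;=\; \one\{\text{height extends at step }t\} - \tfrac{1}{N} \;\in\;\bigl\{-\tfrac{1}{N},\,\tfrac{N-1}{N}\bigr\},
\]
so the conditional drift equals $p_t - 1/N$.

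\emph{Step 2 (negative drift).} This is the heart of the proof: the claim is that $\cE^*=\cE^*(N,\eta)$ can be chosen so that $\cE(A)>\cE^*$ forces $p_t \leq (1-2\eta)/N$, yielding a drift of at most $-2\eta/N$ on $\{T_{\cE^*}>t-1\}$. Intuitively, if $\cE(A)$ is large then $A$ harbors $N\cE(A)$ holes strictly below level $h(A)$, and a walker launched uniformly on level $0$ is absorbed at one of them with overwhelming probability before reaching level $h(A)+1$. I would prove this quantitatively by combining (i) the rapid horizontal mixing on $\bbZ_N$ (in the spirit of Lemma~\ref{le:mix}), which makes the walker's horizontal position at each intermediate level essentially uniform after sufficient vertical travel, with (ii) a local-time estimate on the vertical coordinate controlling the expected number of visits to each level $k\in[1,h(A)]$ before the walker exits. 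The resulting product of per-level survival probabilities $\prod_k(1-H_k/N)^{\#\text{visits}}$ decays with $\sum_k H_k = N\cE(A)$, yielding a bound of the form $p_t \leq C\,e^{-c\,\cE(A)}$; then $\cE^*=\cE^*(N,\eta)$ chosen large enough in $N$ and $\eta$ forces $p_t \leq (1-2\eta)/N$.

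\emph{Step 3 (Azuma).} With the drift bound in hand, set $\tau:=T_{\cE^*}$ and
\[
M_t \;:=\; \cE(t\wedge\tau) + \tfrac{2\eta}{N}\,(t\wedge\tau).
\]
By Step 2, $(M_t)$ is a supermartingale whose per-step increments are bounded in absolute value by $2$. On $\{\tau>t\}$ one has $\cE(t)>\cE^*$, so for $t>N(\cE(0)-\cE^*)/\eta$,
\[
M_t-M_0 \;\geq\; \tfrac{2\eta}{N}\,t - (\cE(0)-\cE^*) \;\geq\; \tfrac{\eta}{N}\,t,
\]
and Azuma--Hoeffding then yields
\[
\bbP(\tau>t) \;\leq\; \exp\!\Bigl(-\frac{(\eta t/N)^2}{2\cdot 4\cdot t}\Bigr) \;=\; \exp\!\Bigl(-\frac{\eta^2 t}{8N^2}\Bigr),
\]
as required. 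The main obstacle is Step 2: Steps 1 and 3 are essentially bookkeeping, but producing an estimate on $p_t$ that depends only on $\cE(A)$ (uniformly over how the $N\cE(A)$ holes are distributed inside $A$) requires a genuine harmonic-measure analysis of simple random walk in the Swiss-cheese domain $A$.
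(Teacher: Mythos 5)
Your overall architecture — one-step drift bound, drift-corrected stopped supermartingale, Azuma — matches the paper's, and Steps 1 and 3 are correct bookkeeping. Two issues remain, one minor and one substantive.

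\emph{Minor: the $2\eta$ normalization.} You require the conditional probability of a height increase to satisfy $p_t \leq (1-2\eta)/N$ so that the drift of $\cE$ is $\leq -2\eta/N$. Since $p_t \geq 0$, this is impossible once $\eta \geq 1/2$, so your proof does not cover the full range $\eta \in (0,1)$ in the statement. The paper works with the weaker requirement $p_t \leq (1-\eta)/N$, i.e.\ drift $\leq -\eta/N$, which is achievable for all $\eta \in (0,1)$; the price is a slightly less clean pass through Azuma (one absorbs the constant into the final exponent rather than getting $\eta t/N$ on the nose). Your normalization buys a cleaner Azuma step, but only on $\eta < 1/2$.

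\emph{Substantive: Step 2 is the theorem, and your sketch overstates what it delivers.} You correctly identify the drift estimate as the heart of the matter, but the plan you outline --- horizontal mixing combined with local-time estimates on the vertical coordinate, producing $\prod_k (1-H_k/N)^{\#\text{visits}}$ and hence $p_t \leq C e^{-c\,\cE(A)}$ --- cannot give an absolute constant $c$ in the exponent, and is not how the paper proceeds. The paper's Lemma~\ref{le:towards_height} is structurally simpler and avoids controlling local times altogether: since $\cE(A) > \cE^*$ forces at least $\cE^*$ distinct levels to contain a hole, one selects roughly $\cE^*/n^*$ of these \emph{bad levels} spaced $n^* \approx 20 N\log N$ apart. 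By Lemma~\ref{le:mix}, the walker's horizontal coordinate is close to uniform upon \emph{first} reaching each selected bad level, so it is trapped there with probability at least $1/(2N)$, and the first-visit events at widely separated levels chain multiplicatively, giving $p_t \leq (1-\tfrac{1}{2N})^{\cE^*/n^*} \approx e^{-\cE^*/(40 N^2 \log N)}$. Note the rate is $\cO\big(1/(N^2\log N)\big)$, not an absolute $c$; your claimed $e^{-c\,\cE(A)}$ with $c$ uniform in $N$ is too strong. Your proposed product over \emph{all} visits to \emph{all} levels also has a correlation problem (successive visits to the same level are not independent and the horizontal position has not re-mixed), which the spaced-bad-levels device is specifically designed to sidestep. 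For the lemma you only need $p_t \leq (1-\eta)/N$, which the spaced-bad-levels bound gives once $\cE^*$ is of order $N^2 \log N \cdot \log(N/(1-\eta))$, and this weaker target is what should replace the exponential claim in your Step 2.
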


Lemma \ref{le:excess_height} is an easy consequence of the following result, which tells us that if a cluster has low density then its excess height has a negative drift under the IDLA dynamics.

\begin{Lemma}\label{le:towards_height}
Let $\cF_t := \sigma \{ A(s) : s\leq t \}$, and write $h(t)$ in place of $h(A(t))$ for brevity. Then for all $\eta \in (0, 1 )$ there exists a constant $\cE^* = \cE^* (N ,\eta )$, depending only on $N$ and $\eta$, such that if  $\cE (t) >  \cE^* $ then  it holds 
	\[ \bbE \big( h(t+1) - h(t) | \cF_t \big) < \frac{1-\eta }{N} \, \]
for $N$ large enough.
\end{Lemma}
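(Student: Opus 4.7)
Since exactly one particle is added per step, $h(t+1)-h(t) \in \{0,1\}$, and $h(t+1) = h(t)+1$ if and only if the new particle settles at level $h(t)+1$. Writing $h = h(t)$ and $k_y = N - c_y$ for the number of holes at level $y \in \{1, \ldots, h\}$ (so that $\sum_y k_y = N \cE(t)$), the task reduces to showing that
\[
p_h \;:=\; \bbP(Z_{t+1}\text{ at level } h+1 \mid \cF_t)
\]
is strictly less than $(1-\eta)/N$ whenever $\cE(t) > \cE^*$, for a suitable $\cE^* = \cE^*(N,\eta)$.

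My plan is to interpret $p_h$ as an averaged harmonic measure and then reduce to a one-dimensional killed random walk. Set $u(x,y) = \bbP_{(x,y)}(\text{walker first exits the cluster at level } h+1)$: then $u$ is harmonic on occupied sites, $u \equiv 1$ on level $h+1$, $u \equiv 0$ on the holes, and $p_h = \frac{1}{N}\sum_x u(x,0)$. Projecting the walker's trajectory onto the vertical coordinate and invoking the horizontal mixing estimate of Lemma~\ref{le:mix}, the walker's horizontal position at each visit to a given level is approximately uniform on $\bbZ_N$, so the probability of being killed at level $y$ per visit is approximately $k_y/N$. Under this reduction $p_h$ is well approximated by the survival probability $S(0)$ of a simple random walk on $\{0,1,\ldots,h+1\}$, reflecting at $0$ and absorbing at $h+1$, with killing rate $k_y/N$ at each level $y$.

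To bound $S(0)$, I would use the explicit Green's function $G(0,y) = 2(h+1-y)$ for the reflected walk on $\{0,\ldots,h+1\}$, together with a Markov-type argument on the number of killing opportunities. A direct calculation (verified in the single-level case by the explicit formula $S(0) = (1-K)/(1 + K(2(h+1-y_0)-1))$ and extended to multiple killing levels by iterating the one-step analysis) yields
\[
S(0) \;\le\; \Big(1 + \tfrac{2}{N}\sum_{y=1}^{h} k_y (h+1-y)\Big)^{-1}.
\]
Since $h+1-y \geq 1$ for every $y \leq h$ and $\sum_y k_y = N\cE(t)$, the denominator is at least $1 + 2\cE(t)$, giving $p_h \leq (1 + 2\cE(t))^{-1}$. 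Choosing $\cE^*(N,\eta) := \lceil N/(2(1-\eta))\rceil$ (possibly enlarged by a factor polynomial in $N$ to absorb the approximation error in the 1D reduction) then ensures $p_h < (1-\eta)/N$ whenever $\cE(t) > \cE^*$.

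The main obstacle will be controlling the error in the 1D reduction: on its first few visits to a given level, the walker's horizontal coordinate need not yet be uniform, so the per-visit killing probability can deviate from $k_y/N$; moreover, non-uniform hole placement within a level can shift the effective killing rate depending on the direction of entry. I expect this error to be controllable via Lemma~\ref{le:mix} combined with a coupling between the walker's true trajectory and one whose horizontal coordinate is forced to re-randomise at every vertical step, at the cost of enlarging $\cE^*$ by at most a polynomial factor in $N$. A secondary concern is the regime where holes cluster very close to the top of $A(t)$, so that the Green's function weights $h+1-y$ are small; but in that regime $\cE(t)$ is itself automatically bounded, since each level contributes at most $(N-1)/N$ to $\cE$, so the hypothesis $\cE(t) > \cE^*$ with $\cE^*$ large effectively rules it out.
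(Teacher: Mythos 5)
Your reduction to a one-dimensional killed walk, with per-visit killing rate $k_y/N$ at level $y$, is the crux of the argument, and it is exactly where the proposal breaks down. The walker typically visits the same level many times in quick succession (oscillating up and down), and between consecutive visits its horizontal coordinate changes by only $\cO(1)$ steps. So the killing events across those visits are far from independent Bernoulli$(k_y/N)$: a walker that happens to sit on the far side of $\bbZ_N$ from the holes at level $y$ will avoid them on visit after visit. The per-visit killing probability is therefore not $k_y/N$, and the Green's-function/Markov calculation built on that assumption does not give a valid upper bound on the survival probability. Your proposed fix — couple to a walker whose horizontal coordinate is re-randomised at every vertical step — does not obviously go in the right direction either: re-randomisation at every step makes the walker's exposure to holes more symmetric, but does not give a monotone comparison of the survival probabilities; in particular, one cannot conclude that the true walker is at least as likely to be killed.

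The paper gets around precisely this obstacle by not trying to extract any killing from closely spaced levels. It calls a level \emph{bad} if it has at least one hole, notes that $\cE(t)>\cE^*$ forces at least $\cE^*$ bad levels, and then selects a sub-collection of bad levels spaced at least $n^*\sim N\log N$ apart. By Lemma~\ref{le:mix}, by the time the walker has travelled $n^*$ levels it has mixed in the horizontal coordinate, so at each selected bad level the probability of landing on a hole on its \emph{first} arrival there is at least $1/(2N)$, uniformly over the starting position. This gives the honest bound $(1-\tfrac{1}{2N})^{\cE^*/n^*}$ and forces $\cE^*\gtrsim N^2\log^2 N$, rather than the $\cE^*\sim N$ your reduction promises. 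So the gap is not a ``polynomial-error correction'' to your scheme; the correlation between visits changes both the mechanism of the estimate and the order of magnitude of $\cE^*$.

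As a secondary point, even granting the 1D reduction, the inequality $S(0)\le\bigl(1+\tfrac{2}{N}\sum_y k_y(h+1-y)\bigr)^{-1}$ is asserted from the single-level case ``by iterating the one-step analysis,'' but you give no argument for why the multi-level survival probability satisfies this product/harmonic-mean type bound. The killing at different levels is not independent (the trajectory that survives level $y_1$ is conditioned, which affects its law at level $y_2$), so this step would also need justification.
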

\begin{proof}
Fix $\eta \in (0,1)$ throughout. 
For $k\leq h(t)$, we say that level $k$ is \emph{bad} if it contains at least one empty site, that is if $A(t)^c \cap \{ y=k\} \neq \emptyset$. 
We claim that if $\cE (t) \geq  \cE^*$ then there are at least $\cE^*$ bad levels between $0$ and the top one $h(t)$. Indeed, since $h(t) \geq \frac{|A(t)|}{N} +\cE^*$ then there are at least $\frac{|A(t)|}{N} +\cE^*$ levels above level $0$ in the cluster, and at most $\big\lfloor \frac{|A(t)|}{N} \big\rfloor$ of them can be completely filled. 

Recall from \eqref{t_mix} the definition of $\t_N(N^{-\g})$, and note that by \eqref{t_epsilon} and  Lemma \ref{le:mix} with $\g =2 $ we have 
	\[  \max_{x\in \bbZ_N} \bbP_{(x,0)} ( \tau_n^y < \tau_N (N^{-2}) ) 
	\leq \max_{x\in \bbZ_N} \bbP_{(x,0)} ( \tau_n^y < 6 N^2 \log N ) \leq N^{-2} \] 
as long as $n\geq n^*:= 20 N \log N$. 
Then, if $\o = (x(k) , y(k) )_{k\geq 0}$ is a simple random walk on $\bbZ_N\times \bbZ$, the above implies that 
	\[ \min_{x,x'\in \bbZ_N}  \bbP_{(x,0)} \big( y (\tau_{n^*}^y ) = (x',n^* ) \big) 
	\geq \frac{1}{N} - \frac{1}{N^2} 
	\geq \frac{1}{2N} \]
for $N$ large enough. 
In words, a simple random walk on $\bbZ_N \times \bbZ$ starting at level zero has probability at least $1 /2N$ to reach  level $n^*$ for the first time at any given vertex, uniformly over the starting location. Now, 
since there are at least $\cE^*$ bad levels, we can find at least $\frac{\cE^*}{n^*}$ bad levels at distance at least $n^*$ from each other. We treat these as traps. More precisely, for a new particle to increase the height of the cluster $A(t)$, the particle must travel through all the bad levels without exiting the cluster, until it reaches the top. Since we are taking the bad levels sufficiently far apart, the particle has time to mix in between, so whenever it reaches a bad level for the first time it has probability at least $\frac{1}{2N}$ to fall outside the cluster. By taking enough bad levels, then, we can make the probability for the particle to survive all of them arbitrarily small. 
\begin{figure}[h!]
  \centering
    \includegraphics[width=0.25\textwidth]{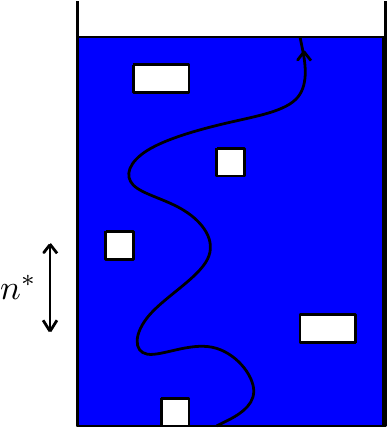}
    \caption{An example of the trajectory of a particle surviving all the bad levels at distance at least $n^*$ one from the other. } 
\end{figure}

Formally, since the walker has probability at least $\frac{1}{2 N}$ to exit the cluster upon reaching a bad level for the first time, we have 
	\begin{equation} \label{chooseC}
	 \bbP (h(t+1) - h(t) =1 | \cF_t ) \leq \Big( 1-\frac{1}{2N}  \Big)^{\cE^*/ n^*}  \,.
	 \end{equation}
To make this smaller than $\frac{1-\eta}{N}$ it suffices to take $\cE^*$ large enough, precisely 
	\begin{equation}\label{eq:E*}
	\cE^* \geq \bigg\lceil 2 n^* N \log \Big( \frac{N}{1-\eta } \Big)  \bigg\rceil , 
	\end{equation}
which concludes the proof.
\end{proof}

Note in particular that 
	\begin{equation} \label{e*}
	 \cE^* > 40 (N \log N)^2 , 
	 \end{equation}
which will be useful later on. We can now prove that low density clusters tend to decrease their excess height under IDLA dynamics. 

\begin{proof}[Proof of Lemma \ref{le:excess_height}]
Let $n_0 = |A(0) |$ denote the number of sites in $A(0)$ of positive height, and assume that $\cE(0) = h(0) - n_0 /N > \cE^*$, with $\cE^*$ as in \eqref{eq:E*}. 
It follows from Lemma~\ref{le:towards_height} that 
	\[ M(t) := h(t) - \frac{n_0 +(1-\eta)t}{N} , \quad t\geq 0\] 
is a supermartingale up to the stopping time $T_{\cE^*}$, with $M(0) = \cE(0)$. 
As a consequence, the stopped process $M^*(t) = M(t\wedge T_{\cE^*})$ is a supermartingale for all $t\geq 0$, with $M^*(t) = M(t)$ for $t\leq T_{\cE^*}$, and 
	\[ |M^*(t+1) - M^*(t) | 
	\leq  |h(t+1) - h(t) | + \frac{1-\eta}{N} \leq 2 \]
for all $t\geq 0$.  
Now, since  $|A(t)|= n_0+t$, for $t< T_{\cE^*}$ we have
	\[ 
	\{ \cE (t) >\cE^* \}  = \Big\{ h(t) - \frac{n_0+t}{N} > \cE^* \Big\} 
	 \subseteq\Big\{ M(t) >\cE^* + \frac{\eta}{N} t \Big\} 
	 = \Big\{  M^*(t) >\cE^* + \frac{\eta}{N} t \Big\} .
	\]
It thus follows from Azuma's inequality that, for $t> \frac{N}{\eta} (\cE(0) - \cE^*)$, 
	\[\begin{split} 
	 \bbP(T_{\cE^*} >t) & \leq \bbP \big( T_{\cE^*} >t , \cE(t) > \cE^* \big) 
	 \leq \bbP \big( T_{\cE^*} >t , M^*(t) >\cE^* + \frac{\eta}{N} t \big) \\ & 
	 \leq \bbP \Big( M^*(t) - M^*(0) > \cE^* + \frac{\eta}{N} t  - M(0) \Big) \\
	& \leq \exp \Big\{ - \frac{ ( \cE^* + \eta t /N -M(0))^2}{8t} \Big\} 
	\\ & \leq \exp \Big\{ - \frac{ ( \cE^*  - \cE(0) )^2+ (\eta t /N)^2 }{8t} \Big\} 
	\\ & \leq \exp \Big\{ - \frac{\eta^2}{8 N^2} t \Big\} \, , 
	\end{split} \]
as claimed. 
\end{proof}

\begin{Remark} \label{remark_pr} 
Lemma \ref{le:excess_height} implies that Shifted IDLA is positive recurrent, thus proving the existence of the stationary distribution $\mu_N$. To see this it suffices to show, for example, that the flat configuration $R_0$ is positive recurrent. 
Let $ (A (t))_{t\geq 0}$ be an IDLA process with $A (0) = R_0$, and define  $T_0$ to be the first time $t$ such that $A (t) = R_k$ for some $k\geq 1$. Here is a wasteful way to show that $\bbE_{R_0} (T_0) <\infty$. Fix any $\eta \in (0,1)$ and let $\cE^*=\cE^*(N, \eta )$ be the integer constant in Lemma \ref{le:excess_height}. Starting from $R_0$, release $\cE^* N$ particles: with probability at least $N^{-\cE^* N}$ the final configuration will be the filled rectangle $R_{\cE^*}$. If not, then the excess height has increased by at most $\cE^* N$. We keep releasing particles until the excess height falls again below $\cE^* $: by Lemma \ref{le:excess_height} this takes a random time with exponential tails, and hence finite expectation. Once the excess height is at most $\cE^*$, there are at most $\cE^* N$ empty sites below the top level in the cluster. We release as many particles as the number of the empty sites below the top level: with probability at least $N^{-\cE^* N}$ the final configuration will be a filled rectangle $R_k$ for some $k\geq 1$. If not, the excess height is at most $\cE^* + \cE^* N$: we again wait for it to fall below $\cE^*$, and iterate. After at most a Geometric$\big(N^{-\cE^* N} \big)$ number of attempts, the final configuration will be a filled rectangle. Each attempt takes $\cE^* N$ releases, plus the time it takes for the excess height to fall below $\cE^* $ starting from $\cE^* +\cE^* N$, which has exponential tails. 
In all, we conclude that the total time it takes to go from $R_0$ back to a flat configuration $R_k$ for some $k\geq 1$ has finite expectation. 
\end{Remark}

\subsection{Typical clusters are dense}
In this section we show that, for $N$ large enough, $\mu_N$ gives high probability to high density clusters. This shows that stationary clusters are dense, and hence that typical clusters are dense, with high probability.

\begin{Proposition}\label{pr:initial_excess}
For $\cE^*=\cE^*(N , \eta  )$ as in 
\eqref{eq:E*} and $N$ large enough, it holds 
	\[ \mu_N \Big( \big\{ A :  \cE (A) > 2 \cE^*  \big\} \Big)  \leq e^{-N/2}  . \]
\end{Proposition}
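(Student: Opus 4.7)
The plan is to promote the one-step drift bound from Lemma~\ref{le:towards_height} into an exponential stationary tail bound via a Foster--Lyapunov argument. The starting observation is that the excess height is invariant under the shift map $\cS$: since $h(\cS(A)) = h(A) - k_A$ and $|\cS(A)| = |A| - k_A N$, one has $\cE(\cS(A)) = \cE(A)$. Consequently the one-step transition of $\cE$ under Shifted IDLA coincides with its transition under ordinary IDLA, so Lemma~\ref{le:towards_height} applies to the Shifted chain verbatim.

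Fix $\eta \in (0,1)$ and let $\cE^* = \cE^*(N,\eta)$ be the constant in \eqref{eq:E*}. Writing $A \mapsto A'$ for the one-step update of Shifted IDLA starting at $A$, note that $\cE(A') - \cE(A) \in \{-1/N,\,1-1/N\}$, with the positive value occurring with probability $p(A) := \bbP(h(A')=h(A)+1 \mid A)$. I would introduce the Lyapunov function $V(A) := \theta^{N\cE(A)}$, where $\theta := 1 + a/N$ with $a = a(\eta) > 0$ small. A direct computation yields
\[ \bbE\bigl[V(A') \mid A\bigr] \;=\; V(A)\,\theta^{-1}\bigl[\,1 + p(A)(\theta^N - 1)\,\bigr]. \]
On $\{\cE > \cE^*\}$ Lemma~\ref{le:towards_height} gives $p(A) < (1-\eta)/N$; since $\theta^N \to e^a$ as $N\to\infty$, a Taylor expansion around $a=0$ shows that for $a$ small enough depending only on $\eta$ one has the Foster--Lyapunov contraction
\[ \bbE[V(A') \mid A] \;\leq\; \bigl(1 - c/N\bigr)\,V(A) \qquad \text{whenever } \cE(A) > \cE^*, \]
for some $c = c(\eta) > 0$. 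On the complementary set $\{\cE \leq \cE^*\}$ the crude deterministic bound $V(A') \leq \theta^{N-1} V(A) \leq \theta^{N\cE^* + N}$ will suffice.

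Stationarity of $\mu_N$ gives $\int V \, d\mu_N = \int \bbE[V(A')\mid A]\, d\mu_N(A)$. Splitting the integral according to whether $\cE(A) \leq \cE^*$ or $\cE(A) > \cE^*$ and inserting the two bounds produces
\[ \int V \, d\mu_N \;\leq\; \theta^{N\cE^* + N} + \bigl(1 - c/N\bigr) \int V \, d\mu_N, \]
whence $\int V\, d\mu_N \leq (N/c)\,\theta^{N\cE^* + N}$. Markov's inequality combined with $V \geq \theta^{2N\cE^*}$ on $\{\cE > 2\cE^*\}$ then gives
\[ \mu_N(\cE > 2\cE^*) \;\leq\; \frac{N}{c}\,\theta^{-N\cE^* + N}. \]
Since $\theta^{N\cE^*} \geq \exp(c'\cE^*)$ for some $c' = c'(\eta) > 0$, and \eqref{e*} ensures $\cE^* \geq 40(N\log N)^2 \gg N$, the right-hand side is easily below $e^{-N/2}$ for $N$ large enough. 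The main technical subtlety is that the up-increments of $\cE$ have size $\approx 1$ while the down-increments have size $1/N$, so $\cE$ is not a standard biased random walk; this dictates the scaling $\theta = 1 + a/N$, chosen so that $\theta^N$ remains $O(1)$, and $a$ must then be tuned in terms of $\eta$ to keep a strictly positive contraction gap $c$. Beyond this calibration, the enormous reservoir $\cE^* \gg N$ provided by \eqref{e*} makes the quantitative decay at rate $e^{-N/2}$ very comfortable.
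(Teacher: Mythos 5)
Your proof takes a genuinely different route from the paper. The paper applies the ergodic theorem to the bounded observable $\mathbf{1}\{\cE > 2\cE^*\}$ and controls the long-run fraction of time spent above $2\cE^*$ via an excursion decomposition: Lemmas \ref{le:Kt} and \ref{postpone} bound how rarely the excess height climbs from near $\cE^*$ up to $2\cE^*$, and Lemma \ref{le:excess_height} bounds how quickly it falls back. You instead promote the one-step drift of Lemma~\ref{le:towards_height} to an exponential Foster--Lyapunov function $V(A) = \theta^{N\cE(A)}$ with $\theta = 1 + a/N$, and then apply Markov's inequality at stationarity. The shift-invariance of $\cE$, the one-step drift computation, the calibration of $a$ in terms of $\eta$, and the closing numerics (using $\cE^* \gg N$ from \eqref{e*}) are all correct. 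Conceptually this trades the paper's renewal-reward bookkeeping for a one-line Markov bound, which is appealing.

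There is however one genuine gap: the step ``whence $\int V\,d\mu_N \leq (N/c)\theta^{N\cE^*+N}$'' requires knowing a priori that $\int V\,d\mu_N < \infty$. Stationarity gives the identity $\int V\,d\mu_N = \int \bbE[V(A')\mid A]\,d\mu_N(A)$ only as an equality of values in $[0,\infty]$, and if both sides were infinite you could not cancel the $(1-c/N)\int V\,d\mu_N$ term. Moreover, finiteness of $\int V\,d\mu_N$ is itself an exponential-tail statement about $\cE$ under $\mu_N$, i.e.\ a form of the conclusion, so as written the argument is circular at exactly this point; the paper's ergodic-theorem route avoids it entirely by never integrating an unbounded function against $\mu_N$. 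The gap is fixable by a standard device: start the shifted chain at $R_0$, iterate the drift inequality $\bbE[V(A^*(n+1))] \leq (1-c/N)\bbE[V(A^*(n))] + \theta^{N\cE^*+N}$ to get $\sup_n \bbE[V(A^*(n))] \leq 1 + (N/c)\theta^{N\cE^*+N}$, then apply the ergodic theorem to the bounded truncation $V\wedge M$ (so that the time-average of $\bbE[(V\wedge M)(A^*(k))]$ converges to $\int (V\wedge M)\,d\mu_N$ by bounded convergence) and let $M\to\infty$ by monotone convergence. With that patch inserted before the Markov step, the argument is complete.
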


\begin{proof}
Recall that for $(A(t))_{t\geq 0}$ IDLA process on $\bbZ_N\times \bbZ$, we denote by $(A^*(t))_{t\geq 0}$ the associated shifted process. Let us denote by $\Omega^* := \{ A^* : A\in \Omega \}$ its state space.   

Define the sets 
	\[ \cA = \{ A \in \Omega^* : \cE (A) \leq \cE^* \} , \qquad 
	 \cB = \{ A \in \Omega^* : \cE (A) \leq 2\cE^* \} .\]
We seek to bound $\mu_N(\cB^c)$. By the Ergodic theorem for positive recurrent Markov chains, 
	\begin{equation}\label{ergodic}
	 \mu_N (\cB^c ) = \lim_{t\to\infty } \frac{1}{t} \bbE \bigg( \sum_{i=1}^t \mathds{1} (A^*(i) \in \cB^c ) \bigg) .
	 \end{equation}
Define the following stopping times: 
	\begin{align*} 
	 &\t_{\cA ,\partial \cB }^{1} := \inf \{ t\geq 0 : A^*(t) \in \partial \cB \}  , 
	 \\ & 
	 \t_{\partial \cB , \cA }^{1} := \inf \{ t\geq  \t_{\cA ,\partial \cB }^{1} : A^*(t) \in \cA\}  - 
	\t_{\cA ,\partial \cB }^{1}  ,
	\\  & \t_{\cA ,\partial \cB }^{i} := \inf \{ t\geq \t_{\partial \cB , \cA }^{i-1} : A^*(t) \in \partial \cB \}  
	- \t_{\partial \cB , \cA }^{i-1} ,  \quad i \geq  2, \\  &
	\t_{\partial \cB , \cA }^{i} := \inf \{ t\geq  \t_{\cA ,\partial \cB }^{i} : A^*(t) \in \cA\} 
	- \t_{\cA ,\partial \cB }^{i} , \quad i\geq 2,
	\end{align*}
where $\partial \cB = \{ A \in \Omega^* : \cE(A) \in [2\cE^* , 2\cE^*+1)\}$. 
Note that, since the excess height always changes by either $1-1/N$ or $1/N$,
	\[ \cE \bigg( \sum_{i=1}^k \big(  \t_{\cA ,\partial \cB }^{i}  + \t_{\partial \cB , \cA }^{i} \big) \bigg) \leq \cE^*+1 ,
	\qquad \; \forall \; k\geq 1. \]
We divide the interval $[0,t]$ into excursions from $\cA$ to $\partial \cB$ and then from $\partial \cB$ to $\cA$. Since during excursions from $\cA$ to $\partial \cB$ the process is in $\cB$, only excursions from $\partial \cB$ to $\cA$ contribute to the expectation in \eqref{ergodic}. Let us say that the concatenation of an excursion from $\cA$ to $\partial \cB$ and from $\partial \cB$ to $\cA$ is a \emph{complete excursion}. In the next lemma we bound the number of complete excursions by time $t$. 
\begin{Lemma} \label{le:Kt}
Let 
	\[ K(t) := \sup \Big\{ k\geq 1 : \sum_{i=1}^k \big(  \t_{\cA ,\partial \cB }^{i}  + \t_{\partial \cB , \cA }^{i} \big) \leq t \Big\} \]
denote the number of complete excursions by time $t$. 
Then for $\g = \frac{6}{N e^{N}  } $ and $N$ large enough  it holds 
	\begin{equation} \label{gamma}
	\bbP (K(t) \geq \g t) \leq \exp \Big( - \frac{t}{N e^{N} }\Big) . 
	\end{equation}	 
\end{Lemma}
\begin{proof}
Let 
	\[ \tilde{K}(t) := \sup \Big\{ k\geq 1 : \sum_{i=1}^k   \t_{\cA ,\partial \cB }^{i}  \leq t \Big\} .\]
Then clearly $\tilde{K}(t) \geq K(t)$, and so $\bbP (K(t) \geq \g t ) \leq \bbP (\tilde{K} (t) \geq  \g t)$. Moreover, since the excess height changes by at most~$1$  at each step, at the start of each excursion from $\cA$ to $\partial \cB$ the excess height must lie between $\cE^*-1 $ and $\cE^*$. We know (cf.\ Lemma \ref{le:towards_height}) that when the excess height is greater than $\cE^*$ it has a negative drift, but we do not have any information on its drift when the process is in the set $\cA$. To overcome this problem, we simply ignore the time spent in $\cA$. Indeed, we will see that the time spent in $\cB \setminus \cA$ is large enough to give us what we want.

We seek to stochastically bound $\t^i_{\cA , \partial \cB}$ from below. To this end, define the following auxiliary random walk on $\frac{1}{N} \bbN =\big\{ \frac{n}{N} : n\in \bbN \big\}$, with a reflecting barrier at zero: 
	\begin{equation}\label{rwX}
	 \begin{split}
	& X_0 = 0 , \qquad \bbP \Big( X_{i+1} =1 -\frac{1}{N} \, \Big| X_{i} =0 \Big) =1 , \\
	& X_{i+1} - X_i = \begin{cases}
	1-\frac{1}{N} , \mbox{ with probability } \frac{1-\eta}{N} , \\
	-\frac{1}{N} , \quad \mbox{ otherwise }  
	\end{cases} \quad \mbox{ for } X_i > 0.
	\end{split} 
	\end{equation}
Note that, while the Shifted IDLA process $(A^*(t))_{t\geq 0}$ is in $\cB \setminus \cA$, the walk $X$ away from~$0$  stochastically dominates the associated excess height process by Lemma \ref{le:towards_height}. In particular, let $N_0$ denote the total number of visits of $X$ to $0$ before reaching $[\cE^* , \infty )$. Then 
	\begin{equation}\label{tauN}
	  \t^i_{\cA , \partial \cB} \succeq N_0 N ,
	 \end{equation}
since each time the walk reaches $0$ it jumps deterministically to $1-1/N$, after which it takes at least $N$ steps to reach $0$ again.
%
Now, $N_0$ is a geometric random variable with success probability $\bbP_{1-1/N} ( X \mbox{ reaches } [\cE^* , \infty ) \mbox{ before } 0 )$. The next result tells us that this probability is very small, and so $N_0$ is typically large. 
\begin{Lemma}\label{postpone}
For $\cE^*$ as in \eqref{eq:E*} and $N$ large enough, it holds 
 	\begin{equation} \label{eq:postpone}
 	\bbP_{1-1/N} ( X \mbox{ reaches } [\cE^* , \infty )  \mbox{ before } 0 ) \leq  e^{-N} \,.
 	 \end{equation} 
\end{Lemma}
Let us postpone the proof of the above lemma to Appendix \ref{app:postpone}, and explain how from this one can deduce the bound of Lemma \ref{le:Kt}. 
Note that \eqref{tauN} and \eqref{eq:postpone} together imply that 
	\[ \bbE ( \t^i_{\cA , \partial \cB} ) \geq  N e^{N} \]
for $N$ large enough. 
Let now  $T^X_{\cE^*} := \inf \{ i\geq 0 : X_i \geq  \cE^* \}$ denote the first hitting time of $[\cE^* , \infty )$ for the walk $X$, and notice that $T_{\cE^*}^X \geq N_0 N $. 	 We define  $\big( T^{X,(i)}_{\cE^*}\big)_{i\geq 1}$ to be i.i.d.\ copies of $T^X_{\cE^*}$, independent of everything else. 
Recall the definition of $\tilde{K} (t)$, and define further 
\[ \tilde{K}'(t) := \sup \Big\{ k\geq 1 : \sum_{i=1}^k   T^{X,(i)}_{\cE^*}  \leq t \Big\} .\]
Then we have 
	\begin{equation}\label{Kg}
	 \begin{split} 
	\bbP (\tilde{K}(t) \geq  \g t ) & \leq \bbP ( \tilde{K}'(t) \geq  \g t ) = 
	\bbP \bigg( \sum_{i=1}^{\g t} T^{X,(i)}_{\cE^*}  \leq t \bigg) .
	\end{split} 
	\end{equation}
\begin{Lemma}\label{post2}
Let $\g = \frac{6}{N e^{N}  } $. Then for $N$ large enough it holds 
	\[ \bbP \bigg( \sum_{i=1}^{\g t} T^{X,(i)}_{\cE^*}  \leq t \bigg)  
	\leq \exp \Big( - \frac{t}{N e^{N} }\Big) . \] 
\end{Lemma}
This lemma is an easy consequence of the fact that $T^X_{\cE^*} \geq N_0 N$ and Lemma \ref{postpone}, so we leave the proof for Appendix \ref{app:post2}. This finishes the proof of Lemma \ref{le:Kt}. 
\end{proof}
\vspace{1mm}
Using \eqref{gamma} we can conclude the proof of  Proposition \ref{pr:initial_excess}. Indeed, writing $\cE (t) $ in place of $\cE (A^*(t))$ for brevity, and taking $\g $ as above,  we find
\[ \begin{split} 
	\mu_N (\cB^c ) &= \lim_{t\to\infty } \frac{1}{t} \bbE \bigg( \sum_{i=1}^t \mathds{1} ( \cE (t) \geq 2\cE^* ) \bigg) 
	\leq \lim_{t\to\infty } \frac{1}{t} \bbE \bigg( t- \sum_{i=1}^{K(t)} \t_{\cA ,\partial \cB }^{i}  \bigg) 
	\\ & \leq  \lim_{t\to\infty } \bigg[ \frac{1}{t}  \bbE \bigg( t- \sum_{i=1}^{K(t)} \t_{\cA ,\partial \cB }^{i}  \, ;  \, K(t) < \g t \bigg)  + \bbP (K(t) \geq  \g t ) \bigg] 
	\\ & \leq  \lim_{t\to\infty } \bigg[ \frac{1}{t} \bbE \bigg( \sum_{i=1}^{\g t} \t_{\partial \cB  , \cA }^{i} \bigg) + \exp \Big( - \frac{t}{N e^{N} }\Big) \bigg] 
	= \g \bbE ( \t_{\partial \cB ,\cA }^{1} ) 
	= \frac{ 2 \bbE ( \t_{\partial \cB ,\cA }^{1} )}{N e^{N} }  . 
	\end{split} \]
Lemma \ref{le:excess_height} with $t_0 := \frac{N}{\eta} \cE^* $ then yields 
	\[ \begin{split} 
	\bbE ( \t_{\partial \cB ,\cA }^{1} ) & 
	=\int_0^\infty \bbP ( \t_{\partial \cB ,\cA }^1 >t ) dt 
	\leq t_0  + \int_{t_0 }^\infty \exp \Big( - \frac{\eta^2 t}{8N^2} \Big)  dt 
	\\ & = \frac{N \cE^*}{\eta }  + \frac{8N^2}{\eta^2} \exp\Big( -\frac{\eta  \cE^*}{  8N} \Big)  
	\leq \frac{2N \cE^*}{\eta } 
	\end{split} \]
for $N$ large enough, since $\cE^* > (N \log N)^2$ by \eqref{e*}. 
Thus we conclude that 
	\[ \mu_N (\cB^c ) \leq \frac{4  \cE^* }{\eta} e^{-N} 
	\leq e^{-N/2}  \]
for $N$ large enough, as claimed. 
\end{proof}

\subsection{From high density to low height}
We have shown in the previous section that typical clusters are dense.
While this does not give any information on the height of $A$, it provides an upper bound on the number of empty sites, that we will call \emph{holes}, below the top level $h(A)$.  
Indeed, if $\cE (A) \leq \cE^*$ then  there can be at most $N\cE^*$  holes in the cluster.  We now obtain a bound on the time it takes to fill these holes (cf.\ Proposition \ref{pr:discrepancy}),  showing that it is at most polynomial in $N$, and use this to prove that stationary clusters have at most polynomial height (cf.\ Proposition \ref{pr:poly_height}).

\begin{Proposition}\label{pr:discrepancy}
Let $(A^*(t))_{t\geq 0}$ denote a Shifted IDLA process on $\bbZ_N \times \bbZ$, and assume that  $\cE(A^*(0)) \leq \cE^*$, with $\cE^*$ as in \eqref{eq:E*}.   
Let 
	\begin{equation} \label{eq:Dnew}
	\D := \left\lceil 2N^2 \cE^*\right\rceil +1 . 
	\end{equation} 
Write  $h^*(t)$ in place of $ h(A^*(t)) $ for brevity, and assume that $h^*(0) > \D$. 
Define  $T_\D := \inf \{ t\geq 0 : h^*(t) \leq \D \} $ to be the first time the height of the Shifted IDLA process drops below $\D$. Then there exists a constant $C_\eta >0$, depending only on $\eta$, such that 
	\begin{equation} \label{eq:discrepancy} 
	\bbP ( T_\D > t ) \leq 4 \exp \Big( - \frac{ C_\eta t}{ N\D  } \Big)  
	\end{equation}
for $N$ large enough. 
\end{Proposition}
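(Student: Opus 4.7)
The plan is a block-iteration of the water coupling from Theorem \ref{th:main2}, combined with Lemma \ref{le:excess_height} to propagate the excess height bound across blocks.

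First, I would fix a block length $L := C_1 N\D$ for a constant $C_1 = C_1(\eta)$ chosen below. The main per-block estimate asserts the existence of a universal constant $p>0$ such that, for any state $A^*(s)$ satisfying $\cE(A^*(s))\leq 2\cE^*$ and $h^*(s) > \D$,
\[
\bbP\bigl(h^*(s+L)\leq\max\{\D,\,h^*(s)-\D/2\}\;\big|\;A^*(s)\bigr)\geq p.
\]
When $h^*(s)\leq 3\D$, this is a direct consequence of Theorem \ref{th:main2} combined with Theorem \ref{pr:height_bound}: couple the unshifted IDLA from $A^*(s)$ with unshifted IDLA from an approximate filled rectangle of the same cardinality $|A^*(s)|$; after $L$ steps the two agree with probability at least $1-N^{-\g}$, and on the rectangular-start side Theorem \ref{pr:height_bound} gives shifted height $O(\log N)\leq \D$. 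When $h^*(s)>3\D$, a localized version of this coupling applied to the top $\Theta(\D)$ levels (justified by the horizontal mixing provided by Lemma \ref{le:mix}, so that particles entering the top slab are nearly uniformly distributed) yields a drop of at least $\D/2$ with constant probability in $L$ steps.

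Second, Lemma \ref{le:excess_height} guarantees that $\cE(A^*(t))\leq 2\cE^*$ holds with overwhelming probability throughout the relevant horizon: starting from $\cE(A^*(0))\leq \cE^*$, the first exit from $\{\cE\leq 2\cE^*\}$ occurs before any given polynomial time with only super-polynomially small probability. A union bound over blocks preserves the per-block assumption at negligible cost.

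Third, iterating the per-block estimate via the strong Markov property, let $H_k:=h^*(kL)$. On the good event, each block with $H_{k-1}>\D$ satisfies $H_k\leq\max\{\D,H_{k-1}-\D/2\}$ with probability at least $p$. The number of blocks $K_\D$ until $H_K\leq\D$ is then dominated by a negative-binomial variable with parameters giving mean $O((h_0-\D)/(p\D))$, and a standard Chernoff bound yields $\bbP(K_\D\geq k)\leq (1-p)^{\Theta(k)}$ for $k$ beyond the mean. Translating back to continuous time via $T_\D\leq K_\D L$ produces the bound \eqref{eq:discrepancy} with $C_\eta$ depending only on $p$ and $C_1$ (hence only on $\eta$).

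The main obstacle is the per-block estimate when $h^*(s)\gg\D$: a single direct application of Theorem \ref{th:main2} requires $\Theta(Nh^*(s))$ steps to succeed, far exceeding the block length $L=\Theta(N\D)$. Overcoming this requires the slab-coupling variant described above, exploiting that by Lemma \ref{le:mix} a walker reaching the top $\Theta(\D)$ levels of the cluster arrives with its horizontal coordinate well-mixed, effectively decoupling the top slab from the bulk below. Carefully formalizing this decomposition while retaining the Abelian property (Proposition \ref{pr:Abelian}) and ensuring the failure probabilities accumulate harmlessly across blocks is the principal technical difficulty.
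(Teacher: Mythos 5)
Your proof has a genuine gap, and the underlying issue is a wrong mental model for how the shifted height $h^*$ evolves.

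\textbf{The key mechanism is missed.} In Shifted IDLA the height $h^*$ does not decrease gradually; it drops abruptly when the maximal filled rectangle jumps upward. Once the excess height is at most $\cE^*$ there are at most $N\cE^*$ empty sites (``holes'') at or below the current top level. The paper's argument is that each arriving particle fills the \emph{lowest} remaining hole with probability at least $1/N$ (since the walker reaches that level through a filled region, hence uniformly distributed in the horizontal coordinate). Hence after $\D = \lceil 2N^2\cE^*\rceil+1$ releases the sum of $N\cE^*$ independent $\text{Geom}(1/N)$'s dominates the filling time, which is at most $\D$ with probability $>1/2$ by Markov. On that success event \emph{all} holes up to the old top level are filled, so the filled rectangle jumps to the old top level and the new shifted height is at most $\D$, regardless of how large $h^*(0)$ was. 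If the attempt fails, the excess height has increased by at most $\D$; one then invokes Lemma~\ref{le:excess_height} to wait for $\cE$ to drop below $\cE^*$ again and repeats. This is why the bound \eqref{eq:discrepancy} is uniform in $h^*(0)$: a geometric number of $\D$-step attempts, each followed by a recovery time with exponential tail, independent of the initial height.

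\textbf{Your block scheme cannot produce this bound.} Your per-block estimate asserts a height drop of $\D/2$, so you expect $\Theta((h_0-\D)/\D)$ successful blocks and $T_\D = \Theta(N h_0)$, giving a tail bound that degrades with $h_0$; the proposition claims a bound that does not depend on $h_0$ at all. Moreover, on a failed block of length $L = C_1 N\D$ the shifted height can \emph{increase} by as much as $L$ (one unit per particle), which vastly exceeds the $\D/2$ decrease on a success, so the negative-binomial / Chernoff step is not justified even qualitatively. The ``slab coupling'' for $h^*(s)>3\D$ is also misplaced: the shifted height is governed by the lowest holes (which sit just above level $0$ of $A^*$), not by the top $\Theta(\D)$ levels. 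Mixing in the top slab does nothing to move the filled rectangle. Finally, the claim that Lemma~\ref{le:excess_height} guarantees $\cE(A^*(t))\leq 2\cE^*$ ``with overwhelming probability throughout'' is not what that lemma says; it bounds the return time of $\cE$ below $\cE^*$ but does not prevent temporary excursions above $2\cE^*$, and the paper instead structures the argument around explicit $\D$-step attempts and recovery phases to avoid needing such a uniform bound.
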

\begin{Remark}
Since $N\D$ is polynomial in $N$, this tells us that, when starting from a dense configuration,  the height of a Shifted IDLA process drops below $\D$ after at most polynomially many releases. 
\end{Remark}


\begin{proof}
We argue as follows. Each time we add a new particle to the cluster, the lowest hole has probability at least $1/N$ to be filled, independently of everything else. Hence it will take at most a geometric number of releases of parameter $1/N$ to fill the lowest hole. In total, then, it will take at most the sum of $N\cE^*$ i.i.d.\ Geometric$(1/N)$ to fill all the holes up to the top level  $h(A)$. Some care is needed, though: we cannot let the excess height increase too much while releasing these extra particles. 

Let $(G_i)_{i=1}^{N\cE^*} $ be a collection of i.i.d.\ Geometric$(1/N)$ random variables, and note that since $\D > 2 N^2 \cE^* $ we have
	\begin{equation}\label{eq:D}
	 \bbP \bigg( \sum_{i=1}^{N\cE^*} G_i > \D \bigg) < \frac{1}{2} \, .
	 \end{equation}
It follows that if we release $\D$ particles then we have probability at least $1/2$ to fill all the holes below the top level. If we fail, then the excess height has increased by at most $\D$. If so, we keep on releasing particles until the excess height falls again below $\cE^*$, and iterate. 
After a Geometric$(1/2)$ number of attempts we will have filled all the holes below the top level, which implies that the resulting Shifted IDLA cluster will have height at most $\D$. 

To formalise the above strategy, write $\cE (t)$ in place of $\cE (A^*(t))$ and define the following random times: 
	\begin{align*}
	& \t_0 := 0 ,  \\
	& \t_k :=  \inf \{ t\geq \t_{k-1}+\D : \cE (t) \leq  \cE^* \}  \, , \quad k\geq 1, \\
	 & s_k := \t_k -( \t_{k-1} +\D ) \, , \quad k\geq 1.
	\end{align*}
Then, by definition, at times $\t_i$ there are at most $N\cE^*$ holes below the top level. Since the number of releases needed to fill all these holes is stochastically dominated by the sum of~$N\cE^*$ i.i.d.\ Geometric$(1/N )$ random variables, \eqref{eq:D} implies that at times $\t_i +\D$ we have filled all the holes with probability at least $1/2$. If this happens, then $h^*(\t_i +\D) \leq \D$, and we stop. Otherwise $\cE (\t_i +\D) \leq \cE^* +\D$, and we start again. 

Let 
	\[ K := \min \{ k\geq 0 : h^* (\t_k +\D) \leq \D \} \]
denote the number of attempts needed to succeed. Then, denoting by $\preceq $  stochastic domination, we have  $K \preceq K'$ for $K'$  Geometric$(1/2)$ random variable.  
Note that the times $s_i$'s are not in general identically distributed. 
It is convenient to make them i.i.d.\ by assuming that the excess height always starts from the maximum value $\cE^*+\D$. More precisely, let 
	\begin{equation} \label{hatS}
	 \hat s:= \inf \{ t\geq 0 : \cE(t) \leq \cE^* \mbox{ starting from } \cE(0) =\cE^*+\D \} ,
	 \end{equation}
and let $(\hat s_i)_{i\geq 1}$ be a sequence of i.i.d.\ random variables equal in law to $\hat s$. Then $s_i \preceq \hat{s}_i$ and 
we have that 
	\[ T_\D \preceq  K\D + \sum_{i=1}^K \hat s_i 
	\preceq  K'\D + \sum_{i=1}^{K'} \hat s_i . \]
We use this to estimate the moment generating function of $T_\D$. 
For $\l \in \bbR$, set~$M (\l) := \bbE ( e^{\l \hat{s}} )$. We have
	\begin{equation} \label{eq:chain}
	 \begin{split} 
	\bbE (e^{\l T_\D } ) & \leq \bbE \bigg[ \exp\Big\{\l \Big( K'\D + \sum_{i=1}^{K'} \hat s_i \Big)\Big\} \bigg] \\ & 
	= \sum_{k=1}^\infty \bbE \bigg[ \exp\Big\{\l \Big( k\D + \sum_{i=1}^k s_i \Big)\Big\}  \mathds{1}(K'=k)\bigg] \\ 
	& \leq \sum_{k=1}^\infty \bigg( \frac{e^{\l \D} M(\l)}{2} \bigg)^k \,,
	\end{split}
	\end{equation}
where the first equality above follows from the Monotone Convergence Theorem, and the last inequality from 
the independence of $K'$ and the $\hat{s}_i$'s.
\begin{Lemma}\label{le:Ml}
For $\l < \frac{\eta^2}{32 N^2}$ and $N$ large enough, it holds $M(\l ) \leq \frac{3}{2} \exp \big( \frac{2\D N \l }{\eta} \big)$. 
\end{Lemma}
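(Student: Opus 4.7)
The plan is to control $M(\lambda) = \bbE(e^{\lambda \hat{s}})$ by integrating the exponential tail estimate for $\hat{s}$ supplied by Lemma~\ref{le:excess_height}. By monotonicity I only need to establish the inequality for $0 \leq \lambda < \eta^2/(32N^2)$; the case $\lambda \leq 0$ gives $M(\lambda) \leq 1$ and plays no role in the subsequent use of this lemma in \eqref{eq:chain}.

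First I would observe that $\hat{s}$ is exactly the stopping time $T_{\cE^*}$ of Lemma~\ref{le:excess_height} for a Shifted IDLA chain started with excess height $\cE(0) = \cE^* + \D$. Setting $t_0 := N\D/\eta$, that lemma gives, for $N$ large enough,
\[ \bbP(\hat{s} > t) \leq \exp\!\Big(-\frac{\eta^2}{8N^2}\, t\Big) \quad \text{for all } t > t_0. \]

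Next I would use the layer-cake identity
\[ M(\lambda) = 1 + \lambda \int_0^\infty e^{\lambda t}\, \bbP(\hat{s} > t)\, dt \]
and split the integral at $t_0$. On $[0,t_0]$ the trivial bound $\bbP(\hat{s} > t) \leq 1$ contributes $e^{\lambda t_0} - 1$. On $[t_0,\infty)$ the tail bound gives
\[ \lambda \int_{t_0}^\infty e^{(\lambda - \eta^2/(8N^2)) t}\, dt = \frac{\lambda}{\eta^2/(8N^2) - \lambda}\, e^{(\lambda - \eta^2/(8N^2)) t_0}. \]
The hypothesis $\lambda < \eta^2/(32N^2)$ forces $\eta^2/(8N^2) - \lambda \geq 3\eta^2/(32N^2)$, so the prefactor is at most $1/3$ and the exponential factor is at most $e^{\lambda t_0}$. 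Assembling,
\[ M(\lambda) \leq 1 + (e^{\lambda t_0} - 1) + \tfrac{1}{3} e^{\lambda t_0} = \tfrac{4}{3}\, e^{N\D\lambda/\eta}, \]
which is stronger than the claimed bound, since $\tfrac{4}{3} \leq \tfrac{3}{2}$ and $N\D\lambda/\eta \leq 2N\D\lambda/\eta$ for $\lambda \geq 0$.

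There is essentially no obstacle once Lemma~\ref{le:excess_height} is taken as a black box: the argument reduces to a single integration of $e^{\lambda t}$ against an exponential tail. The slack in the stated constants (the factor $\tfrac{3}{2}$ versus the $\tfrac{4}{3}$ that actually appears, and the factor $2$ in the exponent) suggests that the authors have simply written a generous bound so as not to propagate sharp constants into the geometric-series estimate~\eqref{eq:chain} where this lemma is applied.
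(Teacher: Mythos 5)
Your proof is correct and uses essentially the same approach as the paper's: bound $M(\lambda)$ by splitting the integral of $e^{\lambda t}$ against $\bbP(\hat s > t)$ at a threshold of order $N\D/\eta$, bounding the first piece trivially and integrating the exponential tail from Lemma~\ref{le:excess_height} on the second. The only (cosmetic) difference is that the paper first dominates $\hat s$ by the hitting time $\hat\t_\D$ of the auxiliary walk $X$ of \eqref{rwX} before applying the tail bound, whereas you invoke Lemma~\ref{le:excess_height} directly on $\hat s$ — which is fine, since that tail estimate is uniform over starting configurations with the given initial excess height — and your slightly sharper threshold and prefactor $4/3$ simply reflect that the paper's constants are deliberately generous for use in \eqref{eq:chain}.
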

The above lemma implies that $ \frac{e^{\l \D} M(\l ) }{2} 
	\leq \frac{3}{4} \exp \big\{ \l \D \big( 1 + \frac{2N}{\eta} \big) \big\} $, 
which can be made smaller than, say, $4/5$ by taking 
	\[ \l < \frac{ \log (16/15)}{\D ( 1+\frac{2N}{\eta} )} . \]
Let  $C_\eta = \frac{\log (16/15)}{4}\eta$, so that $\frac{C_\eta}{N\D} <  \frac{ \log (16/15)}{\D ( 1+\frac{2N}{\eta} )}$ for large $N$. If $\l^* = \frac{C_{\eta}}{N\D }$ then  we have 
	\[ \bbE ( e^{\l^* T_\D} ) \leq \sum_{k=1}^\infty \Big( \frac{4}{5} \Big)^k 
	= 4 . \]
Thus we conclude that 
	\[ \bbP ( T_\D > t ) \leq \bbE ( e^{\l^* T_\D} ) e^{-\l^* t} 
	\leq 4 e^{-\l^* t} = 4 \exp \Big( - \frac{ C_\eta t }{N\D } \Big) , \]
as claimed. It remains to prove Lemma \ref{le:Ml}.
\begin{proof}[Proof of Lemma \ref{le:Ml}]
Recall the definition of $\hat{s}$ from \eqref{hatS}, so that $M(\l ) = \bbE (e^{\l \hat{s}} )$. Introduce the auxiliary random walk $(X_k)_{k\geq 0}$ defined as in \eqref{rwX}, and note that 
	\[ \hat{s} \preceq  \hat{\t}_\D 
	:= \inf\{ t\geq 0 : X_t \leq \cE^* \mbox{ starting from } \cE^* +\D \} . \]
Then it follows from Lemma \ref{le:excess_height} that, for $t\geq \frac{2\D N}{\eta}$ and $N$ large enough, it holds 
	\[\bbP ( \hat{\t}_\D > t ) \leq \exp \Big( -\frac{\eta^2 t}{8N^2} \Big). \]
Therefore, for $\l < \frac{\eta^2}{32 N^2}$, we find 
	\[ \begin{split}
	\bbE ( e^{\l \hat{s}} ) & \leq \bbE (e^{\l \hat{\t}_\D }  ) 
	= \int_0^\infty \bbP ( e^{\l \hat{\t}_\D} \geq t ) dt 
	= \int_0^\infty \bbP \Big( \hat{\t}_\D \geq \frac{\log t}{\l}  \Big) dt 
	\\ & 
	\leq \exp \Big( \frac{2\D N \l}{\eta} \Big)  + \int_{\exp \big( \frac{2\D N \l }{\eta} \big) }^\infty \exp \Big( -\frac{\eta^2 \log t }{8N^2 \l } \Big) dt 
	\\ & = \exp \Big( \frac{2\D N \l}{\eta} \Big) + 
	\frac{ \exp \big( - \frac{2\D N \l }{\eta} \big( \frac{\eta^2}{8 N^2 \l } -1 \big) \big) }{\frac{\eta^2}{8 N^2 \l} -1 } 
	\\ & \leq \frac{3}{2} \exp \Big( \frac{ 2\D N \l }{\eta } \Big) , 
	\end{split}\]
where the last inequality holds for $N$ large enough. 
\end{proof}
This concludes the proof of Proposition \ref{pr:discrepancy}. 
\end{proof}

We use this to show that stationary clusters have at most polynomial height. 
\begin{Proposition}\label{pr:poly_height}
For $N$ large enough, it holds 
	\begin{equation*}
	 \mu_N \Big( \big\{ A :  h(A) > N^8 \big\} \Big)  
	\leq  2e^{-N/2} .
	\end{equation*}
\end{Proposition}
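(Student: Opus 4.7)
The plan is to leverage stationarity of $\mu_N$ together with the three ingredients developed in this section: the density bound of Proposition \ref{pr:initial_excess}, the excess‑height drift of Lemma \ref{le:excess_height}, and the height‑reduction bound of Proposition \ref{pr:discrepancy}. By stationarity, for every $T\geq 0$,
\[
\mu_N(\{A:h(A)>N^8\}) \;=\; \bbP_{\mu_N}\bigl(h(A^*(T))>N^8\bigr),
\]
so it suffices to bound the right‑hand side for a single, judiciously chosen value of $T$. Since a Shifted IDLA step increases the height by at most one (the new particle must attach to the existing cluster, and the downshift can only decrease the height), the height at time $T$ is at most $\Delta + T$ provided the height reached $\Delta$ at some earlier time $\tau \leq T$. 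Taking $T = N^7$, and recalling from \eqref{eq:Dnew} together with \eqref{e*} that $\Delta = O(N^4(\log N)^2)$, we have $\Delta + T \leq N^8$ for $N$ large, so it is enough to produce such a $\tau$ on an event of $\mu_N$-probability at least $1-2e^{-N/2}$.

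To produce $\tau$ I would chain three bounds together. First, Proposition \ref{pr:initial_excess} gives $\bbP_{\mu_N}(\cE(A^*(0)) > 2\cE^*) \leq e^{-N/2}$; this is the unavoidable $e^{-N/2}$ term that forces the factor of $2$ in the conclusion. Next, on the good event $\{\cE(A^*(0))\leq 2\cE^*\}$, Lemma \ref{le:excess_height} applied with initial excess height at most $2\cE^*$ shows that the stopping time
\[
T_{\cE^*} := \inf\{t\geq 0 : \cE(A^*(t)) \leq \cE^*\}
\]
satisfies $\bbP(T_{\cE^*} > t_1) \leq \exp(-\eta^2 t_1/(8N^2))$ for any $t_1 > N\cE^*/\eta$; choosing $t_1 = N^5$ makes this bound far smaller than $e^{-N/2}$. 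Finally, by the strong Markov property at the stopping time $T_{\cE^*}$, the process from time $T_{\cE^*}$ onwards is a fresh Shifted IDLA chain started from a state with excess height at most $\cE^*$, so Proposition \ref{pr:discrepancy} applies. Choosing $t_2 = N^7/2$ (which dominates the $O(N^6(\log N)^2/C_\eta)$ threshold required to make the exponent in \eqref{eq:discrepancy} exceed $N/2$), the event $\{T_\Delta \leq T_{\cE^*}+t_2\}$ holds except with probability $4\exp(-C_\eta t_2/(N\Delta))$, once more far smaller than $e^{-N/2}$.

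Putting the pieces together, on the intersection of the three good events there is a random time $\tau \leq t_1+t_2 < N^7 = T$ with $h(A^*(\tau)) \leq \Delta$, and hence $h(A^*(T))\leq \Delta + T \leq N^8$. A union bound gives the desired $\mu_N(\{A:h(A)>N^8\})\leq 2e^{-N/2}$ for $N$ large enough.

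The only mildly delicate point I anticipate is the gap between Proposition \ref{pr:initial_excess}, which only delivers $\cE\leq 2\cE^*$, and Proposition \ref{pr:discrepancy}, whose hypothesis is $\cE\leq \cE^*$; Lemma \ref{le:excess_height} is precisely what closes this gap, at probabilistic cost that is negligible compared to $e^{-N/2}$. Implementing this cleanly requires noting that $\cE(A)$ is invariant under the downshift $\cS$, so that Lemma \ref{le:excess_height} applies verbatim to the Shifted IDLA chain, and invoking strong Markov at $T_{\cE^*}$ to reset the time origin before applying Proposition \ref{pr:discrepancy}.
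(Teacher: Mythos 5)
Your proof is correct and follows the paper's strategy: use stationarity of $\mu_N$ to reduce to a dynamic statement, bound the initial excess height via Proposition~\ref{pr:initial_excess}, and apply Proposition~\ref{pr:discrepancy} to bring the height below $\D$ within $N^7$ steps. The one substantive difference is that you insert Lemma~\ref{le:excess_height} (via strong Markov at $T_{\cE^*}$) to bring the excess height from the bound $\leq 2\cE^*$ supplied by Proposition~\ref{pr:initial_excess} down to $\leq \cE^*$ before invoking Proposition~\ref{pr:discrepancy}, whose stated hypothesis is $\cE(A^*(0))\leq \cE^*$. This is a genuine improvement in rigor: the paper's proof passes directly from $\cE(A(0))\leq 2\cE^*$ to the conclusion of Proposition~\ref{pr:discrepancy}, which as written is a factor-of-two mismatch in the hypothesis. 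That step can be salvaged by rereading Proposition~\ref{pr:discrepancy} with $2\cE^*$ in place of $\cE^*$ and $\D$ doubled accordingly (all the ensuing estimates survive, since $\D$ remains polynomially small compared with $N^7$), but your intermediate appeal to Lemma~\ref{le:excess_height} closes the gap cleanly at negligible extra probabilistic cost of order $\exp(-\eta^2 N^3/8)$. You also spell out two facts the paper uses silently: that $\cE$ is invariant under the downshift $\cS$, so Lemma~\ref{le:excess_height} applies verbatim to the shifted chain, and that a single step of Shifted IDLA increases the height by at most one, giving $h(A^*(T))\leq \D + T$ once the height has touched $\D$. Both are exactly the implicit facts the argument rests on, and making them explicit is good.
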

\begin{proof}
Let $(A(t))_{t\geq 0}$ be an IDLA process with $A(0) = A \sim \mu_N$. Then $A(t) \sim \mu_N$ for all deterministic $t\geq 0$. Write $h(t)$ in place of $h(A(t))$ for brevity. Take $\cE^* = N^2 \log^3 N$, and note that it satisfies \eqref{eq:E*}. Define $\D$ as in \eqref{eq:Dnew}. Then we have 
	\[ \begin{split} 
	\bbP ( h(A) > N^8 ) & = \bbP (h(N^7) > N^8 | A(0) =A) \\ & 
	\leq \bbP ( h(N^7) > N^8 | \cE (A(0)) \leq 2\cE^* ) + \bbP (\cE (A(0)) > 2\cE^* )
	\\ & \leq \bbP (T_\D > N^7 | \cE (A(0)) \leq 2\cE^* ) + e^{-N/2} 
	\leq 2e^{-N/2} ,
	\end{split} \]
where we have used that if $T_\D \leq N^7$ then $h(N^7) \leq \D + N^7 < N^8$. 
\end{proof}

\begin{Remark}
It is worth pointing out that the proof of Proposition \ref{pr:initial_excess}, and hence of Proposition \ref{pr:poly_height} above, would also work for driving random walks on $\bbZ_N \times \bbZ $ with a vertical drift. This would still give a polynomial bound for the height of typical clusters, perhaps with a larger exponent than the one in Proposition \ref{pr:poly_height}. Reasoning as in Theorem \ref{th:main2}, such height bound could in turn be translated into a (largely non-optimal) upper bound for the time it takes for IDLA with transient driving walks to forget its initial profile. 
\end{Remark}

\subsection{Typical clusters are shallow} \label{sec:shallow2}
We now finish the proof of Theorem \ref{pr:typical_height}. 
To start with, note that it suffices to prove the result for stationary clusters. Indeed, suppose we showed that for any $\g >0$ there exists a constant $c_\g$ such that 
	\[ \mu_N \left( \{ A : h(A) > c_\g \log N \} \right) \leq N^{-\g} \]
for $N$ large enough. Then, if $\nu_N$ is any $k$-lukewarm start for Shifted IDLA, we have 
	\[  \nu_N \left( \{ A : h(A) > c_\g \log N \} \right) \leq N^{-(\g -k)} , \]
so that \eqref{numu} still holds with $\g +k$ in place of $\g$. It thus suffices to prove Theorem \ref{pr:typical_height} for stationary clusters. We argue as follows.
Let $(A(t))_{t\geq 0}$ be an IDLA process starting from $A(0) \sim \mu_N$, so that $A(t) \sim \mu_N$ for all deterministic $t\geq 0$. Introduce an auxiliary IDLA process $(\bar{A}(t))_{t\geq 0}$ starting from the flat profile $\bar{A}(0)=R_0$. Then, if $|A(0)|=n_0$, we have 
	\[ |A(t) | = |\bar{A}(n_0+t)| = n_0 +t , \qquad \forall t\geq 0 . \]
Write $A'(t) = \bar{A}(n_0+t)$ to shorten the notation. 
Following the ideas presented in the proof of Theorem \ref{th:main2}, we will couple the clusters $A(N^{10})$ and $A'(N^{10})$ so that they match with high probability. Since $A(N^{10}) \sim \mu_N$, and $A'(N^{10})$ has logarithmic fluctuations with high probability, this will allow us to conclude.

To start with, note that by Proposition \ref{pr:poly_height} we have 
	\[ \bbP \big( h(A(0)) > N^8 \big) \leq 2e^{-N/2} \]
for $N$ large enough. In particular this shows that $\bbP (n_0 >N^9) \leq 2e^{-N/2}$. 
We can use this to bound the height of $A'(0)$. Indeed, for $\g$ as in the statement of Theorem~\ref{pr:typical_height}, we have 
	\[ \begin{split} 
	 \bbP ( h(A'(0)) > 2N^8 ) & \leq 
	\bbP ( h(\bar{A}(n_0)) > 2N^8  , n_0 \leq N^9) + \bbP ( n_0 > N^9 ) 
	\\ & \leq \bbP \big( h(\bar{A} (N^9) ) > 2N^8 \big) + 2e^{-N/2} 
	\leq N^{-2\g} + 2e^{-N/2} \leq 2N^{-2\g} 
	\end{split} \] 
for $N$ large enough, where in the last inequality we have used Theorem \ref{pr:height_bound} to argue that an IDLA cluster built by adding $N^9$ particles to $R_0$ has height $\cO (N^8)$ with high probability. 
Introduce a water cluster $W_0$ obtained by adding $N^{10}$ particles to the flat configuration $R_0$ according to IDLA rules, and note that
	\begin{equation} \label{eq:Wfilled}
	 \bbP \big( W_0 \supseteq R_{3N^8} \big) \geq 1- N^{-2\g} 
	 \end{equation} 
by Theorem \ref{pr:height_bound} for $N$ large enough. 
Define two auxiliary water processes $(W(t))_{t\geq 0}$ and $(W'(t))_{t\geq 0}$ as follows. Set $W(0) = W'(0) = W_0$. Particles in $W(0) \cap A(0)$ and $W'(0) \cap A'(0)$ are declared frozen, and will be released at a later time. Note that 
	\begin{equation} \label{eq:event}
	\bbP \Big( \max\{ h(A(0)) , h(A'(0)) \} \leq 2N^8 ; \, W_0 \supseteq R_{3N^8} \Big) 
	\geq 1-4N^{-2\g} ,
	\end{equation} 
which shows that all frozen particles are at distance at least $N^8$ from the boundary of $W_0$ with high probability. 
Fix arbitrary enumerations of the two sets of frozen particles, and accordingly denote their locations by $\{ z_1 , z_2 , \ldots , z_{n_0}\}$ and $\{ z'_1 , z'_2 , \ldots , z'_{n_0}\}$. 
For $t\geq 0$, then, let $W(t)$ (respectively $W'(t)$) be the cluster obtained by adding to $W(t-1)$ (respectively $W'(t-1)$) the exit location from it of a simple random walk on $\bbZ_N \times \bbZ$ starting from $z_t$ (respectively~$z'_t$). The random walks starting from $z_t$ and $z'_t$ are coupled as explained in the introduction: the higher one stays in place until the other one reaches its level, after which they move together in the vertical coordinate, and according to the reflection coupling in the horizontal one\footnote{If needed, we use the first horizontal step to adjust the parity of the difference of the horizontal coordinates, as explained in the proof of Proposition \ref{pr:coupling}.}. 
Then, writing $E_0$ for the event appearing in \eqref{eq:event} for brevity, by Proposition \ref{pr:coupling} we find 
	\[ \begin{split} 
	\bbP ( W(n_0) \neq  W'(n_0) ) & \leq  \bbP (  W(n_0) \neq  W'(n_0)  | \,  E_0 ) + \bbP(E_0^c) 
	\\ & \leq N^{-2\g} + 4N^{-2\g} = 5N^{-2\g} 
	\end{split} \]
for $N$ large enough. 
Thus  $W(n_0) = W'(n_0)$ with high probability. Since $A(N^{10})\stackrel{(d)}{=} W(n_0)$ and $A'(N^{10}) \stackrel{(d)}{=} W'(n_0)$, this shows that we can couple $A(N^{10})$ and $A'(N^{10})$ so that 
	\[ \bbP \big( A(N^{10}) = A'(N^{10})\big) \geq 1-5N^{-2\g} . \]
Now, $A(N^{10})$ is stationary since $A(0)$ is. We want to argue that $A'(N^{10}) = \bar{A}(n_0 + N^{10} ) $ has logarithmic fluctuations. If $n_0$ were deterministic, this would follow from Theorem \ref{pr:height_bound}. Instead $n_0$ is random since such is $A(0)$, and as already observed it satisfies \linebreak $\bbP (n_0 > N^9 )  \leq 2e^{-N/2} $ for $N$ large enough. 
Moreover, by Theorem \ref{pr:height_bound} there exists a constant $b_{\g }$, depending only on $\g$, such that 
	\[ 	 \bbP \Big(  R_{\frac{t}{N} - b_{\g }  \log N }
	\subseteq  A(t) \subseteq
	R_{\frac{t}{N} + b_{\g }  \log N  }  \; \; \forall t \leq 2N^{10} \Big) \geq 1-N^{-2\g} \]
for $N$ large enough. We thus conclude that 
	\[ \begin{split} 
	\bbP \Big( \Big\{  R_{\frac{n_0+N^{10}}{N} - b_{\g }  \log N }  &
	\subseteq  A'(N^{10}) \subseteq
	R_{\frac{n_0+N^{10}}{N} + b_{\g }  \log N  }  \Big\}^c \Big) =
	\\ & = \bbP \Big( \Big\{  R_{\frac{n_0+N^{10}}{N} - b_{\g }  \log N }  
	\subseteq  \bar{A}(n_0+N^{10}) \subseteq
	R_{\frac{n_0+N^{10}}{N} + b_{\g }  \log N  }  \Big\}^c \Big)  \leq 
	\\ & \leq \bbP \Big( \Big\{ R_{\frac{t}{N} - b_{\g }  \log N }  
	\subseteq  \bar{A}(t) \subseteq
	R_{\frac{t}{N} + b_{\g }  \log N  }   \;\; \forall t \leq 2N^{10} \Big\}^c \Big) + 
	\bbP (n_0 > N^9) 
	\\ & \leq N^{-2\g} + 2e^{-N/2} \leq 2N^{-2\g} 
	\end{split} \]
for $N$ large enough. This shows that $A'(N^{10})$ has logarithmic fluctuations with high probability. Recall that for $A \in \Omega$ we denote by $A^*$ its shifted version (cf.\ Definition \ref{def:shifted}). Then we have found that 
	\[ \begin{split} 
	\mu_N \Big( \big\{ A : h(A) > 2b_{\g} \log N \big\} \Big) 
	& = \bbP \big( h(A^*(N^{10})) > 2 b_{\g} \log N \big) 
	\\ & \leq \bbP \big( h(A'^*(N^{10})) > 2 b_{\g} \log N \big)  + 
	\bbP \big( A(N^{10}) \neq A'(N^{10}) \big) 
	\\ & \leq 7 N^{-{2\g} } \leq N^{-\g}
	\end{split}
	\]
for $N$ large enough, which concludes the proof.

\section{The upper bound} \label{sec:proof_upper_bound}
We briefly explain how one can deduce Theorem~\ref{th:ZNmain_teo} from Theorems~\ref{th:main2} and~\ref{pr:typical_height}. For any $\g , k>0$ let $c_{\g , k}$ be defined as in Theorem \ref{pr:typical_height}, while we take $d'_{\g , 2}$ as in Theorem \ref{th:main2} with $m=2$. Set 
	\[ d_{\g , k} := c_{ \g ,k} + d'_{\g , 2} , \qquad 
	t_{\g , k} := d_{\g ,k} N^2 \log N . \]
Then, if $\nu_N$ is any 	$k$-lukewarm distribution, we can define 
	\[ \Omega_{\g ,k} := \{ A : h(A) \leq c_{\g ,k} \log N \} \]
to have that, by Theorem~\ref{pr:typical_height},
	\[ \bbP \big( \Omega_{\g ,k} \big) \geq 1-N^{-\g} \]
for $N$ large enough. 
Take any two clusters $A_0$, $A'_0 $ in $\Omega_{\g, k}$ with $|A_0|=|A'_0|$, and let $(A(t))_{t\geq 0}$ and $(A'(t))_{t\geq 0}$ be two IDLA processes starting from $A_0$ and $A'_0$ respectively. Then Theorem \ref{th:main2} tells us that we can build $A(t_{\g , k})$ and $A'(t_{\g , k})$ on the same probability space so that 
	\[ \bbP \big( A(t_{\g , k}) \neq A'(t_{\g , k})  \big) \leq N^{-\g}  \]
for $N$ large enough.  
We thus gather that 
	\[ \begin{split} 
	 \| P(t_{\g , k}) - P' (t_{\g , k}  )\|_{TV}  & = 
	\sup_{S\subseteq \Omega } \big| \bbP (A(t_{\g , k} ) \in S ) - \bbP (A'(t_{\g , k} )\in S ) 
	\big| \\ & \leq 2 \bbP \big( A(t_{\g , k}) \neq A'(t_{\g , k}) \big) 
	\leq 2N^{-\g} 
	\end{split} \]
for $N$ large enough. Since $\g$ is arbitrary, this concludes the proof.

\section{The lower bound} \label{sec:lower_bound} 
In this section we specialise to stationary initial clusters, and prove that if two such clusters are sampled \emph{independently} from $\mu_N$, then the IDLA dynamics will remember from which one it started for at least order $N^2$ steps, as stated in Theorem \ref{th:ZNlower_bound}. 
We proceed as follows. We first recall the GFF fluctuations result by Jerison, Levine and Sheffield \cite{jerison2014internal2}, saying that the average IDLA fluctuations, appropriately measured, converge to the restriction of the Gaussian Free Field to the unit circle. We then use this to define an observable which we show to be large, with positive probability, for stationary IDLA clusters (cf.\ Proposition \ref{initial_imbalance}). Finally, we argue that a necessary condition for IDLA to forget the initial configuration is for this observable to reach $0$, and show that this takes time at least $\a N^2$ for some $\a >0$, thus proving the result.  

\subsection{Average IDLA fluctuations and the GFF}
Let us start by briefly recalling the average IDLA fluctuations result by Jerison, Levine and Sheffield \cite{jerison2014internal2}.  
Let $(A(t))_{t\geq 0}$ be an IDLA process on $\bbZ_N \times \bbZ$ starting from flat, i.e.\ $A(0) = R_0$. 
For $n=(n_1 , n_2) \in \bbZ_N \times \bbZ$, define the rescaled square 
	\[ Q_N(n) := \Big\{ (x,y) \in \bbT \times \bbR : 
	x \in \Big( \frac{n_1 -1}{N} , \frac{n_1}{N} \Big] , \; 
	y \in \Big( \frac{n_2 -1}{N} , \frac{n_2}{N} \Big] \Big\} \]
with side--length $1/N$ and $\big( \frac{n_1}{N} , \frac{n_2}{N} \big)$ as top--right corner, and set 
	\begin{equation} \label{rescale_fill}
	 A_N(t) := \bigcup_{n \in A(t)} Q_N(n) , 
	 \end{equation}
so that $A_N(t) \subset \bbT \times \bbR$. We use the rescaled and filled cluster $A_N(t)$ to define the \emph{discrepancy function} 
	\begin{equation} \label{DT}
	 D_{N,t} (x,y) := N \Big( \one_{A_N(t)} (x,y) - \one_{ \big\{ y\leq \frac{t}{N^2} \big\} } (x,y) \Big)  
	 \end{equation}
for $(x,y) \in \bbT \times \bbR$. Note that $D_{N,t}$ is supported on the symmetric difference $A_N(t) \Delta R_{t/N^2}$. 
Finally, let $\varphi \in C^{\infty} (\bbT \times \bbR )$ be of the form 
	\[ \varphi (x,y) = \sum_{|k|\leq K} \a_k(y) e^{2\pi i kx} \]
for some finite integer $K$ and with $\a_{-k} = \overline{\a_k}$, so that $\varphi$ is real--valued. Jerison, Levine and Sheffield proved the following. 

\begin{Theorem}[Theorem 3, \cite{jerison2014internal2}] \label{JLS_average}
Let $T=y_0 N^2$  and $\varphi$ be as above. Then, as $N \to \infty$, 
	\[
	 D_{N,T}(\varphi ) := \int_{\bbT \times \bbR } D_{N,T}(x,y) \varphi(x,y) \dd x \dd y 
	 \]
converges in distribution to a Gaussian random variable with mean zero and variance 
	\[ v(\varphi ) = \sum_{0<|k|\leq K} |\a_k(y_0)|^2 
	\Big( \frac{ 1-e^{-4 \pi |k| y_0}}{4\pi |k|} \Big) . \]   
\end{Theorem}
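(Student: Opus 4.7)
The plan is to reduce to a Gaussian limit for each Fourier mode via a martingale central limit theorem. By linearity in $\varphi$ and the finite Fourier expansion, it suffices to establish joint convergence of $D_{N,T}(\alpha_k(y) e^{2\pi i kx})$ for $0 < |k| \leq K$ to independent centered complex Gaussians with the advertised variances. The $k=0$ mode contributes $o(1)$ in probability: since $|A(T)| = T$ exactly and each horizontal level between $1$ and $\lfloor T/N \rfloor$ contains $N \pm O(\log N)$ sites with high probability by Theorem \ref{th:JLS}, the contribution of the horizontally-constant part of $\varphi$ tends to $0$, which is consistent with $k=0$ being absent from $v(\varphi)$.

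For $k \neq 0$, introduce the discrete harmonic function $h_k(x,y) = e^{2\pi i k x/N} \lambda_k^{y - y_0 N}$, where $\lambda_k > 1$ is the root of $\lambda + \lambda^{-1} = 4 - 2\cos(2\pi k/N)$; a Taylor expansion gives $\lambda_k = 1 + 2\pi|k|/N + O(N^{-2})$, so $\lambda_k^{y_0 N} \to e^{2\pi|k| y_0}$. Restricted to the strip $\{y \leq y_0 N + c \log N\}$, which contains the cluster with high probability by Theorem \ref{th:JLS}, $h_k$ is uniformly bounded in $N$. For each of the $T$ particles, release a walker from a uniform start $z_i$ on level $0$ and record its exit location $X_i \in \partial A(i-1)$; by optional stopping $\bbE[h_k(X_i) \mid \cF_{i-1}, z_i] = h_k(z_i)$, so $M_T^k := \sum_{i=1}^T \bigl(h_k(X_i) - h_k(z_i)\bigr)$ is a martingale.

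The identification with $D_{N,T}(\varphi_k)$ combines two observations. First, $\{X_1,\dots,X_T\} = A(T) \setminus R_0$ together with $\sum_{x\in\bbZ_N} e^{2\pi i kx/N} = 0$ for $k\neq 0$ give
\[
\sum_{i=1}^T h_k(X_i) = \sum_v \bigl(\one_{A(T)}(v) - \one_{R_{T/N}}(v)\bigr) h_k(v),
\]
which by Theorem \ref{th:JLS} is supported in an $O(\log N)$ neighbourhood of level $y_0 N$. Second, on that neighbourhood $\lambda_k^{y(v) - y_0 N} = 1 + o(1)$, and Taylor expansion of $\alpha_k(y)$ around $y_0$ shows that $M_T^k + \sum_i h_k(z_i)$ equals $\alpha_k(y_0)^{-1} N \cdot D_{N,T}(\varphi_k)$ up to lower order. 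The sum $\sum_i h_k(z_i)$ of i.i.d.\ centered terms (mean zero because $\sum_x e^{2\pi ikx/N}=0$) contributes an asymptotically Gaussian term that is uncorrelated with $M_T^k$: the cross-term $\bbE[(h_k(X_i)-h_k(z_i))\overline{h_k(z_j)}]$ vanishes for $j\leq i$ by the martingale property and for $j>i$ by independence.

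The conclusion is the martingale CLT in Dvoretzky--Brown form. Increments are $O(1)$, and summing the conditional variances yields $\bbE[|h_k(X_i)|^2 \mid \cF_{i-1}] - |h_k(z_i)|^2 \approx \lambda_k^{-2y_0 N}\bigl(\lambda_k^{2\lceil i/N\rceil} - 1\bigr)$; a Riemann-sum calculation gives $\var(M_T^k)/N^2 \to \tfrac{1 - e^{-4\pi|k|y_0}}{4\pi|k|} - y_0 e^{-4\pi|k|y_0}$, and adding the i.i.d.\ variance $\var(\sum_i h_k(z_i))/N^2 = (T/N^2) \lambda_k^{-2y_0 N} \to y_0 e^{-4\pi|k|y_0}$ exactly cancels the unwanted term, leaving $|\alpha_k(y_0)|^2 \cdot \tfrac{1-e^{-4\pi|k|y_0}}{4\pi|k|}$ in the limit; joint asymptotic independence across $k$ comes from orthogonality of distinct Fourier modes. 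The main obstacle is the concentration of the conditional variances, which requires knowing that the harmonic measure on $\partial A(i-1)$ seen from $z_i$ is close (in every Fourier mode) to the harmonic measure on $\partial R_{i/N}$; this is precisely where Theorem \ref{th:JLS} together with the mixing estimate of Lemma \ref{le:mix} becomes indispensable.
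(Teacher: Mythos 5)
The paper does not supply its own proof of this theorem: it is imported verbatim from Jerison, Levine, and Sheffield \cite{jerison2014internal2} (their Theorem~3), so there is nothing in the paper itself to compare your argument against. That said, your proposal is a reasonable reconstruction of the JLS strategy: decompose into Fourier modes, use the discrete harmonic function $e^{2\pi i k x/N}\lambda_k^{y}$ with $\lambda_k+\lambda_k^{-1}=4-2\cos(2\pi k/N)$ (the paper's $q_N$ in \eqref{qN} is exactly $N\log\lambda_1$) to build a martingale from the settling locations, and apply a martingale CLT. Your identity $\sum_i h_k(X_i)=\sum_v\bigl(\one_{A(T)}-\one_{R_{T/N}}\bigr)h_k(v)$ using $\sum_x e^{2\pi ikx/N}=0$ and the cancellation of the $y_0 e^{-4\pi|k|y_0}$ terms between the martingale variance and the i.i.d.\ source variance are both the right accounting. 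Two points deserve more care than a sketch allows: (i) asymptotic independence across modes needs a multivariate martingale CLT with convergence of the joint conditional covariance (orthogonality of modes only yields vanishing correlations), and (ii) the concentration of the conditional quadratic variation is the technical heart of the proof and requires quantitative control of harmonic measure on $\partial A(i-1)$ in each Fourier mode uniformly over $i\leq T$, which does not follow from Theorem~\ref{th:JLS} and Lemma~\ref{le:mix} alone without additional work; you flag this yourself, and it is indeed where the bulk of the JLS proof lives.
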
 

The next result tells us that the above theorem can be generalised to larger times. 
\begin{Theorem} \label{th:GFF1}
Let $T=CN^2 \log N$ for some absolute constant $C$ large enough. Then, 
as $N\to\infty$, 
	\[ \int_{\bbT \times \bbR} D_{N,T}(x,y) \varphi \Big( x,y - \frac{T}{N^2} \Big) \dd x \dd y \]
converges in distribution to a Gaussian random variable with mean zero and variance
	\[ v_\mu (\varphi ) = \sum_{0<|k|\leq K} \frac{|\a_k(0)|^2}{4\pi |k|}  . \]   
\end{Theorem}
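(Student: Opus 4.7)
The plan is to reduce Theorem~\ref{th:GFF1} to Theorem~\ref{JLS_average} by allowing its parameter $y_0$ to grow logarithmically with $N$. Write $\tilde\varphi(x,y) := \varphi(x, y - T/N^2)$ for the shifted test function, whose Fourier coefficients are $\tilde\a_k(y) = \a_k(y - T/N^2)$; in particular $\tilde\a_k(T/N^2) = \a_k(0)$. If Theorem~\ref{JLS_average} were directly applicable with $\tilde\varphi$ in place of $\varphi$ and $y_0 := T/N^2 = C\log N$, the conclusion would be immediate, since the resulting variance
\[
\sum_{0<|k|\leq K} |\a_k(0)|^2 \, \frac{1 - N^{-4\pi|k|C}}{4\pi|k|}
\]
tends to $v_\mu(\varphi)$ as $N \to \infty$. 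The obstruction is that Theorem~\ref{JLS_average} is stated for \emph{fixed} $y_0$, whereas here $y_0 = C\log N \to \infty$; the work therefore amounts to revisiting the martingale CLT underlying JLS and verifying that its conclusions hold uniformly for $y_0 \lesssim \log N$.

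Concretely, I would proceed as follows. First, Fourier-decompose the integrand into the finitely many modes $|k| \leq K$. The $k = 0$ mode vanishes, since $\int D_{N,T}\,dx\,dy = 0$ (the cluster contains exactly $T$ sites above level zero), so it suffices to show that for each $k \neq 0$ the functional $I_k := \int D_{N,T}(x,y)\, \a_k(y - T/N^2) e^{2\pi i k x}\, dx\,dy$ is asymptotically complex Gaussian with the claimed covariance. Following JLS, express $I_k = M_k(T)$ as the terminal value of a martingale in the natural IDLA filtration, whose $t$-th increment is a bounded random variable determined by the exit location of the $t$-th particle from $A(t-1)$. Next, compute the predictable quadratic variation: when the cluster is close to a filled rectangle of height $h(t) \approx t/N$, the Fourier transform in the horizontal coordinate of the exit distribution on the top row carries an exponential factor of the form $e^{-2\pi|k|(h(t) - y)/N}$ in the starting height~$y$, and summing such contributions over $t = 1, \ldots, T$ produces the JLS variance formula with $y_0 = C\log N$, which in the limit yields $v_\mu(\varphi)$. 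Finally, apply Brown's martingale CLT, checking the Lindeberg condition via a uniform bound on the size of each increment (they are $O(1/N)$ in the horizontal mode).

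The main obstacle is propagating the JLS error analysis \emph{uniformly} in $y_0 = C\log N$: the original argument absorbs various lower-order contributions into $o_N(1)$ terms using the fixed-$y_0$ hypothesis, but here these terms must be shown to remain negligible across a window of size $O(\log N)$ in the vertical coordinate and over $T = C N^2 \log N$ releases. This is precisely what Theorem~\ref{pr:height_bound} is designed to supply: the logarithmic height-fluctuation bound holds uniformly for all $t \leq N^m$, which ensures that throughout the dynamics the cluster remains close enough to a flat rectangle for the exit-distribution asymptotics invoked in the quadratic-variation computation to remain valid. Granting this uniformity, the martingale CLT delivers the claimed Gaussian limit with variance $v_\mu(\varphi)$.
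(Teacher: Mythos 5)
Your proposal is correct and takes essentially the same route as the paper, which says only that Theorem~\ref{th:GFF1} ``can be proved exactly as in \cite{jerison2014internal2}, Theorem 3, by replacing the maximal fluctuations result with Theorem~\ref{pr:height_bound}'' and skips the details. You correctly identify the obstruction (that Theorem~\ref{JLS_average} assumes $y_0$ fixed while here $y_0 = C\log N \to \infty$), correctly observe that the shifted test function's coefficient at $y_0$ is $\a_k(0)$ so the JLS variance converges to $v_\mu(\varphi)$, and correctly pinpoint that Theorem~\ref{pr:height_bound} is exactly the uniform-in-$t\le N^m$ height bound needed to push the JLS error analysis through the larger time window $T = CN^2\log N$.
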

\begin{Remark}
Note that, as observed in \cite{jerison2014internal2}, the exponential term in $v(\varphi )$ is due to the fact that the process started from flat. Indeed, this term does not appear in the limiting variance $v_\mu (\varphi )$, since $T$ is large enough for the process to have reached stationarity. 
\end{Remark} 
The above result can be proved exactly as in \cite{jerison2014internal2}, Theorem 3, by replacing the maximal fluctuations result with Theorem \ref{pr:height_bound} above. For this reason we choose to skip the proof.  

We now use the fact that $A(T)$ is close to a stationary cluster to control the size of the average fluctuations at stationarity. Let $A^\mu$ denote a stationary cluster, that is $A^\mu \sim \mu_N$, and denote by $A_N^\mu$ its rescaled and filled version as in \eqref{rescale_fill}. We start an IDLA process $(A^\mu (t))_{t\geq 0}$ from $A^\mu (0) = A^\mu$. Let $n_0 = |A^\mu (0) |$ be the number of particles in $A^\mu (0)$ above level zero. Then, if $(A(t))_{t\geq 0}$ is an IDLA process starting from flat, we have 
	\[ |A(n_0)| = |A^\mu (0) |=n_0  \]
and hence $|A(n_0 +t)| = |A^\mu (t)|=n_0+t$ for all $t\geq 0$. 
Moreover, by Theorems~\ref{pr:height_bound} and~\ref{pr:typical_height}, for any $\g >0$ there exists $a_\g <\infty$ such that, with $h_0 =  \max\{ h(A(n_0)) , h(A^\mu (0) ) \} $, it holds 
	\[ \bbP \big( h_0 > a_\g \log N \big) \leq 2N^{-\g} \]
for $N$ large enough. Consider the time--shifted IDLA process $A'(t) = A(n_0 +t)$ starting from logarithmic height with high probability. 
Then by Theorem \ref{th:main2} for any $\g >0$ there exists a finite constant $d_\g$ such that for $t_\g = d_\g N^2 \log N$ we can couple the clusters $A^\mu (t_\g ) , A'(t_\g)$ so that 
	\[ \bbP \big( A^\mu (t_\g ) \neq A'(t_\g) , \, h_0 \leq a_\g \log N \big) \leq 3N^{-\g}  \]
for $N$ large enough. 
Let $T= t_\g + n_0$. Recall from \eqref{DT} the definition of $D_{N,T}$, and define 
	\begin{equation} \label{Dmu}
	 D_{N,t_\g}^\mu (x,y) :=  N \Big( \one_{A_N^\mu (t_\g) } (x,y) - 
	 \one_{ \big\{ y\leq \frac{T}{N^2} \big\} } (x,y) \Big) . 
	\end{equation}
For $\varphi$ as above, introduce the random variables 
	\[ \begin{split} 
	X_N^\varphi & : = \int_{\bbT \times \bbR } D_{N,T}(x,y) 
	\varphi \Big( x,y-\frac{T}{N^2} \Big) \dd x \dd y , \\
	X_N^{\mu , \varphi} & :=  \int_{\bbT \times \bbR } D^{\mu} _{N,t_\g}(x,y) 
	\varphi \Big( x,y-\frac{T}{N^2} \Big) \dd x \dd y . 
	\end{split} \]
Then for any $\d >0$ we have 
	\[ \bbP ( X_N^{\mu , \varphi} >\d ) \geq \bbP ( X_N^\varphi >\d , A^\mu (t_\g ) = A (T)) 
	\geq  \bbP ( X_N^\varphi >\d ) - 5N^{-\g} . \]
Moreover, by Theorem \ref{th:GFF1} for any $\e >0$ we can take $N$ large enough to ensure that, if $\cN$ is a standard Gaussian random variable,  
	\[ \bbP (  	X_N^\varphi >\d ) \geq  \bbP \Big(\cN > \frac{\d }{\sqrt{v(\varphi )}} \Big) -\e 
	\geq \frac{1}{2} - \Big( \frac{\d }{\sqrt{2\pi v_\mu (\varphi )}} +\e \Big) ,  \]
from which 
	\begin{equation}\label{Xmu}
	 \bbP ( X_N^{\mu , \varphi} >\d )  \geq 
	\frac{1}{2} - \Big( \frac{\d }{\sqrt{2\pi v_\mu (\varphi )}} +\e + 5N^{-\g} \Big) , 
	\end{equation}
for any $\g ,\d , \e >0$ and $N$ large enough.


\subsection{The initial imbalance}
We now make a choice for the test function $\varphi$ in the above discussion. Let 
	\[ \phi (x,y) := \sin(2\pi x ) = \frac{e^{2\pi i x } - e^{-2\pi i x }}{2} , \]
for which $v_\mu (\phi ) = \frac{1}{8\pi}$. Then by \eqref{Xmu} we can take $\e = \d$ and $N$ large enough to get 
	\begin{equation} \label{choose_phi}
	 \bbP ( X_N^{\mu , \phi} >\d )  \geq 
	\frac{1}{2} - 4\d  .  
	\end{equation}
On the other hand, 
	\[\begin{split} 
	 X_N^{\mu , \phi} & 
	 = \int_{ \bbT \times \bbR} D_{N,t_\g}^\mu (x,y) \phi (x,y) \dd x \dd y 
	= N \int_{\bbT \times \bbR  } \phi (x,y) \one_{A_N^\mu (t_\g)} (x,y) \dd x \dd y 
	\\ & = N \int_{A^\mu_N(t_\g)} \phi (x,y) \dd x \dd y 
	\stackrel{(d)}{=} N \int_{A^\mu_N} \phi (x,y) \dd x \dd y , 
	\end{split}\] 
where $A^\mu \sim \mu_N$ and the last equality holds in distribution. 
In order to build a martingale, we now approximate $\phi$ by its discrete harmonic extension $\psi$ away from the line $\{ y=0 \}$ on the high probability event 
	\[ E_\mu  = \big\{ h(A^\mu ) \leq c_\g \log N \} , \]
with $c_\g$ as in Theorem~\ref{pr:typical_height} with $k=0$.
Following \cite{jerison2014internal2}, to define $\psi$ we introduce $q_N $ solution of 
	\begin{equation} \label{qN} 
	\cosh (q_N/N) = 2-\cos (2\pi /N) , 
	\end{equation}
so that $q_N = 2\pi + \cO (N^{-2})$, and for $n=(n_1 , n_2) \in \bbZ_N \times \bbZ$ set 
	\begin{equation} \label{psi}
	 \psi (n) := e^{q_N n_2/N} \sin \Big( \frac{2\pi n_1}{N}\Big) . 
	 \end{equation}
It is easy to check that $\psi$ is discrete harmonic on $\bbZ_N \times \bbZ$. Moreover, for $n=(n_1,n_2) \in A^\mu $ and $(x,y)\in Q_N(n)$ we have that on the good event $E_\mu$  
	\[ | \phi (x,y) - \psi (n) | = \Big| \sin (2\pi  x ) - e^{q_N n_2/N} \sin  \Big( \frac{2\pi n_1}{N}\Big) \Big| 
	\leq \frac{12\pi c_\g \log N}{N} , \]
from which 
	\[ N \int_{A^\mu_N} \phi (x,y) \dd x \dd y 
	= \frac{1}{N} \sum_{n\in A^\mu } \psi (n) 
	+ \cO \bigg( \frac{\log^2 N}{N} \bigg) \]
for $N$ large enough.
Combining this with \eqref{choose_phi}, we get 
	\[ \bbP \Big( \frac{1}{N} \sum_{n\in A^\mu } \psi (n) > \frac{\d}{2} \Big) 
	\geq \bbP \Big(  N \int_{A^\mu_N} \phi (x,y) \dd x \dd y >\d  , E_\mu \Big)  
	\geq  \frac{1}{2} - 4\d - \bbP (E_\mu) \geq \frac{1}{2} - 5\d \]
for any $\d >0$ and $N$ large enough. 
The same arguments can be used to prove the reverse inequality, thus obtaining  the following. 
\begin{Proposition} \label{initial_imbalance}
Let $A^\mu \sim \mu_N$ be a stationary IDLA cluster. For any $\d >0$ and $N$ large enough, it holds 
	\[ \bbP \bigg( \frac{1}{N} \sum_{n\in A^\mu} \psi (n) > 
	\d  \bigg) \geq \frac{1}{2} - 10\d , 
	\qquad 
    \bbP \bigg( \frac{1}{N} \sum_{n\in A^\mu} \psi (n) < 
	-\d  \bigg) \geq \frac{1}{2} -10 \d  . \]
\end{Proposition}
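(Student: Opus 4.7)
The first inequality of the proposition is already essentially contained in the displayed chain of estimates immediately preceding the statement. Indeed, taking the test function $\phi(x,y) = \sin(2\pi x)$ in Theorem \ref{th:GFF1} yields a centered Gaussian limit of variance $v_\mu(\phi) = 1/(8\pi)$, so inequality \eqref{Xmu} with $\epsilon = \delta$ gives $\bbP(X_N^{\mu,\phi} > \delta) \geq 1/2 - 4\delta - 5N^{-\gamma}$ for $N$ large. Approximating $\phi$ by its discrete harmonic extension $\psi$ on the high-probability event $E_\mu = \{h(A^\mu) \leq c_\gamma \log N\}$ (Theorem \ref{pr:typical_height} with $k=0$) produces the $\mathcal{O}(\log^2 N/N)$ error already recorded in the text, and one obtains $\bbP\big(\frac{1}{N}\sum_{n\in A^\mu}\psi(n) > \delta/2\big) \geq 1/2 - 5\delta$. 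Renaming $\delta' = \delta/2$ gives exactly the first inequality in the proposition (with a factor of $10$ instead of $5$).

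The only genuinely new content is therefore the reverse inequality. My plan is to run the \emph{same} argument with the test function $-\phi$ in place of $\phi$. The key observation is that Theorem \ref{th:GFF1} produces a Gaussian limit that is symmetric (mean zero), so both tails admit identical lower bounds: $\bbP(\mathcal{N} < -c) = \bbP(\mathcal{N} > c) \geq 1/2 - c/\sqrt{2\pi}$. Applied to $-\phi$, which satisfies $v_\mu(-\phi) = v_\mu(\phi) = 1/(8\pi)$ and $X_N^{\mu,-\phi} = -X_N^{\mu,\phi}$, inequality \eqref{Xmu} yields $\bbP(X_N^{\mu,\phi} < -\delta) \geq 1/2 - 4\delta - 5N^{-\gamma}$ for $N$ large.

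The discrete-harmonic approximation step is linear in the test function, so replacing $\phi$ by $-\phi$ replaces $\psi$ by $-\psi$, and the pointwise bound $|\phi(x,y)-\psi(n)|\leq 12\pi c_\gamma(\log N)/N$ on $E_\mu$ is unchanged. This yields
\[
\Big| N\int_{A_N^\mu} (-\phi)(x,y)\,\dd x\,\dd y - \frac{1}{N}\sum_{n\in A^\mu}(-\psi)(n)\Big| = \mathcal{O}\big(\log^2 N/N\big)
\]
on $E_\mu$, which is $o(\delta)$ for $N$ large. Combining with the Gaussian lower-tail bound above, one deduces $\bbP\big(\frac{1}{N}\sum_{n\in A^\mu}\psi(n) < -\delta/2\big) \geq 1/2 - 5\delta$ for $N$ large, and renaming $\delta$ as before produces the stated second inequality.

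There is no real obstacle here: the work has been done upstream in establishing Theorem \ref{th:GFF1} and the harmonic-extension approximation. The only thing to be careful about is bookkeeping of the constants ($4\delta$ from the Gaussian density at zero, $\epsilon = \delta$, $5N^{-\gamma}$ from the coupling of $A^\mu(t_\gamma)$ with $A'(t_\gamma)$, plus the $\mathcal{O}(\log^2 N/N)$ harmonic-approximation error, all of which fold comfortably into the factor $10$) and noting that the symmetry of the centered Gaussian limit is what lets the same argument run verbatim with $-\phi$ instead of $\phi$.
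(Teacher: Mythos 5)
Your proposal is correct and follows exactly the paper's argument: the paper's own treatment is the chain of estimates immediately before the proposition statement, followed by the remark that ``the same arguments can be used to prove the reverse inequality,'' and your $-\phi$/symmetry explanation is precisely what that remark is pointing to. Your constant bookkeeping (renaming $\delta'=\delta/2$ to turn $1/2-5\delta$ into $1/2-10\delta'$) also matches the paper's implicit convention.
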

Let  $c_2$ be defined as in Theorem \ref{pr:typical_height} with $k=0$. We define
	\begin{equation} \label{omega_precise}
	\begin{split} 
	 \Omega_\d & := \Big\{ A \in \Omega : h(A) \leq c_2 \log N \mbox{ and } 
	 \frac{1}{N} \sum_{n\in A} \psi (n) >\frac{\d}{20} \Big\} \, , 
	\\ 
	 \Omega '_\d & := \Big\{ A \in \Omega : h(A) \leq c_2 \log N \mbox{ and } 
	 \frac{1}{N} \sum_{n\in A} \psi (n) < -\frac{\d}{20} \Big\} 
	 \end{split}
	 \end{equation} 
to have that 
	\[ \begin{split} 
	\mu_N ( \Omega_\d ) & \geq 1- \mu_N ( \{ A : h(A)>c_2 \log N \} ) 
	- \bbP \bigg( \frac{1}{N} \sum_{n\in A^\mu} \psi (n) > 
	\d  \bigg) \\ & \geq \frac{1}{2} - \frac{1}{N^2} - \frac{\d}{2} \geq \frac{1}{2}-\d
	\end{split} \]
for $N$ large enough. Similarly, $\mu_N (\Omega_\d ') \geq \frac{1}{2} -\d$ for $N$ large enough, and thus \eqref{omegas} in Theorem~\ref{th:ZNlower_bound} is satisfied. 

\begin{Remark} \label{rem:u0}
It follows from the above result that if $A$ and $A'$ are two independent samples of $\mu_N$, then  for any $\d >0$ we can take $N$ large enough so that 
	\[ \bbP \bigg( \bigg| \frac{1}{N} \sum_{n\in A} \psi (n)
	- \frac{1}{N} \sum_{n\in A'} \psi (n) \bigg| > 2\d  \bigg) 
	\geq 2\Big( \frac{1}{2} - 10 \d \Big)^2 
	\geq \frac{1}{2} - 20\d 
	,\]
which can be made arbitrarily close to $1/2$ by taking $\d $ small enough. 
\end{Remark}

\subsection{The observable} \label{sec:observable}
We use the above remark to define a convenient observable which, loosely speaking, measures the difference in the horizontal imbalance of two IDLA processes. Take $A_0 \in \Omega_\d$ and $A'_0 \in \Omega '_\d $, and assume $|A_0| = |A'_0|$ without loss of generality, as if not then two IDLA processes starting from $A_0$ and $A'_0$ will never meet. We take $A_0 , A'_0$ as starting configurations of two IDLA processes $(A(t))_{t\geq 0}$ and $(A'(t))_{t\geq 0}$. 
\begin{Definition} [Imbalance] \label{def:imbalance}
For $A \in \Omega$, define the \emph{horizontal imbalance} of $A$ by
	\[ u_A := \frac{1}{N} \sum_{n\in A} \psi (n) 
	= \frac{1}{N}  \sum_{n\in A} e^{q_N n_2/N} \sin \Big( \frac{2\pi n_1}{N} \Big) , \]
with $q_N$ as in \eqref{qN}. 
\end{Definition}
We use this to define an observable $u(t)$ which measures the difference in the imbalance of $A(t)$ and $A'(t)$, namely 
	\[ u(t) := u_{A(t)}  - u_{A'(t)} = \frac{1}{N} \sum_{n\in A(t)} \psi (n) - 
	\frac{1}{N} \sum_{n\in A'(t)} \psi (n) .\]
\begin{Remark}
Since $\psi$ is discrete harmonic on $\bbZ_N \times \bbZ$, we have that $(u(t))_{t\geq 0}$ is a discrete time martingale. 
 \end{Remark} 
By Proposition \ref{initial_imbalance} and Remark \ref{rem:u0} we have 
	\begin{equation} \label{eq:first}
	\bbP \big( |u(0)| > \d  ) \geq \frac{1}{2} - 10 \d  
	\end{equation} 
for any $\d >0$ and $N$ large enough. Clearly $A(t) = A'(t) $ implies $u(t) =0$, so if we define 
	\[ T_0 = \inf\{ t\geq 0 : u(t) =0 \} \]
then for any $\a >0$ we have 
	\begin{equation} \label{eq:T0}
	\begin{split} 
	\bbP ( T_0 \leq \a N^2  , |u(0)|>\d  ) & \leq 
	\bbP \Big( \sup_{t\leq \a N^2 } |u(t) - u(0) | > \d \Big) \\ & 
	\leq \frac{1}{\d^{2}}  \bbE \Big( \sup_{t\leq \a N^2} |u(t) - u(0)|^2 \Big) 
	\leq \frac{c\bbE(Q) }{\d^2}
	\end{split} 
	 \end{equation}
for $c$ absolute constant and 
	\[ Q:= \sum_{t=1}^{\a N^2} \bbE ( |u(t) -u(t-1)|^2 | \cF_{t-1} )  , \]
where the last inequality in \eqref{eq:T0} follows by the Burkholder-Davis-Gundy inequality. Now, if we let $n_t$, $n'_t$ denote the settling locations of the $t^{th}$ walkers in the two IDLA processes, we have 
	\[ Q = \frac{1}{N^2 } \sum_{t=1}^{\a N^2} \bbE \Big( | \psi (n_t) - \psi (n'_t) |^2 
	\Big| \cF_{t-1} \Big) , \]
so 
	\[ \bbE (Q) \leq \bbE \Big( \frac{2}{N^2} \sum_{t=1}^{\a N^2} | \psi (n_t) |^2 \Big) 
	+ \bbE \Big( \frac{2}{N^2} \sum_{t=1}^{\a N^2} | \psi (n'_t) |^2 \Big)  . \]
To estimate the above expectations we have to control the height of the clusters $A(\a N^2 )$ and $A'(\a N^2)$, as the function $\psi$ grows exponentially with them. We explain how to control the first expectation, the second one following from the same arguments. 
Introduce the good event 
	\[ E := \{ h(A(\a N^2 )) \leq 100\a N \} \supseteq 
	\{h(A(\a N^2 )) \leq h(A_0) + 60\a N \} . \]
Then the a priori bound in Lemma \ref{le:a_priori} with $m=60$  gives 
	\[ \bbP (E^c ) \leq   e^{-100\a N} \]
for $N$ large enough. On $E$ we have 
	\[ \frac{1}{N^2} \sum_{t=1}^{\a N^2} | \psi (n_t)|^2 
	\leq \a \Big( \max_{n\in A (\a N^2 )} |\psi (n)|^2 \Big) 
	\leq \a \exp \Big\{ \frac{2 q_N}{N} h(A(\a N^2)) \Big\} 
	\leq C_\a \]
for some constant $C_\a $ depending only on $\a$, and $N$ large enough. 
On $E^c$, on the other hand, we trivially have that $h(A(\a N^2)) \leq h(A_0) + \a N^2  \leq 2\a N^2 $, from which 
	\[ \frac{1}{N^2} \sum_{t=1}^{\a N^2} | \psi (n_t)|^2 
	\leq \a \Big( \max_{n\in A (\a N^2 )} |\psi (n)|^2 \Big) 
	\leq \a \exp \Big\{ \frac{2 q_N}{N} h(A(\a N^2)) \Big\} 
	\leq \alpha \exp \big\{ 32 \a N \big\}  , \]
since $q_N \leq 8$ for $N$ large enough. In all, we have found 
	\[ \begin{split} 
	\bbE \left( \frac{1}{N^2} \sum_{t=1}^{\a N^2} | \psi (n_t)|^2  \right) 
	 & \leq  \bbE \left( \frac{1}{N^2} \sum_{t=1}^{\a N^2} | \psi (n_t)|^2  ; \, E \right)  
	 +\bbE \left( \frac{1}{N^2} \sum_{t=1}^{\a N^2} | \psi (n_t)|^2  ; \, E^c \right) 
	 \\ & \leq C_\a + \a e^{32 \a N } \bbP (E^c) 
	 \leq C_\a + \a e^{-68 \a N} 
	 \leq 2C_\a 
	 \end{split}\]
for $N$ large enough. Similarly one can control the same expectation involving $A'(\a N^2)$. 
Thus 
	\[   \bbP ( T_0 \leq \a N^2  , |u(0)|>\d  ) \leq \frac{8c C_\a }{\d^2 } , \]
where $c$ is the absolute constant in \eqref{eq:T0}. Let $\e$ be as in Theorem \ref{th:ZNlower_bound}. Since $C_\a \to 0$ as $\a \to 0$, we can take $\a$ small enough so that $ \frac{8c C_\a }{\d^2} \leq  \e \big( \frac{1}{2} - 10 \d \big)$, to find 
	\[ 	 \bbP ( T_0 \leq \a N^2  , |u(0)|>\d  ) \leq  \e \Big( \frac{1}{2} - 10 \d \Big) . \]
Finally, putting this together with \eqref{eq:first} we gather that 
	\[  \bbP \big( T_0 \leq \a N^2 \big|  |u(0)|>\d  \big) 
	=  \frac{\bbP ( T_0 \leq \a N^2  , |u(0)|>\d  )}{ \bbP (  |u(0)|>\d  )} \leq  \e , \]
which concludes the proof of Theorem \ref{th:ZNlower_bound}.

\appendix

\section{Proof of Lemma \ref{postpone}} \label{app:postpone}
Recall that we consider a random walk $X$ on $\frac{1}{N} \bbN$, with a reflecting barrier at $0$, with transition probabilities given by 
	\begin{equation}\label{rwXapp}
	 \begin{split}
	& X_0 = 0 , \qquad \bbP \Big( X_{i+1} =1 -\frac{1}{N} | X_{i} =0 \Big) =1 , \\
	& X_{i+1} - X_i = \begin{cases}
	1-\frac{1}{N} , \mbox{ with probability } \frac{1-\eta}{N} , \\
	-\frac{1}{N} , \mbox{ otherwise }  
	\end{cases} \quad \mbox{ for } X_i > 0.
	\end{split} 
	\end{equation}
We aim to show that 
	\[ q:= \bbP_{1-1/N} ( X \mbox{ reaches } [\cE^* ,\infty ) \mbox{ before } 0 ) 
	\leq e^{-N} \]
for $N$ large enough. 
Denote $\bbP_{1-1/N} $ simply by $\bbP$ to shorten the notation, and let $T_{0,\cE^*}^X$  be the first time $X$ reaches either $0$ or $[\cE^* ,\infty )$. Then  we have 
	\[ \begin{split} 
	q  = \bbP \Big( X_{T_{0,\cE^*}^X} \geq  \cE^* \Big) 
	& \leq  \bbP \Big( X_{T^X_{0,\cE^*}} \geq  \cE^* , 
	T^X_{0,\cE^*} \leq  N^3 \log N \Big) + 
	\bbP \Big( T^X_{0,\cE^*} > N^3 \log N \Big) 
	\\ & \leq \bbP \bigg( \sup_{t\leq  N^3 \log N} X_t^{{T^X_{0,\cE^*}}} 
	\geq \cE^* \bigg) 
	+ \bbP \Big( T^X_{0} >  N^3 \log N  \Big) \, , 
	\end{split} \]
where $T^X_0$ is the first time the random walk $X$ reaches $0$, and $X^{T_{0,\cE^*}^X}$ denotes the process $X$ stopped upon reaching $0$ or $[\cE^* , \infty )$. We bound the two  terms above separately. 

For the rightmost term, it is easy to check that $M(t) = X_t + \frac{\eta t}{N}$ is a martingale up to time $T_0^X$. If we thus define $M_0(t) = M(t\wedge T_0^X)$, then $(M_0(t))_{t\geq 0}$ is a martingale for all times, with increments bounded by $1$. It therefore follows from Azuma's inequality that for $t\geq 2N/\eta$, 
	\[ \begin{split} 
	\bbP \big( T_0^X > t \big) & = \bbP ( T_0^X > t , X_t \geq 1/N) 
	= \bbP \Big( T_0^X >t , X_t - X_0 \geq -1+\frac{2}{N} \Big) \\ & 
	= \bbP \Big( T_0^X > t , M(t)-M(0) \geq \frac{\eta t+2}{N} -1 \Big) 
	\leq   \bbP \Big( M_0(t) - M_0(0) > \frac{\eta t}{N} -1 \Big) \\ & 
	\leq \exp \Big( - \frac{ ( \frac{\eta t}{N} -1 )^2}{8t} \Big) \leq 
	\exp \Big( - \frac{\eta^2 t}{32 N^2} \Big) , 
	\end{split} \] 
which can be made smaller than $e^{-2N}$ by choosing  $t \geq N^3\log N$.

For the remaining term, 
again by Azuma we have
	\[ \bbP \bigg( \sup_{t\leq N^3 \log N} X_t^{{T^X_{0,\cE^*}}} 
	\geq \cE^* \bigg) \leq 
	\bbP \bigg( \sup_{t\leq N^3 \log N } M (t \wedge {T^X_{0,\cE^*}} )  
	\geq \cE^* \bigg) \leq 
	\exp \Big( - \frac{ (\cE^* )^2}{8 N^3 \log N } \Big) 
	\leq e^{-2N}  \]
for $N$ large enough. In all, we have found that 
	\[ q  = \bbP \Big( X_{T_{0,\cE^*}^X} \geq  \cE^* \Big)  
	\leq e^{-2N} + e^{-2N}  \leq e^{-N} \]
for $N$ large enough. 
This concludes the proof of Lemma \ref{postpone}. 
\section{Proof of Lemma \ref{post2}} \label{app:post2}
We have, for any $\l >0$, 
	\[ \bbP \bigg( \sum_{i=1}^{\g t} T^{X,(i)}_{\cE^*}  \leq t \bigg)  \leq 
	\bbP \bigg( \exp \Big( - \l \sum_{i=1}^{\g t} T^{X,(i)}_{\cE^*}  \Big) \geq e^{-\l t} \bigg)  
	\leq e^{\l t} \Big[ \bbE \Big( e^{-\l T_{\cE^*}^X} \Big) \Big]^{\g t} . \]
To bound the right hand side above, recall that $T_{\cE^*}^X \geq N_0 N$ for $N_0 \sim $Geometric$(p)$, with $p \leq e^{-N}$. Thus if we let $\hat{N}_0 $ be a Geometric random variable of parameter $\hat{p} := e^{-N}$, then we find 
	\[ \begin{split} 
	\bbE \Big( e^{-\l T_{\cE^*}^X} \Big) & = 
	\int_0^\infty \bbP \big( e^{-\l T_{\cE^*}^X}  \geq t \big) \dd t 
	= \int_0^1 \bbP \Big( T_{\cE^*}^X \leq -\frac{\log t}{\l} \Big) \dd t 
	\\ & \leq \int_0^1 \bbP \Big( \hat{N}_0 \leq -\frac{\log t}{\l N } \Big) \dd t 
	\leq \int_0^1 \Big( 1 - (1-\hat{p} )^{-\frac{\log t }{\l N } } \Big) \dd t 
	\\ & \leq  1- \int_0^1  \exp \Big( \frac{2\log t }{\l N e^{N} } \Big) \dd t 
	= 1- \frac{\l N e^N }{\l N e^N +2 } ,
	\end{split} \]
where for the last inequality we have used that $\log (1-x) \geq -2x $ for $x\in (0,1/2)$. 
Thus 
	\[ \begin{split} 
	\bbP \bigg( \sum_{i=1}^{\g t} T^{X,(i)}_{\cE^*}  \leq t \bigg) & \leq 
	e^{\l t } \bigg( 1- \frac{\l N e^N }{\l N e^N +2 }\bigg)^{\g t} 
	\\ & =  \exp \Big\{ \l t + \g t \log \Big( 1- \frac{\l N e^N }{\l N e^N +2 } \Big)  \Big\}
	\\ & = \exp \Big\{ -\l t \Big( -1 + \frac{ \g N e^N}{\l N e^N +2} \Big) \Big\} 
	\end{split} \]
since $\log (1-x) \leq -x$. 
Taking $\l = \frac{1}{N e^{N}} $ and recalling that $\g = \frac{6}{N e^{N}} $, it is then easy to check that  
	\[ \exp \Big\{ -\l t \Big( -1 + \frac{ \g N e^N}{\l N e^N +2} \Big) \Big\} 
	 \leq \exp (-\l t) =
	 \exp \Big( - \frac{t}{ N e^{N} } \Big) , \]
which concludes the proof.

\section{Logarithmic fluctuations for IDLA at large times} \label{app:survey}

We survey the proof of the logarithmic fluctuations bound by Jerison, Levine and Sheffield for IDLA on the cylinder graph $\bbZ_N \times \bbZ$, and extend it to larger times to prove Theorem~\ref{th:JLS}. 

\subsection{A priori bound} \label{sec_apriori}
We obtain an a priori bound on the height of $A(T)$ following the outer bound argument by Lawler, Bramson and Griffeath \cite{lawler1992internal}, for arbitrary starting configurations.  
\begin{Lemma} \label{le:a_priori}
Let $(A(t))_{t\geq 0}$ denote an IDLA process on $\bbZ_N \times \bbZ$, and let $h_0 = h(A(0))$ denote its initial height. Then for any $m\geq 3$ there exists $\b =\b (m) \in (0,1)$ such that, for $T\gg N \log N $ and $N$ large enough, it holds 
	\[ \bbP \Big( A(T) \nsubseteq R_{h_0 + \frac{mT}{N}} \Big) \leq \b^{T/N} . \]
Thus if, in particular, the process starts from the flat configuration $A(0)=A_0$, then $h(A(T)) \leq mT/N$ with high probability for $N$ large enough. 
\end{Lemma}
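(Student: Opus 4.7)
Following the Lawler--Bramson--Griffeath outer-bound argument, the plan proceeds in three steps, plus a comment on the main technical obstacle.

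\textbf{Step 1: Reduction to $A(0) = R_0$.} Let $S := R_{h_0} \setminus A(0)$; since $S$ is finite and disjoint from $A(0)$, one has $A(0) \oplus S = R_{h_0}$ deterministically. The abelian property (Proposition~\ref{pr:Abelian}) then gives the equality in law
\begin{equation*}
R_{h_0} \oplus \{z_1, \ldots, z_T\} = (A(0) \oplus S) \oplus \{z_1, \ldots, z_T\} \stackrel{(d)}{=} A(T) \oplus S \supseteq A(T),
\end{equation*}
so on a common probability space $A(T) \subseteq \tilde A(T)$, where $\tilde A(T)$ denotes IDLA started from $R_{h_0}$. Vertical translation invariance of the cylinder gives $\tilde A(T) \stackrel{(d)}{=} (0,h_0) + A^{\mathrm{flat}}(T)$, with $A^{\mathrm{flat}}$ IDLA from $R_0$. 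Hence it suffices to prove the bound for $A(0) = R_0$, with target height $K := \lfloor mT/N \rfloor$.

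\textbf{Step 2: Single-walker survival estimate.} For a single walker to settle at height $\geq K$, it must avoid all empty sites at each intermediate level $j \in \{1, \ldots, K-1\}$, so every site it visits at level $j$ must belong to $A(t-1) \subseteq A(T)$. Writing $n_j := |A(T) \cap \{y=j\}|$, the horizontal mixing of Lemma~\ref{le:mix} (applied between consecutive upward level crossings) approximates the walker's arrival distribution at level $j$ by the uniform measure on $\bbZ_N$ up to error $N^{-\gamma}$, for any prescribed $\gamma$. The conditional survival probability at level $j$ is therefore at most $n_j/N + N^{-\gamma}$, and iterating via the strong Markov property at each level, combined with AM--GM and the constraint $\sum_{j=1}^{K-1} n_j \leq T$ (only $T$ new sites are added above level $0$),
\begin{equation*}
\bbP(\text{walker settles at height} \geq K) \leq \prod_{j=1}^{K-1}\!\left(\frac{n_j}{N}+N^{-\gamma}\right) \leq \left(\frac{T}{(K-1)N}+N^{-\gamma}\right)^{K-1} \leq \beta_0^{K}
\end{equation*}
for some $\beta_0 = \beta_0(m) \in (0,1)$ when $m \geq 3$ and $N$ is large enough (so $N^{-\gamma} < 1/m$).

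\textbf{Step 3: Union bound.} Summing the per-walker estimate over the $T$ walkers,
\begin{equation*}
\bbP\bigl(A(T) \not\subseteq R_{K}\bigr) \leq T\,\beta_0^{K} = T\,\beta_0^{mT/N}.
\end{equation*}
For $T \gg N\log N$ the polynomial prefactor $T$ is absorbed into the exponential (since $\log T \ll T/N$), giving the desired bound $\beta^{T/N}$ for some $\beta = \beta(m) \in (0,1)$, as claimed.

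\textbf{Main obstacle.} The crux is making Lemma~\ref{le:mix} rigorously applicable at each level crossing: strictly speaking, its hypothesis requires a vertical displacement of order $N\log N$, which is \emph{not} achieved at a single level crossing. The fix is to apply the mixing estimate at the scale of \emph{super-levels}, i.e., groups of $\Theta(N\log N)$ consecutive levels, between which the walker has provably mixed horizontally. The AM--GM bound on $\prod n_j$ then yields the same exponential conclusion (up to constants), at the cost of logarithmic factors that are absorbed into the assumption $T \gg N\log N$.
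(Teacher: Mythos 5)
Your Step~1 (reducing to $A(0)=R_0$ via the Abelian property) is correct and is actually cleaner than what the paper does: the paper runs the recursion below directly with the $h_0$ offset built into the indices. The two are equivalent, and your reduction is a valid alternative.

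The genuine gap is in Step~2. Your product bound $\prod_{j=1}^{K-1}\bigl(n_j/N+N^{-\gamma}\bigr)$ needs, for each $j$, the conditional estimate
\[
\bbP\Bigl(\text{first hit of level }j\text{ occupied}\;\Big|\;\text{walker survived levels }1,\ldots,j-1\Bigr)\leq \frac{n_j}{N}+N^{-\gamma},
\]
and this fails for two reasons, the second of which is fatal. First, as you flag, Lemma~\ref{le:mix} needs $\Theta(N\log N)$ vertical displacement. Second, conditioning on having survived levels $1,\ldots,j-1$ biases the walker's horizontal position toward the occupied part of the cluster. If, say, $A(T)$ is a thin column at $x=0$, then a walker conditioned to have survived up to level $j-1$ is near $x=0$, and its first hit of level $j$ is also near $x=0$ with probability close to $1$, not $n_j/N$. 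No amount of mixing on the scale of a single level crossing removes this. Your proposed super\mbox{-}level fix does remove the bias (by the strong Markov property at each boundary and Lemma~\ref{le:mix}), but it is quantitatively too lossy: with boundaries at spacing $L=\Theta(N\log N)$ and $K/L$ factors, AM--GM gives a base
\[
\frac{L}{K}\sum_i\frac{n_{j_i}}{N}+N^{-\gamma}\;\leq\;\frac{L}{K}\cdot\frac{T}{N}+N^{-\gamma}\;=\;\frac{L}{m}+N^{-\gamma}\;\gg\;1,
\]
so the product bound is trivial (using $\sum_i n_{j_i}\leq N\cdot K/L$ instead gives base $\leq 1+N^{-\gamma}$, still useless). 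Discarding the constraints at the intermediate levels costs too much.

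The paper's proof avoids both issues by a moment recursion (following Lawler--Bramson--Griffeath). Set $\mu_k(t):=\bbE\,|A(t)\cap\{y=k\}|$. The $(t+1)$-th walker's trajectory is an SRW independent of $A(t)$, started uniformly on level $0$; by horizontal translation invariance of the cylinder, its \emph{first hit of level $k-1$ is exactly uniform on $\bbZ_N$}, unconditionally --- no mixing lemma is needed. Since settling at level $\geq k$ forces this first hit to land on an occupied site, $\mu_k(t+1)-\mu_k(t)\leq \frac{1}{N}\mu_{k-1}(t)$. Iterating yields $\mu_k(T)\leq N^{-(k-h_0-1)}T^{k-h_0}/(k-h_0)!$, and Markov's inequality at $k=h_0+\lfloor mT/N\rfloor+1$ gives the claim with $\beta\in\bigl((e/m)^m,1\bigr)$. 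This convolution structure implicitly uses the constraint at \emph{every} level while sidestepping any conditional-distribution claim about the walker's horizontal position, which is precisely what your product/AM--GM scheme cannot recover once coarsened to super-levels.
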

\begin{proof}
Let $Z_k (t) := | A(t) \cap \{ y=k\} |$ denote the number of particles in $A(t)$ at level $k$, and set $\mu_k(t) := \bbE (Z_k(t))$. Then 
	\[ \bbP \Big( A(T) \nsubseteq R_{h_0 + \frac{mT}{N}} \Big)
	 \leq \bbP \Big( Z_{h_0 +\frac{mT}{N}+1}(T) \geq 1 \Big) 
	\leq \mu_{h_0 + \frac{mT}{N}+1} (T) . \]
We claim that 
	\begin{equation} \label{LBG_up}
	 \mu_k(t) \leq \Big( \frac{1}{N} \Big)^{k-h_0-1} \frac{t^{k-h_0}}{(k-h_0)!} 
	 \end{equation}
for all $k>h_0$ and $t\geq 0$. 
Indeed, clearly $\mu_1(t) \leq N$ and $\mu_k(0) =0$  for all $k>h_0$. For other values of $k,j$  we have 
	\[ \begin{split} 
	\mu_k(t+1) - \mu_k(t) & = \bbE ( Z_k (t+1)  - Z_k (t) ) 
	= \bbP ( Y_{t+1} \in A(t)^c \cap \{ y=k\} ) 
	\\ & \leq \frac{1}{N} \bbE ( | A(t) \cap \{ y=k-1\} | ) 
	= \frac{1}{N} \mu_{k-1} (t) , 
	\end{split} \]
where in the above inequality we have used that the probability that the $(t+1)^{th}$ walker reaches level $k-1$ inside $A(t)$ is maximised when $A(t)$ is completely filled up to level $k-2$. Thus for $k>h_0$ 
	\[ \mu_k ( t) = \sum_{s=0}^{t-1} ( \mu_k(s+1) - \mu_k(s) ) \leq 
	\frac{1}{N} \sum_{s=0}^{t-1} \mu_{k-1}(s) , \]
and \eqref{LBG_up} follows by a simple iteration. 
Now take $t=T$, $k =h_0 +\frac{mT}{N}+1$ and recall that $k! \geq k^k e^{-k}$ to get 
	\[ \mu_{h_0 +\frac{mT}{N}+1} (T) 
	\leq \Big( \frac{1}{N} \Big)^{\frac{mT}{N} } 
	\frac{T^{\frac{mT}{N}+1} }{(\frac{mT}{N} +1)!} 
	\leq N \Big[ \Big( \frac{e}{m} \Big)^m \Big]^{\frac{T}{N}}  \leq \b^{T/N} 
	\] 
for any $  \b \in \big(\big(\frac{e}{m}\big)^m , 1)$ and $N$ large enough. 
\end{proof}
\begin{Remark} \label{remark_smallT}
By the above result with $h_0=0$, it suffices to prove Theorem \ref{th:JLS} for large~$T$. Suppose, indeed, that 
there exists a finite constant $b$ such that $T \leq b N \log N$. Since $\frac{T}{N} \leq b \log N$, it suffices to take $a > b$ to have that the inner bound is trivially satisfied. For the outer bound we set $c= \frac{a+b}{3}$ and note that, as long as $a \geq 2b$, it holds 
	\[ \begin{split} 
	\bbP \Big( \big\{ R_{\frac{T}{N} -a\log N}  \subseteq A(T) \subseteq R_{\frac{T}{N} + a \log N } \big\}^c
	\Big) & = \bbP \Big(  A(T) \nsubseteq R_{(a+b) \log N } \Big)
	\\ & \leq \bbP \Big(  A(cN\log N ) \nsubseteq R_{3c \log N } \Big)  
	\\ & \leq \beta ^{c \log N } = N^{-c \log \frac{1}{\b}}, 
	\end{split} \]
which can be made smaller than $N^{-\g}$ by taking $a \geq \max \big\{ 2b ; \frac{3\g}{\log 1/\b } -b \big\}$. In light of this observation, from now on we can assume that $T\gg N \log N$ as $N\to \infty$. 
\end{Remark}

 The proof of Theorem \ref{th:JLS} follows an iterative argument, which we now sketch. 
\begin{Definition}
A point $(x,y)$ with $y\geq 0$ is said to be: 
\begin{itemize}
\item  $m$-early if $(x,y) \in A((y-m)N)$, 
 \item $\ell$-late if $(x,y) \notin A((y+\ell ) N)$.  
 \end{itemize}
\end{Definition}
For $t\leq T$, let 
	\[ \cE_m [t] := \bigcup_{(x,y) \in A(t) } \{ (x,y) \mbox{ is } m\mbox{-early}\} , 
	\qquad 
	\cL_\ell [t] := \bigcup_{(x,y) \in R_{t/N}} \{ (x,y) \mbox{ is } \ell \mbox{-late}\} . 
	\]
Clearly, 
	\[ \big\{ R_{\frac{T}{N} -a\log N}  \subseteq A(T) \subseteq R_{\frac{T}{N} + a \log N }  \big\}^c 
	\subseteq \cE_{a\log N } [T] \cup \cL_{a\log N } [T] ,
	\]
so we bound the probability of the right hand side. Note that the a priori bound in the previous section (cf.\ Lemma \ref{le:a_priori}) with $m=3$ tells us that there exists an absolute constant $\beta \in (0,1)$ such that 
	\[ \bbP \Big( \cE_{\frac{2T}{N}}[T] \Big) \leq \b^{T/N} \ll \b^{\log N}\]
for $N$ large enough.  We use this to initialise the iteration, which consists of showing that, in turn, 
\begin{itemize}
\item no $m$-early point implies no $\ell$-late point ($\ell \ll m $), and 
\item no $\ell$-late point implies no $m'$-early point ($m' \asymp \ell$). 
\end{itemize}
To perform the above steps, we will use an explicit discrete harmonic function with a pole close to the early/late point to build a martingale, which will be then controlled via its quadratic variation.

\subsection{No early points implies no late points}
The main goal of this section is to prove the following. 
\begin{Proposition} \label{pr:early_late}
For any $\g >0$ there exists a finite constant $C=C(\g) $, depending only on $\g$,  such that, if $\ell \geq C \log N$ and $m \leq \ell^2 / C\log N $, then 
	\[ \bbP ( \cL_\ell [T] \cap \cE_m [T]^c ) \leq N^{-(\g +1)} \]
for $N$ large enough. 
\end{Proposition}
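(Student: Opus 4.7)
The plan is to prove Proposition~\ref{pr:early_late} by a union bound over candidate late points, combined for each candidate with a Freedman-type martingale concentration argument built from a discrete harmonic function with a pole. First I will reduce: on the event $\cL_\ell[T]\cap \cE_m[T]^c$ there exists $z^*=(x^*,y^*)$ with $y^*\leq T/N-\ell$ and $z^*\notin A(T^*)$, where $T^*:=(y^*+\ell)N$, while simultaneously $A(t)\subseteq R_{t/N+m}$ for every $t\leq T$. By the a priori bound (Lemma~\ref{le:a_priori}) I may assume that the height of $A(T)$ is $\cO(T/N)$, so the number of candidate late points is polynomial in $N$. It will suffice to prove that for each fixed $z^*$ the probability of the corresponding bad event is at most $N^{-(\gamma+2)}$, say.

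For fixed $z^*$, I will construct a discrete harmonic function $H_{z^*}$ on the cylinder, adapted to the relevant geometry. Concretely, set $\cD:=\bbZ_N\times[y^*-L,\,y^*+L]$ (intersected with $\{y\geq 0\}$) for $L\asymp\ell+m$, and take $H_{z^*}$ to be the Green's function of simple random walk on $\bbZ_N\times\bbZ$ killed on $\partial\cD$, with pole at $z^*$, normalized so that $H_{z^*}(z^*)=1$. Standard potential-theoretic estimates give $H_{z^*}(w)\asymp \log(\min(N,L)/\|w-z^*\|)$ in the near regime and a linear decay in $(L-|y-y^*|)$ in the far ``one-dimensional'' regime $|y-y^*|\gg N$. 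The key facts are that $\bar H:=N^{-1}\sum_{x\in\{y=0\}}H_{z^*}(x)$ and the maximum $\|H_{z^*}\|_\infty$ are both $\cO(\log N)$ after the natural normalization, and that $\sum_{w\in\cD}H_{z^*}(w)^2$ is $\cO(\mathrm{polylog}\,N)$ times the volume.

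Next, I build the JLS martingale
\[
M_k := \sum_{j=1}^k \Bigl(H_{z^*}(Z_j) - \bbE\bigl[H_{z^*}(Z_j)\,\big|\,\cF_{j-1}\bigr]\Bigr), \qquad k\leq T^*,
\]
with $Z_j$ the settling location of the $j$-th walker. Using optional stopping on the walk's trajectory up to exit from $\cD\cap A(j-1)^c$, and the fact that $H_{z^*}$ is discrete harmonic off $z^*$, the conditional expectation decomposes as $\bar H+(\text{visit term at }z^*)+(\text{exit through }\partial\cD)$. The no-early assumption $A(j-1)\subseteq R_{(j-1)/N+m}$ keeps the ``exit through $\partial\cD$'' contribution negligible for appropriate $L$, and summing over $j\leq T^*$ yields an identity of the form
\[
\sum_{j=1}^{T^*}H_{z^*}(Z_j) \;=\; T^*\bar H \;+\; V_{T^*} \;+\; M_{T^*},
\]
where $V_{T^*}\geq 0$ counts weighted visits of walkers to $z^*$. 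On the late event $V_{T^*}=0$ and each summand on the left is at most $H_{z^*}$ evaluated at an unoccupied site neighboring $A(j-1)$; combined with the no-early bound (which forces the $Nm$ ``missing'' sites in $R_{y^*+\ell+m}\setminus A(T^*)$ to lie in a thin slab), a short geometric argument shows the left-hand side is smaller than $T^*\bar H$ by a deterministic amount $\asymp \ell$, forcing $|M_{T^*}|\gtrsim \ell$. Concentration then closes the argument: the increments of $M$ are bounded by $2\|H_{z^*}\|_\infty=\cO(\log N)$, and the predictable quadratic variation satisfies $\langle M\rangle_{T^*}\leq \sum_{j=1}^{T^*}\bbE[H_{z^*}(Z_j)^2|\cF_{j-1}]=\cO(m\,\mathrm{polylog}\,N)$, the $m$ coming from the thickness of the slab where walkers can deposit above level $(j-1)/N-\cO(\log N)$. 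Freedman's inequality yields $\bbP(|M_{T^*}|\geq c\ell)\leq \exp(-c'\ell^2/m)$ up to log factors, which under $\ell\geq C\log N$ and $m\leq \ell^2/(C\log N)$ is at most $N^{-(\gamma+2)}$ for $C=C(\gamma)$ large, and the union bound closes.

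The main obstacle I expect is the bookkeeping of the martingale identity and the quadratic-variation bound in the cylinder geometry, where $H_{z^*}$ transitions between the two-dimensional logarithmic regime $\|w-z^*\|\lesssim N$ and the one-dimensional linear regime $|y-y^*|\gg N$: the choice of $L$ must make both the ``exit through $\partial\cD$'' error and the quadratic variation simultaneously controllable, and the walker's uniform-on-level-zero starting distribution must be handled via the horizontal mixing already captured in Lemma~\ref{le:mix} / Proposition~\ref{pr:coupling}. Related subtleties include adapting $H_{z^*}$ to the boundary $\{y=0\}$ (a reflection or killing at $\{y=0\}$ works) and showing that the "signal" computation survives the approximation when $\ell$ is only logarithmic.
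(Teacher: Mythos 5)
Your overall skeleton --- union bound over candidate late points, a martingale built from a discrete harmonic function with a pole at the candidate, a deterministic ``signal'' of order $\ell$ coming from lateness, a quadratic--variation bound of order $m$ coming from the no--early hypothesis, and an exponential concentration inequality giving $e^{-c\ell^2/m}\le N^{-(\gamma+2)}$ --- is exactly the skeleton of the paper's proof. The two differences are genuine but superficial in one case and substantive in the other. (i) Concentration: the paper embeds the walks in continuous time via lattice Brownian motions so that $M_\z$ becomes a continuous martingale and the Dubins--Schwarz representation plus a Gaussian tail gives $\bbP(S_\z(T_1)\le s,\ M_\z(T_1)\le-\ell)\le e^{-\ell^2/2s}$; your Freedman route on the discrete martingale is a legitimate alternative and buys nothing significant either way. (ii) The harmonic function: the paper does \emph{not} use a Green's function. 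It uses the Poisson kernel
\[
H_\z(z)=\bbP_z\bigl(\text{SRW reaches level }\z_y\text{ for the first time at }\z\bigr),
\]
which is discrete harmonic on $\{y<\z_y\}$, equals $\one_{\{z=\z\}}$ on level $\z_y$, and tends to $1/N$ deep below. This choice is not cosmetic: it is what makes the signal computation a two--line deterministic identity,
\[
M_\z(T_1)\ \le\ \underbrace{\textstyle\sum_{z\in R_{\z_y-1}}\bigl(H_\z(z)-\tfrac1N\bigr)}_{=0\ \text{(mean value)}}\ +\ \underbrace{\textstyle\sum_{z\in A_\z(T_1),\,z_y=\z_y}\bigl(0-\tfrac1N\bigr)}_{=-\,(\ell+1)}\ \le\ -\ell,
\]
because every particle stopped at level $\z_y$ other than $\z$ contributes exactly $-1/N$, and there are exactly $T_1-N(\z_y-1)=N(\ell+1)$ of them.

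This is where your write--up has a genuine gap. You replace $H_\z$ by a Green's function $H_{z^*}$ killed on a box $\partial\cD$ and assert that ``a short geometric argument shows the left-hand side is smaller than $T^*\bar H$ by a deterministic amount $\asymp\ell$.'' Nothing in your sketch supports this. With the killed Green's function there is no level on which $H_{z^*}$ vanishes except at $z^*$, so the clean ``filled rectangle $\Rightarrow$ martingale $=0$, excess particles $\Rightarrow$ $-1/N$ each'' accounting disappears; the sum $\sum_j H_{z^*}(Z_j)-T^*\bar H$ now depends on the detailed distribution of where the $Nm$ missing sites sit inside the slab (an $Nm$--dimensional degree of freedom), not just on the single bit ``$z^*$ unoccupied.'' One also needs to correct for the pole term (for a Green's function, $\bbE[H_{z^*}(Z_j)\mid\cF_{j-1}]=\bar H-c\cdot\bbE[\text{visits to }z^*\text{ strictly before settling}]$, which is $\le\bar H$, not $\ge\bar H$; and when $z^*\notin A(j-1)$ this term vanishes identically, so it carries no signal at all) and for the boundary of $\cD$. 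Without an explicit computation showing a deterministic deficit of order $\ell$ on $L(z^*)\cap\cE_m[T]^c$, the argument does not close; and if you carry that computation out, you will find yourself essentially forced to the hitting--probability kernel $H_\z$ (the normal derivative of your Green's function at level $\z_y$), which is precisely what the paper uses. The quadratic--variation bound $\langle M\rangle_{T^*}=\cO(m\,\mathrm{polylog}\,N)$, on the other hand, is in the right regime and is proved in the paper (Lemmas~\ref{le1} and~\ref{le2}) via exponential moment bounds on $S_\z$; your heuristic there is sound.
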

The above result tells us that on the event that there is no $m$-early point, there is no $\ell $-late point with high probability, with $\ell = \big\lceil \sqrt{Cm\log N} \big\rceil \ll m $.

To prove this, we argue as follows. For $\z \in \bbZ_N \times \bbZ$, let $L(\z ) = \{ \z \mbox{ is } \ell\mbox{-late}\}$. Then, since  
	\[ \cL_\ell [T] = \bigcup_{\z \in R_{\frac{T}{N}-\ell } } L( \z ) , \]
we have 
	\[ \bbP  ( \cL_\ell [T] \cap \cE_m [T]^c ) \leq \sum_{\z \in R_{\frac{T}{N}-\ell }} 
	\bbP ( L(\z ) \cap \cE_m [T]^c ) . \]
It therefore suffices to show that 
	\[ \bbP ( L(\z ) \cap \cE_m [T]^c ) \leq N^{-(\g +4 )} \] 
for arbitrary $\z \in R_{\frac{T}{N}-\ell}$. 
To see this, we use a discrete harmonic function to build a martingale with pole at $\z$. We then show that on the event $L(\z )$ this martingale is large and negative, while on the event $\cE_m[T]^c $ its quadratic variation is small. This will then imply that the probability that both $L(\z ) $ and $\cE_m[T]^c$ hold is small. 

Recall that  $(A(t))_{t\geq 0}$ denotes the IDLA process.  For $\z = (\z_x , \z_y ) $ and $z \in \bbZ_N \times \bbZ_+$ define
	\[ H_\z (z) := \bbP_z (\mbox{a SRW reaches level }\z_y\mbox{ for the first time at }\z ) , \]
where SRW stands for simple random walk on $\bbZ_N \times \bbZ$. 
Then $H_\z (z) \in [0,1]$ for all $z$ and  $H_\z (z) \to 1/N$ as $z_y \to -\infty$. Moreover,   $H_\z $ is discrete harmonic up to level $\z_y $  (in fact, it is the discrete harmonic extension of the function $\mathbf{1} (x=\z_x )$ at level $\z_y$ to the region $\{ (x,y) : y\leq \z_y \}$). 

We embed the driving random walks in continuous time by mean of time-changed Brownian motions on the lattice (see \cite{jerison2012logarithmic} for a precise definition), that we denote by $\{ (B_n(t))_{t\in [0,1]} , n\geq 1\}$, so that $B_n(0)$ and $B_n(1)$ are the starting and settling location of the $n^{th}$ random walk respectively. This turns out to be technically convenient, since it makes our discrete time martingales into continuous time ones. Finally, 
for real $t\in [0,\infty )$ we define
	\[ M_\z (t) := \sum_{n=1}^{\lfloor t \rfloor } \Big( H_\z ( B_n(1) ) - \frac{1}{N} \Big) 
	+ \Big( H_\z (B_{\lfloor t \rfloor } ( t- \lfloor t \rfloor ) ) - \frac{1}{N} \Big) . \]
Note that the first sum represents the contribution of the settled walkers, while the last term gives the contribution of the active one at time $t$. Since the function $H_\z $ is discrete harmonic up to level $\z_y$, $M_\z$ above is a continuous-time martingale up to the first time the cluster reaches level $\z_y $. 
We split it into a continuous part and a jump part, namely we set $M_\z (t) = M^1_\z  (t) + M^2_\z (t) $ with 
	\[ M^1_\z  (t) = \sum_{n=1}^{\lfloor t \rfloor -1 } \Big( H_\z (B_n(1)) - H_\z (B_n(0)) \Big) 
	+  H_\z (B_{\lfloor t \rfloor } ( t- \lfloor t \rfloor ) ) - B_{\lfloor t \rfloor }(0) , \]
and 
	\[ M^2_\z (t) = \sum_{n=1}^{\lfloor t \rfloor } \Big( H_\z (B_n(0)) - \frac{1}{N} \Big) . \]
Let $S_\z $, $S_\z^1 $ and  $S_\z^2$ denote the quadratic variation of $M_\z$, $M_\z^1$ and $M_\z^2$ respectively. Since  $M_\z^1$ is a continuous martingale starting from $0$, it is  a time-changed Brownian Motion, that is 
	\[ M_\z^1 (t) = B (S_\z^1 (t)) \]
for $B$ one-dimensional Brownian motion. Moreover, since $M_\z^2$ is piecewise constant and by independence of the starting locations, we have 
	\[ S_\z^2 (t) = \sum_{n=1}^{\lfloor t \rfloor } \bbE \Big[ \Big( H_\z (B_n(0)) - \frac{1}{N} \Big)^2\Big] . \]
Recall that we want to bound the quadratic variation of $M_\z$ on the event that no point is $m$-early, which means that the cluster has a controlled height. 
The next lemma shows how a control on the shape of the cluster can be translated into a control of the quadratic variation of the martingale. 

\begin{Lemma} \label{le1}
Assume that $m\gg 1 $ and that $\z_y \geq \frac{t}{N} + 2m +1$. Then 
	\[ \bbE \Big( e^{S_\z (t) }\mathbf{1}_{\cE_{m+1} [t]^c } \Big) \leq e^2 t^{800} . \]
\end{Lemma}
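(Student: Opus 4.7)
The plan is to establish the deterministic bound $S_\z(t) \leq 800\log t + 2$ on the event $\cE_{m+1}[t]^c$, from which the conclusion $\bbE(e^{S_\z(t)}\mathbf{1}_{\cE_{m+1}[t]^c}) \leq e^2 t^{800}$ follows immediately. The proof would split the quadratic variation as $S_\z = S_\z^1 + S_\z^2$ following the decomposition $M_\z = M_\z^1 + M_\z^2$ introduced in the text, and bound each piece separately using Fourier analysis on the cylinder (the jump part in fact by an absolute constant, and the continuous part by $O(\log t)$).

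For the jump part $S_\z^2$, I would first note that $S_\z^2(t) = \lfloor t \rfloor \cdot \var(H_\z(B_1(0)))$ is deterministic, since the $B_n(0)$'s are i.i.d.\ uniform on level $0$. The variance would be computed by Parseval after expanding $H_\z$ in the Fourier basis of eigenfunctions of the cylinder Laplacian: for $z_y < \z_y$,
$$H_\z(z_x, z_y) = \frac{1}{N} + \frac{1}{N}\sum_{k=1}^{N-1}\cos(2\pi k(\z_x - z_x)/N)\, e^{-q_N^{(k)}(\z_y - z_y)/N},$$
where $q_N^{(k)}/N = \cosh^{-1}(2 - \cos(2\pi k/N))$ generalizes \eqref{qN}. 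This yields $\var(H_\z(B_1(0))) = N^{-2}\sum_{k=1}^{N-1}e^{-2 q_N^{(k)} \z_y/N}$. Since $q_N^{(k)} \asymp \min(k, N)$, an elementary computation exploiting $\z_y \geq t/N$ shows $S_\z^2(t) \leq C$ for an absolute constant, independently of the good event.

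For the continuous part $S_\z^1$, I would derive the gradient estimate $\sum_{z_x}[(H_\z(z_x, y+1) - H_\z(z_x, y))^2 + (H_\z(z_x+1, y) - H_\z(z_x, y))^2] \leq C/(|y - \z_y|+1)^3$ for $|y - \z_y| \leq N$ (with exponential decay beyond), again by differentiating the above Fourier expansion and applying Parseval. Then I would write $S_\z^1(t) \leq \sum_{\{z,z'\}} L_{\{z,z'\}}(t)(H_\z(z)-H_\z(z'))^2$, where $L_{\{z,z'\}}(t)$ counts the total number of traversals of the edge $\{z,z'\}$ by all walker paths $B_1, \ldots, B_{\lfloor t\rfloor}$. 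On $\cE_{m+1}[t]^c$ every walker settles at height at most $\z_y - m$, so standard SRW Green's-function estimates on the cylinder bound the expected traversal count of edges at level $y$ per walker by an explicit function that stays integrable against the cubic decay $1/(|y-\z_y|+1)^3$. Summing, one concludes $S_\z^1(t) \leq C \log t$ on the good event, yielding the claim.

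The main obstacle will be controlling the edge-traversal counts $L_{\{z,z'\}}(t)$ for IDLA walkers, which are not independent because each walker is stopped at the first unoccupied site left by its predecessors. I expect to handle this via the Abelian property (cf.\ Section~\ref{sec:Abelian}), comparing with unstopped random walks killed on leaving a suitable slab, and exploiting the cluster-shape constraint imposed by $\cE_{m+1}[t]^c$ to ensure that all walker trajectories remain at vertical distance at least $m$ from the pole $\z$ with high probability.
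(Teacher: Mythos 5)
Your plan hinges on a deterministic bound $S_\z(t) \leq 800\log t + 2$ holding on the event $\cE_{m+1}[t]^c$, but this is false for the continuous part $S_\z^1(t)$, and the rest of the argument does not close this gap. The quadratic variation of the continuous martingale $M_\z^1$ depends on the actual random walk \emph{paths}, not merely on the shape of the cluster that the good event controls. A single walker can in principle recross an edge near level $\z_y - m$ many times before settling, accruing arbitrarily large quadratic variation while still producing a legal cluster. Indeed your own reasoning reveals the gap: you pass from ``standard SRW Green's-function estimates \ldots bound the \emph{expected} traversal count'' to ``Summing, one concludes $S_\z^1(t) \leq C\log t$ on the good event.'' A bound on $\bbE[L_{\{z,z'\}}(t)]$ only gives a bound on $\bbE[S_\z^1(t)]$; it cannot yield a pathwise bound, and by Jensen it certainly cannot yield the needed bound on $\bbE[e^{S_\z^1(t)}\mathbf{1}_{\cE_{m+1}[t]^c}]$.

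The paper takes a fundamentally different route for $S_\z^1$, precisely to deal with the fact that $S_\z^1(t)$ is random on the good event. It observes that on $\cE_{m+1}[t]^c$ the \emph{increments} of $M_\z^1$ are deterministically bounded: $M_\z^1(s)-M_\z^1(\lfloor s\rfloor)\in[-a,b_n]$ with $a = 2/N$ and $b_n \approx 10/(\z_y - n/N - m -1)$, since the walker cannot rise above level $n/N + m+1$. Because $M_\z^1$ is a continuous martingale (hence a time-changed Brownian motion), the quadratic variation accrued between integer times is stochastically dominated by the exit time $\t_n(-a,b_n)$ of a one-dimensional Brownian motion from that interval. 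These exit times have explicit moment generating functions, and multiplying $\bbE(e^{4\t_n(-a,b_n)}) \leq 1 + 40ab_n$ over $n\leq t$ produces the $t^{800}$ bound. This is the essential mechanism that converts the shape constraint into a bound on the exponential moment of a random quantity. Your Fourier-analytic treatment of $S_\z^2$ is reasonable and would likely work in place of the paper's trick of releasing walkers from level $-N^2$ (both yield an absolute-constant bound on $\bbE(e^{4S_\z^2})$), but that alone does not rescue the overall argument. To repair the proposal you would need to replace the pathwise $S_\z^1$ claim with an exponential-moment estimate along the lines of the exit-time argument; the edge-traversal bookkeeping as written does not deliver this.
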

\begin{proof}
We have $S_\z (t) \leq 2( S_\z^1(t) + S_\z^2(t))$, from which 
	\[ \bbE (e^{S_\z (t) }\mathbf{1}_{\cE_{m+1} [t]^c } ) 
	\leq \bbE (e^{2(S_\z^1 (t) + S_\z^2(t) ) \mathbf{1}_{\cE_{m+1} [t]^c } ) } ) 
	\leq \big[ \bbE (e^{4S_\z^1 (t) \mathbf{1}_{\cE_{m+1} [t]^c } } ) 
	\bbE (e^{4S_\z^2 (t) \mathbf{1}_{\cE_{m+1} [t]^c } }) \big]^{1/2}. \]
Let us  estimate the two factors separately. 
As a technical trick, we will assume that the driving random walks are released from a uniformly chosen location at level $- N^2$, so that 
	\begin{equation} \label{trick} 
	 \Big| H_\z (B_n(0)) - \frac{1}{N} \Big| \leq  \frac{1}{N \log N} . 
	 \end{equation} 
This clearly does not change the law of the process, and it has the technical advantage of transferring some amount of the quadratic variation of $M_\z$ from $S_\z^2$ to $S_\z^1$. 
 We have: 
 	\[ \begin{split} 
 	\bbE (e^{4S_\z^2 (t) \mathbf{1}_{\cE_{m+1} [t]^c } }) & \leq 
 	\bbE \Big( \exp \Big\{ 4\sum_{n=1}^{\lfloor t \rfloor } \Big( H_\z(B_n(0)) - \frac{1}{N} \Big)^2 \Big\} \Big) 
 	\\ & \leq \bbE \Big( \exp \Big\{ \frac{4 \lfloor t \rfloor}{(N \log N)^2}  \Big\} \Big) 
 	\leq e^{4} . 
 	\end{split} \]  
Note that the above bound does not depend on $m$, but only on the assumption \eqref{trick}. 
It remains to show that 
	\[ \bbE (e^{4S_\z^1 (t) \mathbf{1}_{\cE_{m+1} [t]^c } } )  \leq t^{1600} . \]
To this end, we use that on the event $\cE_{m+1} [t]^c $ the height on the cluster is controlled, which in turn implies that the quadratic variation of the continuous martingale part cannot be too large. Let us take $t$ to be integer to simplify the writing. Then we find 
	\[ \bbE (e^{4S_\z^1 (t) \mathbf{1}_{\cE_{m+1} [t]^c } } )  
	\leq \bbE \Big( \exp \Big\{ 4\sum_{n=1}^t ( S_\z^1 (n) - S_\z^1 (n-1) ) \mathbf{1}_{\cE_{m+1}[t]^c} \Big\} \Big) . \]
We claim that on the event $\cE_{m+1}[t]^c$ the martingale increments are bounded, that is 
	\[ 	M_\z^1(s) - M_\z^1(\lfloor s \rfloor ) \in [-a , b_n ] \]
for all $s \in [n,n+1) $ and some (non-random) $a,b_n$. Indeed, 
	\[ M_\z^1(s) - M_\z^1(\lfloor s \rfloor )  =
	H_\z ( B_{\lfloor s \rfloor} ( s - \lfloor s \rfloor ) ) - 
	H_\z (B_{\lfloor s \rfloor } (0)) 
	\geq - H_\z (B_{\lfloor s \rfloor } (0))  \geq - \frac{2}{N} =: -a . \]
Moreover, on the event $\cE_{m+1} [t]^c $ we have that for all $n\leq t$ the cluster $A(n)$ is contained in $R_{\frac{n}{N} + m+1}$, from which 
	\[ \begin{split} 
	M_\z^1(s) - M_\z^1(\lfloor s \rfloor ) & =
	H_\z ( B_{\lfloor s \rfloor} ( s - \lfloor s \rfloor ) ) - 
	H_\z (B_{\lfloor s \rfloor } (0))  \\ & 
	\leq \max_{ z : z_y \leq \frac{n}{N} +m+1} ( H_\z (z) - H_\z ( B_n(0)) ) \\ & 
	\leq \frac{10}{\z_y - \frac{n}{N} -m-1 } =: b_n , 
	\end{split} \]
where the last inequality follows from the explicit computation of $H_\z(z)$ for large $N$, and it holds as long as $m\gg 1$ (see \cite{lawler2010random} for details). By a standard argument on Brownian motion,  
this implies that 
\[ S_\z^1(n) - S_\z^1(n-1) \in \t_{n} (-a , b_n) ,  \]
where $\t_n(a,b)$ denotes the exit time from the interval $(-a , b)$ for a one-dimensional Brownian motion starting from $0$. We thus have that 
	\begin{equation} \label{ab} 
	\begin{split} 
	\log \bbE \Big( \exp \Big\{ 4\sum_{n=1}^t ( S_\z^1 (n) - S_\z^1 (n-1) ) \mathbf{1}_{\cE_{m+1}[t]^c} \Big\} \Big) & 
	\leq \log \bbE \Big( \exp \Big\{ 4\sum_{n=1}^t \t_n(-a , b_n) \Big\} \Big)
	\\ & 
	\leq \sum_{n=1}^t \log \bbE \big( e^{4\t_n (-a , b_n)} \big) . 
	\end{split} 
	\end{equation} 
It is easy to check (cf.\ \cite{jerison2012logarithmic}, Lemma 5) that $\bbE (e^{\l \t (-a , b ) }) \leq 1+ 10\l a b \,$ provided $\sqrt{\l} (a+b) \leq 3$. For our choice of parameters $\l =4$ and 
	\[ a+b_n =  \frac{2}{N} + \frac{10}{\z_y - \frac{n}{N} -m-1 } 
	\leq \frac{2}{N} + \frac{10}{\frac{t-n}{N} + m } \leq 1\]
for $N$ large enough, since $m\gg 1$. Thus 
	\[  \begin{split} 
	\sum_{n=1}^t \log \bbE \big( e^{4\t_n (-a , b_n)} \big) 
	& \leq \sum_{n=1}^t \log ( 1+ 40a b_n ) 
	\leq 40 a \sum_{n=1}^t b_n 
	 \leq  400 a \sum_{n=1}^t \frac{1}{\frac{t-n}{N} +1} 
	\\ & \leq \frac{800}{N} \Big( 1 + \int_1^t \frac{dx}{x/N} \Big) 
	= \frac{800}{N} ( 1 + N \log t ) \leq 1600 \log t , 
	\end{split} \]
which concludes the proof.
\end{proof}

We can now show that no early points implies no late points. 
\begin{proof}[Proof of Proposition \ref{pr:early_late}]
Take $m= \left\lfloor \frac{\ell^2}{C\log N } \right\rfloor$ without loss of generality, with $C$ as in the statement, and note that $m\geq \ell \,$ since $\ell \geq C\log N $ by assumption. Let $\z=(\z_x , \z_y )$ be such that $\z_y \leq \frac{T}{N}-\ell$, and set $T_1 = N(\z_y + \ell )$. Note that $T_1 \leq T$. Recall that $L(\z )$ denotes the event that $\z$ is $\ell$-late, that is $\z \notin A(T_1)$. 
As already observed, it will suffice to show that 
	\[ 	 \bbP ( L(\z ) \cap \cE_m [T]^c ) \leq N^{-(\g +4 )} \] 
for arbitrary $\z \in R_{\frac{T}{N}-\ell}$. On the event $L(\z )$ we use the martingale $M_\z$ with pole at $\z$. We will show that both $M_\z (T_1)$ is large and negative, and $S_\z (T_1)$ is small, so that the probability of both happening simultaneously is tiny. 

Let $A_\z (t)$ denote the IDLA cluster at time $t$ with points stopped upon reaching level $\z_y$, counted with multiplicity. We have 
	\[ M_\z (T_1) = \sum_{A_\z (T_1) } \Big( H_\z (z) - \frac{1}{N} \Big) . \]
On the event $L(\z )$, no particle reaches $\z$ by time $T_1$. Moreover, $H_\z (z) =0$ for all $z \neq \z $ at level $\z_y$, while $H_\z (z)>0$ if $z_y < \z_y$. It follows that the sum defining $M_\z (T_1)$ is maximised when as many particles as possible are below level $\z_y$, that is when the cluster is completely filled up to level $\z_y -1$. Hence 
	\[ \begin{split} 
	M_\z (T_1) & \leq \underbrace{\sum_{z \in R_{\z_y -1}} \Big( H_\z (z) - \frac{1}{N} \Big) }_{0 \mbox{ (mean value pr.)}}
	+ \sum_{z \in A_\z(T_1) , \, z_y = \z_y } \Big( H_\z (z) - \frac{1}{N} \Big) 
	\\ & = -\frac{1}{N} \underbrace{\sharp \{ \mbox{particles stopped at level }\z_y \} }_{T_1 - N(\z_y -1 ) = N(\ell +1 ) } 
	\leq -\ell . 
	\end{split} \]
To show that, on the other hand, the quadratic variation is small, we use the following lemma. 
\begin{Lemma} \label{le2}
Assume that $m\gg 1$ and $\ell \leq m$. Fix any $\z$, and let $t = N(\z_y + \ell ) $. Then 
	\[ \bbE ( e^{S_\z (t) } \mathbf{1}_{\cE_m[t]^c} ) \leq e^{200m} t^{800} . \]
\end{Lemma}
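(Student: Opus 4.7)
The plan is to follow the martingale bookkeeping from the proof of Lemma~\ref{le1} as closely as possible, but to split the time range $[1,t]$ at the latest moment when the cluster is guaranteed to stay strictly below the pole level $\z_y$. Set $T_0 := N(\z_y - 2m - 1)$; on the event $\cE_m[t]^c$ the inclusion $A(n) \subseteq R_{n/N + m}$ forces $A(n) \subseteq R_{\z_y - m - 1}$ for every $n \leq T_0$, while the assumption $\ell \leq m$ bounds the remaining range by $t - T_0 = N(\ell + 2m + 1) \leq 3Nm$. The decomposition $S_\z \leq 2(S_\z^1 + S_\z^2)$ is preserved, and the trick of releasing the driving walkers from a uniform vertex at level $-N^2$ yields $\bbE(e^{4 S_\z^2(t)}) \leq e^4$ exactly as in Lemma~\ref{le1}, so after Cauchy--Schwarz the task reduces to controlling $\bbE(e^{4 S_\z^1(t)} \mathbf{1}_{\cE_m[t]^c})$.

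On the safe range $n \leq T_0$ the harmonic-measure estimate from Lemma~\ref{le1} applies without modification: the increment bound reads $b_n \leq 10/(\z_y - n/N - m - 1) \leq 10/m \leq 1$ (using $m \gg 1$), the constraint $\sqrt{4}(a + b_n) \leq 3$ holds for $N$ large, and the same telescoping integral contributes at most $O(\log t)$ to the log moment generating function, producing a polynomial-in-$t$ factor.

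The crux is the bad range $n \in (T_0, t]$, where the distance-based estimate on $b_n$ degenerates because the walker can now come arbitrarily close to $\z$. My plan is to replace it by the universal bound $b_n \leq 1$ that follows from $H_\z \in [0,1]$, while keeping $a = 2/N$. For $N$ large the constraint $\sqrt{4}(a + b_n) \leq 3$ still holds comfortably, so the MGF estimate from Lemma~\ref{le1} yields $\log \bbE(e^{4 \tau_n(-a, b_n)}) \leq 40 a b_n = 80/N$ per step; multiplying by the at most $3Nm$ steps in this range produces an extra additive contribution of $O(m)$ to the log MGF. Combining the two ranges and plugging back through Cauchy--Schwarz with the $e^4$ bound from $S_\z^2$ delivers the claimed $e^{200m} t^{800}$ estimate for $m$ large.

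The hard part is checking that the crude universal bound $b_n \leq 1$ is tight enough in the bad range; this works precisely because $a = 2/N$ is tiny and, by the assumption $\ell \leq m$, the bad range has length only $O(Nm)$, so the product $a \cdot b_n$ summed over the range stays at $O(m)$ instead of $O(Nm)$. That is exactly what makes the extra factor exponential in $m$ rather than in $Nm$, and it is also the only place where the hypothesis $\ell \leq m$ enters the argument.
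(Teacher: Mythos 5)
Your argument matches the paper's proof of Lemma~\ref{le2} in every essential respect: the same split at $T_0 = N(\z_y - 2m - 1)$, the same sharp $b_n$ estimate in the safe range where the cluster stays well below the pole, the same crude $b_n \leq 1$ estimate in the bad range, and the same use of $\ell \leq m$ to limit the bad range to $O(Nm)$ steps so that the per-step $O(1/N)$ log-MGF cost sums to $O(m)$. The only cosmetic difference is that the paper performs a second Cauchy--Schwarz at $T_0$ and then runs the Lemma~\ref{le1} machinery at exponent $\lambda = 8$ on each piece separately, while you sum the per-step log-MGF bounds directly across the full range at $\lambda = 4$; both routes are valid (and your $t - T_0 \leq 3Nm$ should really be $\leq N(3m+1) \leq 4Nm$, which only shifts the constant, not the exponential-in-$m$ behaviour).
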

Using the above with $t=T_1$ and since $T_1 \leq e^m$ we gather that
	\[ \bbE ( e^{S_\z (T_1) } \mathbf{1}_{\cE_m[T_1]^c} ) \leq e^{200m} T_1^{800} \leq e^{1000m} . \]
In conclusion, for any $s>0$ we have 
	\[ \bbP (L(\z ) \cap \cE_m[T]^c ) \leq 
	\bbP ( \cE_m[T]^c \cap \{ S_\z (T_1) > s \} ) + 
	\bbP ( \{ S_\z (T_1) \leq  s \} \cap L(\z )  ) , \]
with 
	\[  \bbP ( \cE_m[T]^c \cap \{ S_\z (T_1) > s \} ) 
	\leq \bbP (S_\z (T_1) \mathbf{1}_{\cE_m[T_1]^c} > s ) 
	\leq \bbE ( e^{S_\z (T_1) } \mathbf{1}_{\cE_m[T_1]^c} ) e^{-s} 
	\leq e^{-1000m} \]
taking $s=2000m$, and 
	\[ \bbP ( \{ S_\z (T_1) \leq  s \} \cap L(\z )  ) 
	\leq \bbP (  S_\z (T_1) \leq  s , \, M_\z (T_1) \leq - \ell   ) \leq e^{-\ell^2 / 2s } \]
by a standard large deviations estimate for Brownian motion. In all, 
	\[ \bbP ( L(\z ) \cap \cE_m[T]^c ) \leq 
	e^{-1000m } + e^{- \ell^2 / 4000m} \leq 2 N^{-(\g+5)} , \]
as long as $m\geq \ell \geq (\g +5 ) \log N $ and $\frac{\ell^2}{4000m} \geq (\g +5 ) \log N $. This shows that Proposition~\ref{pr:early_late} holds with $C \geq 4000 (\g +5 )$. 

In order to conclude, it remains to prove the above lemma. 
\begin{proof}[Proof of Lemma \ref{le2}]
As in the proof of Lemma \ref{le1}, we have 
	\[  \bbE (e^{S_\z (t) }\mathbf{1}_{\cE_{m} [t]^c } ) 
	\leq \bbE (e^{2(S_\z^1 (t) + S_\z^2(t) ) \mathbf{1}_{\cE_{m} [t]^c } ) } ) 
	\leq \big[ \bbE (e^{4S_\z^1 (t) \mathbf{1}_{\cE_{m} [t]^c } } ) 
	\bbE (e^{4S_\z^2 (t) \mathbf{1}_{\cE_{m} [t]^c } }) \big]^{1/2}. \]
The factor involving $S_\z^2$ is bounded above by an absolute constant, so it suffices to show that, say, 
	\[ 	\bbE (e^{4S_\z^1 (t) \mathbf{1}_{\cE_{m+1} [t]^c } } ) 
	\leq e^{320m} t^{1600} . \]
	Note that $\z_y = \frac{t}{N} -\ell$, so we cannot use Lemma \ref{le1}, which requires $\z_y \geq \frac{t}{N} + 2m+1$. On the other hand, note that if $t_0 \leq t$ then $\cE_m[t]^c \subseteq \cE_m[t_0]^c$. Moreover, for $t_0$ such that $\z_y \geq \frac{t_0}{N} + 2m+1$ we can apply Lemma \ref{le1}. Take therefore $t_0 = N(\z_y -2m-1 ) \vee 0 $. Then 
		\[ \begin{split} 
			\bbE (e^{4S_\z^1 (t) \mathbf{1}_{\cE_{m+1} [t]^c } } ) & 
			\leq \Big[ \bbE (e^{8 ( S_\z^1 (t) - S_\z^1(t_0))  \mathbf{1}_{\cE_{m+1} [t]^c } } ) 
				\underbrace{\bbE (e^{8S_\z^1 (t_0) \mathbf{1}_{\cE_{m+1} [t]^c } } ) }_{\leq t^{3200}} \Big]^{1/2} 
			\\ & \leq \Big[ \bbE \Big( \exp \Big\{ 8\sum_{n=t_0+1}^t ( S_\z^1(n) - S_\z^1(n-1)) \mathbf{1}_{\cE_m[t_0]^c } \Big\} \Big) \Big]^{1/2}  t^{1600} 
			\\ & \leq \Big[ \underbrace{\bbE \big( e^{8 \t ( -\frac{2}{N} , 1 ) } \big)}_{\leq 1 + \frac{160}{N} \leq e^{160/N} } \Big]^{\frac{t-t_0}{2}} t^{1600} 
			\leq e^{\frac{80}{N}(t-t_0) } t^{1600} . 
			\end{split} \]
Now, if $t_0=0$ then $\z_y \leq 2m +1$, so $\frac{t}{N} = \z_y + \ell \leq 2m +1+\ell \leq 4m $. Otherwise, $\frac{t-t_0}{N} = \ell + 2m + 2 \leq 4m$. In all, 
	\[ \bbE (e^{4S_\z^1 (t) \mathbf{1}_{\cE_{m+1} [t]^c } } ) 
	\leq e^{320m} t^{1600}, \]
as wanted. 
\end{proof} 
This concludes the proof of Proposition \ref{pr:early_late}. 
\end{proof}

\subsection{No late points implies no early points} 
We prove the following. 
\begin{Proposition} \label{pr:late_early} 
Assume $m,\ell \leq \sqrt{3C N \log N }$, with $C$ as in Proposition \ref{pr:early_late}. Then 
for any $\g >0$, there exist a finite constant $C'=C'(\g) $, depending only on $\g$,  and an absolute constant $b>0$ such that, if $m \geq C' \log N $ and $\ell \leq  \frac{bm}{48\pi} $, it holds 
	\[ \bbP ( \cE_m [T] \cap \cL_\ell [T]^c ) \leq N^{-(\g +1)} \]
for $N$ large enough. 
\end{Proposition}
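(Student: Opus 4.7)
The plan is to mirror the proof of Proposition~\ref{pr:early_late}, interchanging the roles of early and late. A union bound over the polynomially many potential $m$-early sites $\z=(\z_x,\z_y)$ with $0\leq \z_y\leq T/N$ reduces the task to bounding
\[
\bbP\bigl(\{\z\in A(N(\z_y-m))\}\cap\cL_\ell[T]^c\bigr)
\]
for each fixed $\z$ by a sufficiently small power of $N$. Fix such a $\z$, set $t_0=N(\z_y-m)$, and reuse the very same martingale $M_\z(t)=\sum_{n\leq \lfloor t\rfloor}(H_\z(B_n(1))-1/N)$ with its continuous--jump decomposition $M_\z=M_\z^1+M_\z^2$ and quadratic variation $S_\z$. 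One checks that $H_\z$ is discrete harmonic on all of $\bbZ_N\times\bbZ$ except on level $\z_y$ (where it takes boundary values $\mathds{1}_{z=\z}$), so that $M_\z$ is a genuine martingale at all times, not just up to the first visit of the cluster to level $\z_y$.

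Let $\tau=\min\{t\geq 0:\z\in A(t)\}$, so that $\tau\leq t_0$ on the $m$-early event. The proof will rest on two one-sided estimates. First, on $\{\tau\leq t_0\}\cap\cL_\ell[T]^c$ I will show
\[
M_\z(\tau)\geq c\, m
\]
for some absolute constant $c>0$, the reverse-sign analogue of the bound $M_\z(T_1)\leq -\ell$ established in the late case. Second, on $\cL_\ell[T]^c$ I will show a moment-generating bound
\[
\bbE\bigl(e^{S_\z(\tau)}\,\mathds{1}_{\cL_\ell[T]^c}\bigr)\leq e^{\kappa \ell}\,t_0^{800}
\]
for some absolute constant $\kappa$, obtained by copying the proof of Lemma~\ref{le2} nearly verbatim: the no-late assumption provides the density needed to bound each martingale jump by $b_n\leq C/(\z_y-n/N+\ell)$ up to time $\tau$, which telescopes into the desired exponential control on $S_\z$. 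Combining the two via the exponential-Markov manipulation at the end of Proposition~\ref{pr:early_late} produces a bound of order $\exp(-c'm^2/\ell)$, which under the hypotheses $\ell\leq bm/(48\pi)$ (with $b$ small enough in terms of $c,\kappa$) and $m\geq C'(\g)\log N$ is $N^{-(\g+O(1))}$. The outer union bound over $\z$ then absorbs the polynomial prefactor and produces the stated inequality.

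The lower bound $M_\z(\tau)\geq c m$ is where the substance lies, and I expect it to be the delicate technical heart of the argument. Since $\sum_{z:z_y=k}H_\z(z)=1$ at every level $k$, every completely filled row contributes $0$ to the sum defining $M_\z$, so on $\{\tau\leq t_0\}\cap\cL_\ell[T]^c$ one has
\[
M_\z(\tau)=\sum_{z\,\in\, A(\tau)\setminus R_{\lfloor \tau/N\rfloor-\ell}}(H_\z(z)-1/N).
\]
This sum has exactly $N\ell$ terms, lying in levels strictly above $\z_y-m-\ell$, and one of them is $\z$ itself, contributing $1-1/N$. The main obstacle is to rule out configurations in which the remaining $N\ell-1$ excess particles sit in columns far from $\z_x$, so that $H_\z(z)-1/N\approx -1/N$ for each of them, yielding $M_\z(\tau)\approx -\ell$ rather than the required $\gtrsim m$. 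The geometric constraint that a connected filled path from $R_0$ to $\z$ must pass through the column $\{x=\z_x\}$ (forcing this column to be filled up to level $\z_y-1$ just before time $\tau$) does populate a substantial neighbourhood of $\z$ well above $\z_y-m-\ell$, but translating this into a quantitative linear-in-$m$ lower bound will likely require either a careful extremal analysis of the admissible cluster shapes, or else replacing $H_\z$ by a smoothed test function such as $\sum_{|\z'-\z|\leq r}H_{\z'}$ which is robust under local rearrangements of the excess mass and captures the vertical displacement of the whole local profile rather than the single-site excess.
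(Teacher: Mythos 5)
Your framework---union bound over early sites, a martingale $M_\z$ with harmonic pole, splitting into a quadratic-variation bound and a deterministic lower bound on $M_\z$---is the right one, and your control of $S_\z$ via the no-late event is essentially what the paper does. But the step you flag as "the delicate technical heart" is indeed a genuine gap, and the way you set up the pole makes it harder than it needs to be.

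You place the pole at the early point $z$ itself and work with $M_z(\tau)$, observing that $z$'s own term contributes $1-1/N$ and that the remaining $N\ell-1$ excess particles might each contribute $\approx -1/N$, giving $M_z(\tau)\approx -\ell$; you then look for a reason the excess mass must concentrate near $z$. The "connected filled path through the column $\{x=z_x\}$" heuristic you offer does not actually hold (the aggregate need not fill that column), and more fundamentally even a genuine connected path to $z$ could be a one-site-wide tentacle carrying only $O(m)$ mass, which would not give the linear-in-$m$ bound with the right constant. The paper's two decisive ingredients, which your proposal is missing, are: (i) the pole is placed not at $z$ but at $\z=(z_x, z_y+m+1)$, a distance $m+1$ \emph{above} the early point, so that $H_\z(z')\gtrsim 1/m$ \emph{uniformly} for all $z'\in B_m(z)$ rather than only for $z'$ extremely close to $z$; and (ii) the \emph{thin tentacles lemma} (Lemma 2 of Jerison--Levine--Sheffield), which says that with probability $1-C_0e^{-c_0m}$ a cluster containing $z$ contains at least $bm^2$ sites of $B_m(z)$. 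Combining these two, the $B_m(z)$ contribution to $M_\z$ is at least $bm^2\cdot\frac{1}{12\pi m}=\frac{bm}{12\pi}$, the filled-rows contribution is $0$ on $\cL_\ell[T]^c$, and the remaining contribution is $\geq-\ell$, giving $M_\z\geq bm/48\pi$. Your proposed fix of smoothing $H_\z$ over a neighbourhood of $\z$ is a step in this direction (it has the same flavour of making the lower bound robust to local rearrangements), but without a thin-tentacles-type input there is no reason the aggregate contains a positive density of sites in that neighbourhood, so the smoothing alone does not close the gap. In short: correct scaffolding, missing the geometric lemma that the whole argument hinges on.
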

\begin{proof}
We assume $\ell = \frac{bm}{48\pi} $ without loss of generality, with $b$ to be determined. 
Let 
	\[ Q_{z,t } := \{ z \mbox{ is the first }m\mbox{-early point, absorbed at time }t\} . \]
Then 
	\[ \bbP ( \cE_m [T] \cap \cL_\ell [T]^c )  = \sum_{t=1}^{T} \sum_{z\in R_{T}} 
	\bbP ( Q_{z,t} \cap \cL_\ell [T]^c ) . \]
On the event $Q_{z,t} $ take $\z = (\z_x , \z_y )$ such that $\z_y = z_x$ and $\z_y = z_y + m+1 = \frac{t}{N} + 2m +1 $. Then for any $s>0$ 
	\[ \begin{split} 
	\bbP ( Q_{z,t} \cap \cL_\ell [T]^c )  \leq & 
	\bbP ( Q_{z,t} \cap \{ S_\z (t) > s \} ) + \\ & 
	 \bbP \Big( Q_{z,t} \cap \big\{ M_\z (t) < \frac{b}{1000} m \big\} \cap  \cL_\ell [T]^c \Big) + \\
	&   \bbP \Big( \{ S_\z (t) > s \} \cap \big\{ M_\z (t) \geq  \frac{b}{1000} m \big\} \Big)  .
	\end{split} \]
Taking $s = (2\g + 1000 ) \log N$, we find 
	\[ \bbP \Big( \{ S_\z (t) > s \} \cap \big\{ M_\z (t) \geq  \frac{b}{1000} m \big\} \Big) 
	\leq e^{-s/2} \leq N^{- (\g +500 )} , \]
and, using that $Q_{z,t} \subseteq \cE_{m+1}[t]^c$, 
	\[ \begin{split} 
	\bbP ( Q_{z,t} \cap \{ S_\z (t) > s \} ) & \leq 
	\bbP ( \cE_{m+1}[t]^c \cap \{ S_\z (t) > s \} )  
	= \bbP ( S_\z (t) \mathbf{1}_{\cE_{m+1}[t]^c} > s ) 
	\\ & \leq \bbE ( e^{S_\z (t) } \mathbf{1}_{\cE_{m+1}[t]^c} ) e^{-s} 
	\leq e^8 t^{800 } e^{-s } \leq N^{-(2\g + 100) }. 
	\end{split} \]
To finish, we show that 
	\[ \bbP \Big( Q_{z,t} \cap \big\{ M_\z (t) < \frac{b}{1000} m \big\} \cap  \cL_\ell [T]^c \Big) \leq N^{-(\g +5 )} . \]
Note that on the event $Q_{z,t}$ we have $A(t) \subseteq R_{\frac{t}{N} + m+1} $. Let $B_r(z)$ denote the Euclidean ball of radius $r$ around $z$. Partition $A(t)$ as follows: 
	\[ A(t) = \Big( \underbrace{ A(t) \cap R_{\frac{t}{N} -\ell } }_{A_1} \Big) \cup 
	\Big( \underbrace{ A(t) \cap B_m(z) }_{A_2} \Big) \cup \Big( \underbrace{A(t) \setminus ( A_1 \cup A_2 ) }_{A_3} \Big) . \]
Then 
	\[ M_\z (t) = \sum_{z\in A_1 } \Big( H_\z (z) -\frac{1}{N} \Big) + 
	\sum_{z\in A_2 } \Big( H_\z (z) -\frac{1}{N} \Big) + 
	\sum_{z\in A_3 } \Big( H_\z (z) -\frac{1}{N} \Big) . \]
Since $A_1 = R_{\frac{t}{N} - \ell } $ on the event $\cL_\ell [T]^c $, 
	\[ \sum_{z\in A_1 } \Big( H_\z (z) -\frac{1}{N} \Big) =0 . \]
Moreover, 
	\[ \sum_{z\in A_3 } \Big( H_\z (z) -\frac{1}{N} \Big) \geq -\frac{|A_3|}{N} = 
	- \frac{t-(t-\ell N ) - |A_2|}{N}  \geq - \ell . \]
It remains to estimate the contribution to the martingale of points in $A_2$, which we show to be large. To this end, observe that if $i\leq m$ and $m\leq k \leq 2m+1$, then (with $m\gg 1$) 
	\[ \begin{split}
	\bbP_0 ( \mbox{a SRW reaches level }k\mbox{ for the first time at }(i,k) ) 
	& \geq \frac{1}{4\pi} \log \Big( 1+\frac{4k}{i^2 + (k-1)^2} \Big) + \cO \Big( 
	\frac{1}{N^2}\Big) \\ & 
	\geq \frac{1}{2\pi} \frac{k}{i^2 + (k-1)^2 } \geq \frac{1}{12\pi m } . 
	\end{split} \]
This implies that $H_\z (z) \geq \frac{1}{12\pi m }$  for $z\in A_2$. Moreover, the next lemma shows that $A_2$ contains a positive proportion of points, and it identifies the absolute constant $b$ in Proposition \ref{pr:late_early}. 
\begin{Lemma} [Thin tentacles, cf.\ \cite{jerison2012logarithmic} Lemma 2]
Assume $m\ll N$. Then there exist absolute constants $b, C_0 , c_0$ such that for all $z $ with $z_y \geq m$ it holds 
	\[ \bbP ( z \in A(t) , \; |A(t) \cap B_m(z)| \leq bm^2 ) \leq C_0 e^{-c_0 m} , \]
for all $t$. 
\end{Lemma}
\begin{proof}
This can be proven exactly as in the $\bbZ^2$ case treated in \cite{jerison2012logarithmic}, as the argument is completely local and $m \ll N$. 
\end{proof}
This allows us to conclude that, on the event $Q_{z,t} \cap \cL_\ell [T]^c \cap \{ |A(t) \cap B_m(z)| \leq bm^2 \}$,  it holds 
	\[ M_\z (t) \geq \sum_{z\in A_2 } \Big( H_\z (z) - \frac{1}{N} \Big) - \ell 
	\geq \Big( \frac{1}{12\pi m} - \frac{1}{N} \Big) |A_2| - \ell 
	\geq \frac{bm}{12\pi } - \frac{bm^2}{N} - \ell 
	\geq \frac{bm}{48\pi}   , \]
where the last inequality follows by recalling that $\ell = b m/48\pi $ and that, by assumption, $m^2 \leq 3C N\log N$ and $m\geq C'\log N$, from which 
	\[ \frac{bm^2}{N} \leq 3bC \log N  \leq \frac{bm}{24\pi} \]
as long as $3C \leq C' / 24\pi$, which can always be ensured by taking $C'$ large enough, depending only on $C$. 

In all, we conclude that 
	\[ \begin{split} 
	 \bbP \Big( Q_{z,t} \cap \big\{ M_\z (t) < & \frac{b}{1000 } m \big\}  \cap  \cL_\ell [T]^c \Big)    = \\ & = 
	\bbP \Big( Q_{z,t} \cap \big\{ M_\z (t) < \frac{b}{1000 } m \big\} \cap  \cL_\ell [T]^c 
	\cap \{ |A(t) \cap B_m(z)| \leq bm^2 \} \Big) \\ & 
	\leq \bbP ( z\in A(t) , \{ |A(t) \cap B_m(z)| \leq bm^2 \} ) \\ & \leq C_0 e^{-c_0 m }  
	\leq N^{-(\g +5 )} , 
	\end{split} \]
where the last inequality holds as long as $m \geq \frac{2}{c_0} (\g +5 ) \log N$, which can always be ensured at cost of increasing $C'$, only depending on $\g$. This concludes the proof of Proposition~\ref{pr:late_early}. 
\end{proof}

\subsection{Iterative scheme}
To finally prove Theorem \ref{th:JLS} we iteratively apply Propositions \ref{pr:early_late} and \ref{pr:late_early}, starting with $m=2T/N \gg \log N $ and iterating as long as $m,\ell \geq a_\g \log N$, with $a_\g =\max \{ C ,  C' \}$. 
Assume that $T\gg N \log N$, otherwise the result holds by Remark \ref{remark_smallT}. 
We have from Lemma \ref{le:a_priori} with $m=3$ that 
	\[ \bbP (A(T) \nsubseteq R_{3T/N} ) \leq  \bbP (A(N^2 \log^2 N ) \nsubseteq R_{3T/N} )
	\leq \b^{N} \leq N^{\g +1} ,\]
where, for all fixed $\g >0$,  the last inequality holds for $N$ large enough. 
Let $m_0 = 3N \log^2 N $. For $C,C'$ as in Proposition \ref{pr:early_late} and \ref{pr:late_early} respectively, iteratively define 
	\[ \begin{cases} 
	\ell_k = \sqrt{ C m_{k-1} \log N } , \\
	m_k = \frac{48 \pi }{b} \ell_k , 
	\end{cases} 
	\quad k\geq 1. \]
Then, as long as $m_k , \ell_k \geq a_\g \log N$, we have 
	\[ \begin{split} 
	& \bbP ( \cE_{m_{k-1}} [T]^c \cap \cL_{\ell_k} [T]^c ) \leq N^{-(\g +1)}  , \\
	& \bbP (  \cL_{\ell_k} [T]^c \cap \cE_{m_k} [T]  ) \leq N^{-(\g +1)} 
	\end{split} \]
by Proposition \ref{pr:early_late} and \ref{pr:late_early} respectively. 
Thus 
	\[ \begin{split} 
	 & \bbP ( \cE_{m_0} [T] ) \leq N^{-(\g+1)} , \\
	 & \bbP ( \cL_{\ell_1} [T] ) \leq \bbP ( \cL_{\ell_1} [T] \cap \cE_{m_0}[T]^c ) 
	 + \bbP ( \cE_{m_0} [T] ) \leq 2N^{-(\g+1)} , \\
	 & \bbP ( \cE_{\m_1} [T] ) \leq \bbP ( \cE_{m_1} [T] \cap \cL_{\ell_1}[T]^c ) 
	 + \bbP ( \cL_{\ell_1} [T] ) \leq 3N^{-(\g+1)} , \\
	 & \quad \vdots \\
	 & \bbP ( \cL_{\ell_k}[T]) \leq 2k N^{-(\g+1)} , \\
	 & \bbP ( \cE_{m_k}[T]) \leq (2k+1)N^{-(\g +1)} . 
	\end{split} \]
We want to iterate as much as possible, that is to take $k$ as large as possible so that $\ell_k , m_k \geq a_\g \log N$. 
It is easy to check that if $k > c_0 \log \log N$, for a suitable absolute constant $c_0$, then $\ell_k < a_\g \log N$, so we can iterate at most $c_0 \log \log N$ times. In all, this shows that by taking $\ell  = \max \{ \ell_{c_0 \log \log N} ; a_\g \log N \} $ and $m = \max \{ m_{c_0 \log \log N} ; a_\g \log N \}$ we have 
	\[ \bbP ( \cE_m[T] \cup \cL_\ell [T] ) \leq \bbP ( \cE_m[T] ) + \bbP ( \cL_\ell [T] ) 
	\leq 5 c_0  N^{-(\g+1)} \log \log N \leq N^{-\g} , \]
for $N$ large enough, as wanted.

\medskip

\bibliography{HLbib2}
\bibliographystyle{plain}

\end{document}